\DeclareMathRadical{\sqrtsign}{symbols}{"70}{largesymbols}{"70}
\newcommand{\bb}{\mathbb}
\newcommand{\integers}{{\bb Z}}
\newcommand{\natls}{{\bb N}}
\newcommand{\ratls}{{\bb Q}}
\newcommand{\reals}{{\bb R}}
\newcommand{\proj}{{\bb P}}
\newlength{\figboxwidth}             
\renewcommand{\bold}[1]{\medskip \noindent {\bf #1 }\nopagebreak}
\newcommand{\cross}{\times}
\newcommand{\st}{\;\: : \;\:}         
\newcommand{\zed}{\integers}
\def\@ifundefined#1#2#3%
\theoremstyle{plain} 
\newtheorem{theorem}{Theorem}[section]
\newtheorem{prop}[theorem]{Proposition}
\newtheorem{proposition}[theorem]{Proposition}
\newtheorem{lemma}[theorem]{Lemma}
\newtheorem{corollary}[theorem]{Corollary}
\theoremstyle{definition} 
\newtheorem{definition}[theorem]{Definition}
\theoremstyle{remark}
\newtheorem{remark}[theorem]{Remark}
\newtheorem*{remark*}{Remark}
\newcommand{\cH}{{\mathcal H}}
\newcommand{\cM}{{\mathcal M}}
\newcommand{\cN}{{\mathcal N}}
\newcommand{\cU}{{\mathcal U}}
\newcommand{\cV}{{\mathcal V}}
\mathchardef\GG="321D
\newcommand{\mc}[1]{{}}  
\newcommand{\mcc}[1]{{}}
\numberwithin{equation}{section}
\newcommand{\noz}{n}
\theoremstyle{plain}
\newtheorem{thm}{Theorem}
\newtheorem{defin}{Definition}
\begin{document}
\title[Birkhoff and Oseledets genericity]{Every flat surface is Birkhoff and Oseledets generic in almost every direction}
\author[J. Chaika]{Jon Chaika}
\author[A. Eskin]{Alex Eskin}
\maketitle
\begin{abstract} We prove that the Birkhoff Pointwise Ergodic Theorem and the Oseledets Multiplicative Ergodic Theorem hold for every flat surface in almost every direction. The proofs rely on the strong law of large numbers, and on recent rigidity results for the action of the upper triangular subgroup of SL(2,R) on the moduli space of flat surfaces. Most of the results also use a theorem about continuity of splittings of the Kontsevich-Zorich cocycle recently proved by S. Filip.
\end{abstract}
\section{Introduction}
\label{sec:intro}

\bold{Flat surfaces and strata.}
Suppose $g \ge 1$, and 
let $\alpha = (\alpha_1,\dots, \alpha_\noz)$ be a partition of $2g-2$,
and 
 $\cH(\alpha)$ be a stratum of Abelian differentials,
i.e. the space of pairs $(M,\omega)$ where $M$ is a Riemann surface
and $\omega$ is a holomorphic $1$-form on $M$ whose zeroes have
multiplicities $\alpha_1 \dots \alpha_\noz$. The form $\omega$ defines a
canonical flat metric on $M$ with conical singularities at the zeros
of $\omega$. Thus we refer to points of $\cH(\alpha)$ as
{\em flat surfaces} or {\em translation surfaces}. For an introduction
to this subject, see the survey \cite{Zorich:survey}. 

\bold{Affine measures and manifolds.} Let
$\cH_1(\alpha) \subset \cH(\alpha)$ denote the subset of surfaces of
(flat) area $1$. 
An affine invariant manifold is
a closed subset of $\cH_1(\alpha)$ which is invariant under the
$SL(2,\reals)$ action and which in \textit{period coordinates} (see
\cite[Chapter ~3]{Zorich:survey}) looks like an affine subspace. Each affine invariant
manifold $\cM$ is the support of an ergodic $SL(2,\reals)$ invariant
probability measure $\nu_\cM$. Locally, in period coordinates, this
measure is (up 
to normalization) the restriction of Lebesgue measure to the subspace
$\cM$,  see \cite{EM} for the precise definitions. 
It is proved in \cite{EMM} that the closure of any
$SL(2,\reals)$ orbit is an affine invariant manifold.

The most importatant case of an affine invariant manifold is a
connected component a stratum $\cH_1(\alpha)$. In this case, the
associated affine measure is called the Masur-Veech or Lebesgue
measure \cite{masur:interval}, \cite{veech:gauss}.

\bold{The Teichm\"uller geodesic flow.} Let 
\begin{displaymath}
g_t = \begin{pmatrix} e^{t} & 0 \\ 0 & e^{-t} \end{pmatrix} \qquad
r_\theta = \begin{pmatrix} \cos \theta & \sin \theta \\ - \sin \theta
  & \cos \theta \end{pmatrix}. 
\end{displaymath}
The element $r_\theta \in SL(2,\reals)$ acts by $(M,\omega) \to
(M,e^{i\theta} \omega)$. This has the effect of rotating the flat
surface by the angle $\theta$. 
The action of $g_t$ is called the \textit{Teichm\"uller geodesic
  flow}.   The orbits of $SL(2,\reals)$ are called
\textit{Teichm\"uller disks}. 

\bold{A variant of the Birkhoff ergodic theorem.}
We use the notation $C_c(X)$ to denote the space of continuous
compactly supported functions on a space $X$. 

One of our main results is the following:
\begin{theorem}
\label{theorem:birkhoff:flow}
Suppose $x \in \cH_1(\alpha)$. Let $\cM = \overline{SL(2,\reals) x}$
be the smallest affine invariant manifold containing $x$. Then, for
any $\phi \in C_c(\cH_1(\alpha))$, for almost all $\theta \in [0,2\pi)$,
we have
\begin{equation}
\label{eq:birkhoff:flow}
\lim_{T \to \infty} \frac{1}{T} \int_0^T \phi(g_t r_\theta x) \, dt =
\int_\cM \phi \, d\nu_\cM, 
\end{equation}
where $\nu_\cM$ is the affine measure whose support is $\cM$. 
\end{theorem}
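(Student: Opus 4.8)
Here is a proof plan.

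\medskip\noindent\emph{Overview and reductions.} The plan is to recode the rotation parameter $\theta$ by a symbolic i.i.d.\ sequence, to recognize $g_tr_\theta x$ along this coding as --- up to summable errors --- a random walk on $\cM$ driven by the upper triangular subgroup $P\subset SL(2,\reals)$, and to run a strong law of large numbers for this walk whose limit is pinned down by the rigidity of $P$--invariant measures of Eskin--Mirzakhani and Eskin--Mirzakhani--Mohammadi \cite{EM,EMM}. We first make two reductions. Since $\nu_\cM$ is a probability measure, replacing $\phi$ by $\psi$ changes each side of \eqref{eq:birkhoff:flow} by at most $\|\phi-\psi\|_\infty$, uniformly in $T$ and $\theta$; hence it suffices to prove \eqref{eq:birkhoff:flow} for $\phi$ ranging over a fixed countable dense subset of $C_c(\cH_1(\alpha))$ and then intersect the corresponding full--measure sets of directions (a standard approximation then recovers all $\phi\in C_c$, again using that $\nu_\cM$ is a probability measure). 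Fix such a $\phi$ and set $\bar\phi=\int_\cM\phi\,d\nu_\cM$. Second, since $\overline{Pr_\theta x}=\overline{SL(2,\reals)r_\theta x}=\cM$ for every $\theta$ (as $r_\theta\in SL(2,\reals)$), the orbit--closure theorem of \cite{EMM} gives that the $P$--orbit of $x$ is dense in $\cM$.

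\medskip\noindent\emph{Recoding directions.} The only geometric ingredient is the identity $g_tr_\delta g_{-t}=\left(\begin{smallmatrix}\cos\delta & e^{2t}\sin\delta\\ -e^{-2t}\sin\delta & \cos\delta\end{smallmatrix}\right)$, which for $|\delta|\le e^{-2t}$ puts $g_tr_\delta g_{-t}$ within $O(e^{-2t})$ of the horocyclic element $h_{e^{2t}\sin\delta}$, where $h_u=\left(\begin{smallmatrix}1&u\\0&1\end{smallmatrix}\right)$. Fix an integer $N\ge2$, put $\tau=\tfrac12\log N$, and let the level--$k$ atoms of a nested sequence of partitions of the circle of directions be obtained by $N$--secting the level--$(k-1)$ atoms; with respect to Lebesgue measure the digit sequence $(a_k)_{k\ge1}$, $a_k\in\{0,\dots,N-1\}$, recording which sub-atom contains $\theta$, is i.i.d.\ uniform. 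Writing $\theta_k$ for the left endpoint of the level--$k$ atom of $\theta$, so that $|\theta-\theta_k|\lesssim e^{-2k\tau}$, the identity above gives $g_{k\tau}r_\theta x=h_{v_k}\,y_k+O(e^{-2k\tau})$, where $y_k:=g_{k\tau}r_{\theta_k}x\in\cM$ is a function of $x$ and $a_1,\dots,a_k$, while $v_k$ is an affine image of the base--$N$ fraction $(0.a_{k+1}a_{k+2}\cdots)_N$, hence uniform and independent of $a_1,\dots,a_k$. The same identity yields $y_{k+1}=g_\tau h_{u_{k+1}}y_k+O(e^{-2k\tau})$, where $u_{k+1}$ depends only on the digit $a_{k+1}$ and is uniform over an $N$--point set. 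Thus $(y_k)_{k\ge0}$, started at $y_0=x$, is --- up to the summable perturbations $O(e^{-2k\tau})$ --- the Markov chain on $\cM$ given by the random walk whose step law $\mu$ is the distribution of $g_\tau h_u$, a measure supported on $P$.

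\medskip\noindent\emph{The strong law of large numbers and the limit.} Granting for the moment that the perturbations contribute nothing in Ces\`aro average, $\frac1T\int_0^T\phi(g_tr_\theta x)\,dt$ is asymptotic, along $T=K\tau$, to $\frac1K\sum_{k<K}\widetilde\Psi(y_k,v_k)$, where $\widetilde\Psi(y,v)=\frac1\tau\int_0^\tau\phi(h_{e^{2s}v}g_sy)\,ds$ is a bounded observable --- a Birkhoff average for the chain $(y_k)$ paired with the fresh uniform variables $(v_k)$. Because $\mu$ is supported on $P$, the $P$--orbit of $x$ is dense in $\cM$, and $\cM$ is closed and $P$--invariant, the rigidity results of \cite{EM,EMM} force the (a.s.\ constant) limit of this ergodic average to equal the $\nu_\cM$--average of the observable, and the strong law of large numbers for the chain supplies the a.s.\ convergence. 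Hence, for Lebesgue--a.e.\ $\theta$,
\begin{displaymath}
\lim_{K\to\infty}\frac1K\sum_{k<K}\widetilde\Psi(y_k,v_k)=\int_\cM\int\widetilde\Psi(y,v)\,d(\mathrm{Unif})(v)\,d\nu_\cM(y)=\int_\cM\phi\,d\nu_\cM=\bar\phi,
\end{displaymath}
the last equality by unfolding $\widetilde\Psi$ and using the $g_s$-- and $h_v$--invariance of $\nu_\cM$. Undoing the two reductions proves \eqref{eq:birkhoff:flow}.

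\medskip\noindent\emph{Main obstacle.} The crux is justifying that the $O(e^{-2k\tau})$ discrepancies between the true orbit $g_{k\tau}r_\theta x$ and the idealized chain $(y_k)$ are negligible in Ces\`aro average: although summable \emph{in the group}, converting them into bounds on $|\widetilde\Psi|$ of the true point against $\widetilde\Psi(y_k,v_k)$ picks up the local metric distortion of $\cH_1(\alpha)$ near $y_k$, which is unbounded in the thin parts of moduli space. One therefore has to insert quantitative non--divergence --- for Lebesgue--a.e.\ $\theta$ the orbit $g_tr_\theta x$ spends an asymptotically negligible fraction of time arbitrarily far out in $\cH_1(\alpha)$ --- together with recurrence of the $P$--walk, both of which rest on the Minsky--Weiss type estimates already underlying the equidistribution results of \cite{EM,EMM}. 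Reconciling these estimates with the symbolic coding, so that the strong law of large numbers genuinely controls $\frac1T\int_0^T\phi(g_tr_\theta x)\,dt$ and not merely its idealized surrogate, is where the real work lies. Note that no effective rate of equidistribution is used or available, which is precisely why the argument is organized around the qualitative strong law of large numbers rather than around a variance or Borel--Cantelli estimate along a subsequence.
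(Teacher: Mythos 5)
Your plan diverges from the paper's in a productive-looking way, but it has a gap that I do not think can be papered over, and it concerns precisely the ``rigidity'' step you treat as settled.

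\textbf{The classification you invoke does not apply to your walk.} After the recoding, your Markov chain on $\cM$ has step law $\mu=\tfrac1N\sum_{j=0}^{N-1}\delta_{g_\tau h_{u_j}}$, a finitely supported measure on $P$. The Benoist--Quint/Kesten argument (as in Lemma~\ref{lemma:limit:is:stationary}) would indeed give that, $\mu^{\natls}$-a.s., every weak-$*$ limit of $\tfrac1K\sum_{k<K}\delta_{y_k}$ is \emph{$\mu$-stationary}. But a $\mu$-stationary measure for such a $\mu$ is not $P$-invariant, and there is no classification of $\mu$-stationary measures on $\cH_1(\alpha)$ for discrete $\mu$ supported on $P$: \cite[Theorem~1.6]{EM} assumes $\mu$ is $SO(2)$-bi-invariant and absolutely continuous with respect to Haar, which is exactly what buys the identification of stationary measures with $SL(2,\reals)$-invariant ones, and \cite[Theorem~1.2/1.4]{EM} classifies measures that are honestly \emph{$P$-invariant}, not stationary measures for a sub-semigroup of $P$. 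Density of the $P$-orbit of $x$ in $\cM$ does not rescue this: it yields neither unique ergodicity of your chain nor an a priori SLLN for the chain started at an arbitrary $x$. Without one of these two (either (i) an upgrade from $\mu$-stationary to $P$-invariant, or (ii) a classification of $\mu$-stationary measures for finitely supported $\mu$ on $P$), the limit of $\tfrac1K\sum\widetilde\Psi(y_k,v_k)$ cannot be identified as $\int_\cM\phi\,d\nu_\cM$, and the ``strong law of large numbers for the chain'' you appeal to is not available. This is the missing idea, and it is not a cosmetic one.

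\textbf{Comparison with the paper.} The paper takes the flow-average route and \emph{directly} establishes $P$-invariance of the weak-$*$ limits: they are automatically $g_t$-invariant (being time averages of the flow), and $u_\alpha$-invariance is obtained from Proposition~\ref{prop:ae:theta:P:invariant}, whose proof amounts to a correlation bound for the functions $f_t(\theta)=\phi(g_tr_\theta x)-\phi(g_tr_{\theta+\alpha_t}x)$ (Lemma~\ref{lemma:ft:fs:correlation:bound}, ``geodesic flow expands $SO(2)$ segments'') together with a strong law of large numbers on the circle (Lemma~\ref{lemma:strong:law:exponential:decay}). Only then is \cite[Theorem~1.4]{EM} (classification of ergodic $P$-invariant measures) applied. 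The approach is engineered exactly to avoid the issue you face: $P$-invariance comes out before any classification is used. Separately, the paper handles your ``main obstacle'' (mass escaping to infinity or concentrating on smaller affine submanifolds) via the $SO(2)$-averaged drift function of \cite[Proposition~2.13]{EMM} and Athreya's large-deviations bound (Theorem~\ref{theorem:athreya}), which you acknowledge but leave as a to-do; note also that the continuous $SO(2)$ average is what makes those estimates available, so the recoding to a discrete step law is working against you there as well. If you want to salvage your symbolic recoding, you would need to prove your own $P$-invariance statement for the limits of the chain --- essentially re-deriving the content of Proposition~\ref{prop:ae:theta:P:invariant} through the coding --- at which point you would have recovered the paper's argument in different clothing.
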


\begin{remark*}
\label{remark:birkhoff:flow:why}
The fact that (\ref{eq:birkhoff:flow}) holds for almost
all $x$ with respect to the Masur-Veech measure is an immediate consequence of
the Birkhoff ergodic theorem and the ergodicity of the Teichm\"uller
geodesic flow \cite{masur:interval}, \cite{veech:gauss}. The main point of
Theorem~\ref{theorem:birkhoff:flow} is that it gives a statement for
every flat surface $x$. This is important e.g. for applications to billiards in
rational polygons (since the set of flat surfaces one obtains from
unfolding rational polygons has Masur-Veech measure $0$). 
\end{remark*}

\begin{remark*}
\label{remark:birkhoff:flow:proofoutline}
The proof of Theorem~\ref{theorem:birkhoff:flow} is based on the
results of \cite{EM} and \cite{EMM} and the strong law of large numbers. One
complication is controlling the visits to neigborhoods of smaller
affine submanifolds,
which we do using the techniques of \cite{EMM}, \cite{Ath thesis}, 
\cite{Eskin:Masur} and which
were originally introduced by Margulis in \cite{Eskin:Margulis:Mozes:31}. 
\end{remark*}

\bold{The Kontsevich-Zorich cocycle.} We consider the \textit{Hodge bundle}
whose fiber above the point $(M,\omega)$ is $H^1(M,\reals)$. If we
choose a fundamental domain 
in Teichm\"{u}ller space
for the action of the mapping class group
$\Gamma$, then we have the cocycle $\tilde{A}: SL(2,\reals) \cross
\cH_1(\alpha) \to \Gamma$ where for $x$ in the fundamental domain,
$\tilde{A}(g,x)$ is the element of $\Gamma$ needed to return the point
$gx$ to the fundamental domain. Then, we define the Kontsevich-Zorich
cocycle $A(g,x)$ by 
\begin{displaymath}
A(g,x) = \rho(\tilde{A}(g,x)),
\end{displaymath}
where $\rho: \Gamma \to Sp(2g,\zed)$ is the homomorphism given by the
action on homology. The Kontsevich-Zorich cocyle can be interpreted as the
monodromy of the Gauss-Manin connection restricted to the orbit of
$SL(2,\reals)$, see e.g. \cite[page~64]{Zorich:survey}. 

\bold{A variant of the Oseledets multiplicative ergodic theorem.} 
\begin{theorem}
\label{theorem:oseledets:flow}
Fix $x \in \cH_1(\alpha)$, and let $\cM = \overline{SL(2,\reals)
  x}$ denote the smallest affine manifold containing $\cM$. Then
\begin{itemize}
\item[{\rm I.}] If $\psi_1(t,\theta) \le \dots \le
  \psi_{2g}(t,\theta)$  are the eigenvalues of the matrix $A^*(g_t,
  r_\theta x) A(g_t, r_\theta x)$, then for almost all $\theta \in
  [0,2\pi)$, we have 
\begin{equation}
\label{eq:oscelets:flow:I}
\lim_{t \to \infty} \frac{1}{t} \log \psi_i(t,\theta) = 2\lambda_i.
\end{equation}
Here the numbers $\lambda_1 \ge \dots \ge \lambda_{2g}$ depend only on
$\cM$. They are called the \textit{Lyapunov exponents} of the Kontsevich Zorich
cocycle on $\cM$.  
\item[{\rm II.}] For almost all $\theta$, the limit
\begin{displaymath}
\lim_{t \to \infty} (A^*(g_t, r_\theta x) A(g_t, r_\theta x))^{\frac{1}{2t}} \equiv
\Lambda(x,\theta) 
\end{displaymath}
exists. Moreover, the eigenvalues of the matrix $\Lambda(x,\theta)$,
taken with their multiplicities, coincide with the numbers
$e^{\lambda_i}$. Furthermore,
\begin{displaymath}
\lim_{t \to \infty} \frac{1}{t} \log \| A(g_t, r_\theta x)
\Lambda^{-t}(x,\theta) \| = \lim_{t \to \infty} \frac{1}{t} \log \|
\Lambda^n(x,\theta) A^{-1}(g_t, r_\theta x) \| = 0. 
\end{displaymath}
\item[{\rm III.}] Let $\alpha_1 < \dots < \alpha_s$ denote the
  distinct Lyapunov exponents $\lambda_i$. Let $\cU_i(x,\theta)
  \subset H^1(M,\reals)$
  denote the corresponding eigenspaces of $\Lambda(x,\theta)$. We set
  $\cV_0(x,\theta) = \{ 0 \}$ and $\cV_i(x,\theta) = \cU_1(x,\theta)
  \oplus \dots \oplus \cU_i(x,\theta)$. Then, for almost all $\theta$,
  and for any $v \in \cV_i(x,\theta) \setminus \cV_{i-1}(x,\theta)$,
  we have
\begin{displaymath}
\lim_{t \to \infty} \frac{1}{t} \log \|A(g_t, r_\theta x) v \| =
\alpha_i. 
\end{displaymath}
\end{itemize}
\end{theorem}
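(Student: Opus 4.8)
The plan is to bootstrap from the classical Oseledets theorem — which, applied to the Teichm\"uller flow $(g_t)$ on $\cM$ with the ergodic probability measure $\nu_\cM$, yields all three conclusions for $\nu_\cM$-almost every point — to the statement ``for every $x$, and almost every direction $\theta$'', exactly as Theorem~\ref{theorem:birkhoff:flow} upgrades the Birkhoff theorem. First I would invoke Filip's continuity theorem to obtain, over all of $\cM$, a continuous $g_t$-equivariant direct-sum decomposition $H^1(M,\reals) = \bigoplus_j \cU_j(y)$ of the Hodge bundle which, for $\nu_\cM$-a.e.\ $y$, is the Oseledets decomposition of the Kontsevich--Zorich cocycle, the summand $\cU_j$ carrying a single Lyapunov exponent $\lambda_{i(j)}$. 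Since this decomposition is defined and continuous at \emph{every} point of $\cM$, in particular at every $r_\theta x$, it gives meaning to the matrix $\Lambda(x,\theta)$ and the subspaces $\cU_i(x,\theta),\cV_i(x,\theta)$ of the statement; the goal becomes to show that for every $x$ and a.e.\ $\theta$ the limit $\Lambda(x,\theta)$ exists with eigenspaces exactly $\cU_j(r_\theta x)$ and eigenvalues $e^{\lambda_{i(j)}}$, from which parts I, II, III all follow.

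The heart of the argument is the estimate: for every $x$, a.e.\ $\theta$, and each $j$, every nonzero $v \in \cU_j(r_\theta x)$ satisfies $\lim_{t\to\infty}\frac1t\log\|A(g_t,r_\theta x)v\| = \lambda_{i(j)}$. For the upper bound, Kingman's subadditive ergodic theorem gives $\lambda_{i(j)} = \inf_n \frac1n\int_\cM\log\|A(g_n,\cdot)|_{\cU_j}\|\,d\nu_\cM$, and I would apply an $L^1$-version of the Birkhoff equidistribution behind Theorem~\ref{theorem:birkhoff:flow} to the function $y\mapsto\log\|A(g_n,y)|_{\cU_j(y)}\|$ — continuous but not compactly supported, so one truncates and discards the near-cusp contribution using quantitative non-divergence — to conclude $\limsup_t\frac1t\log\|A(g_t,r_\theta x)|_{\cU_j}\|\le\frac1n\int_\cM\log\|A(g_n,\cdot)|_{\cU_j}\|\,d\nu_\cM$ for each $n$ and a.e.\ $\theta$, hence $\le\lambda_{i(j)}$. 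For the matching lower bound for \emph{every} singular value, I would apply the same Birkhoff-type statement to the \emph{additive} cocycle $y\mapsto\log|\det(A(g_1,y)|_{\cU_j(y)})|$, whose time-average converges for a.e.\ $\theta$ to $(\dim\cU_j)\lambda_{i(j)}$; combining $\frac1t\log|\det(A(g_t,r_\theta x)|_{\cU_j})|\to(\dim\cU_j)\lambda_{i(j)}$ with the upper bound on the largest singular value forces, by an elementary induction, every singular value of $A(g_t,r_\theta x)|_{\cU_j}$ to have logarithmic growth $\lambda_{i(j)}$.

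To assemble this into the theorem, fix a single full-measure set of $\theta$ on which the conclusion of Theorem~\ref{theorem:birkhoff:flow} and the above estimates hold for all $j$ (a countable intersection of full-measure sets). Recurrence of $g_t r_\theta x$ to compact subsets of $\cM$ — with excursions into the cusp of only logarithmic depth, by non-divergence — together with the continuity of the $\cU_j$ on $\cM$, shows that for each subset $S$ of indices the angle between $\bigoplus_{j\in S}\cU_j$ and $\bigoplus_{j\notin S}\cU_j$ along the orbit decays at most subexponentially. Hence $A(g_t,r_\theta x)$ is, up to $e^{o(t)}$ factors, block-diagonal with respect to $\bigoplus_j\cU_j$, the $j$-th block having both norm and smallest singular value $e^{(\lambda_{i(j)}+o(1))t}$. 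Standard linear algebra then yields the existence of $\Lambda(x,\theta) = \lim_t (A^*(g_t,r_\theta x)A(g_t,r_\theta x))^{1/2t}$, identifies its eigenspaces and eigenvalues, gives the vanishing of the two regularity limits in part II, and shows $v\in\cV_i(x,\theta)\setminus\cV_{i-1}(x,\theta)$ grows at rate $\alpha_i$; part I is then the statement about the singular values of $\Lambda(x,\theta)$.

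The hard part is the ingredient used throughout the second paragraph: proving the Birkhoff-type equidistribution along $g_t r_\theta x$, for almost every $\theta$, for functions that are only in $L^1(\nu_\cM)$ rather than $C_c$. As in the proof of Theorem~\ref{theorem:birkhoff:flow}, this rests on combining the equidistribution of the expanding circles $\{g_t r_\theta x : \theta\in[0,2\pi)\}$ toward $\nu_\cM$ — a consequence of the rigidity theorems of \cite{EM}, \cite{EMM} — with a strong-law-of-large-numbers argument realizing the relevant time-averages as Birkhoff sums of a martingale difference sequence (up to negligible errors) whose conditional expectations converge by circle equidistribution; the delicate point is uniform integrability near the cusp, which demands control of the visits of the orbit to neighborhoods of the boundary of $\cM$ and of smaller affine submanifolds via the Margulis-type non-divergence estimates of \cite{EMM}, \cite{Eskin:Masur}, \cite{Ath thesis}, \cite{Eskin:Margulis:Mozes:31}. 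A second, related difficulty is to guarantee that the cocycle loses only subexponentially much during these cusp excursions — that the $e^{o(t)}$ errors above are genuinely negligible — which uses the logarithmic depth of excursions together with polynomial bounds on the Hodge norm near the cusp.
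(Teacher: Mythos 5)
Your proof collapses at the first step, and the failure propagates through everything that follows. You assert that Filip's theorem yields ``a continuous $g_t$-equivariant direct-sum decomposition $H^1(M,\reals)=\bigoplus_j\cU_j(y)$ which, for $\nu_\cM$-a.e.\ $y$, is the Oseledets decomposition of the Kontsevich--Zorich cocycle, the summand $\cU_j$ carrying a single Lyapunov exponent.'' This is not what Filip's theorem says, and such an object does not exist. Filip's theorem (Theorem~\ref{theorem:filip}) applies to $\nu_\cM$-measurable \emph{almost-invariant splittings} in the sense of Definition~\ref{def:almost:invariant:splitting}, i.e.\ families of subspaces permuted by $A(g,x)$ for \emph{all} $g\in SL(2,\reals)$. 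The Oseledets decomposition of the Teichm\"uller geodesic flow is only $g_t$-equivariant; it is \emph{not} $r_\theta$-equivariant --- the Oseledets subspaces at $r_\theta x$ depend on the forward direction $\theta$ and change with it --- so it is not an almost-invariant splitting and Filip's result does not give its continuity. Indeed, the Oseledets filtration of a cocycle over a flow is in general only measurable, not continuous. What Filip's theorem (combined with the semisimplicity result of \cite[Theorem~A.6]{EM}) actually produces is the continuous decomposition $H^1=\bigoplus_i V_i$ into $SL(2,\reals)$-invariant \emph{strongly irreducible} subbundles, and each $V_i$ may carry several distinct Lyapunov exponents. You are conflating these two decompositions.

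Because the $\cU_j$ are not continuous, the functions $y\mapsto\log\|A(g_n,y)|_{\cU_j(y)}\|$ and $y\mapsto\log|\det(A(g_1,y)|_{\cU_j(y)})|$ are only measurable, and the Birkhoff-type equidistribution theorem you want to apply (which, as in Theorem~\ref{theorem:birkhoff:flow}, controls integrals of \emph{continuous} test functions along the expanding circles) cannot be fed such integrands. More importantly, the determinant trick that you use to get the lower bound only works on a single-Lyapunov-exponent block: if you replace $\cU_j$ by the genuinely continuous pieces $V_i$, then $\frac1t\log|\det(A(g_t,r_\theta x)|_{V_i})|\to\sum_{\lambda\in V_i}\lambda$ together with the Kingman upper bound on the top singular value does \emph{not} pin down the individual singular values when $V_i$ has more than one exponent. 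This is exactly the difficulty the paper addresses by a completely different mechanism: the zero-one law Lemma~\ref{lemma:zero:one:law:star} (which uses strong irreducibility via Lemma~\ref{lemma:non:atomic:lyapunov}) together with the order-of-quantifiers point in Remark~\ref{rem:lower:bound} --- for a good base point, \emph{every} vector escapes the random stable subspace of the \emph{next} $L$-block with high probability, and this is what makes the lower bound go through; exterior powers then recover the remaining singular values. Your proposal never engages with this mechanism, and the remark after Theorem~\ref{theorem:oseledets:flow} (Furman's counterexamples for uniquely ergodic systems) indicates it cannot be bypassed by pure Birkhoff/Kingman considerations.
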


We note that Theorem~\ref{theorem:oseledets:flow} remains true if $A(
\cdot, \cdot)$ denotes the Kontsevich-Zorich cocycle acting on any
(continuous) subbundle of any tensor power of the Hodge bundle.

\begin{remark*} The fact that the conclusions of
  Theorem~\ref{theorem:oseledets:flow} hold for almost all $x$ with
  respect to the affine measure $\nu_\cM$ (or in particular with
  respect to the Masur-Veech measure) is just the classical Oseledets
  multiplicative ergodic theorem. The main point of
  Theorem~\ref{theorem:oseledets:flow} is that the conclusion holds
  for \textit{all} $x \in \cH_1(\alpha)$. This has some applications which partly motivated this paper,
  in particular in connection to the wind-tree model
  \cite{DHL:Windtree:diffusion}, \cite{Fraczek:Ulcigrai} and earlier results on IETs \cite{Zorich:deviation}, \cite{MMY}. See Section~\ref{sec:applications}.
  The question
  of whether Theorem~\ref{theorem:oseledets:flow} is true was raised
  in \cite{Forni:Sobolev}.
\end{remark*}

It is well known that parts II and III of
Theorem~\ref{theorem:oseledets:flow} follow from part I by an argument
which does not involve any ergodic theory (see
\cite{Goldsheid:Margulis}, 
from which our statement of the multiplicative
ergodic theorem was taken). It is thus enough to show that part I
holds for all $x$ and almost all $\theta$. Our proof of I is based on
the same ideas as the proof of Theorem~\ref{theorem:birkhoff:flow},
namely the results of \cite{EM}, \cite{EMM} and the strong law of
large numbers. However, we also need another important input: the
theorem of Filip \cite{Filip} 
stated as Theorem~\ref{theorem:filip} below. 
This complication can
be traced back to the fact that the Kingman and Oseledets ergodic
theorems can fail to hold at some points even for uniquely ergodic
systems (see \cite{Furman:osceledets} and references therein). 

\begin{definition}[$\nu$-measurable almost invariant splitting]
\label{def:almost:invariant:splitting}
Let $X$ be a space on which $G = SL(2,\reals)$ acts, preserving a
measure $\nu$. 
Suppose $V$ is a real vector space, and suppose $A: G \cross X \to
SL(V)$ is a cocycle. We say that $A$ has an almost invariant splitting 
if there exists $n > 1$ and for a.e $x$ there exist
nontrivial subspaces
$W_1(x), \dots, W_n(x) \subset V$ such that $W_i(x) \cap W_j(x) = \{0
\}$ for $1 \le i < j \le n$ and also for a.e $g \in G$ and
$\nu$-a.e. $x \in X$, 
\begin{displaymath}
A(g,x) W_i(x) = W_j(g x) \qquad \text{ for some $1 \le j \le n$ }. 
\end{displaymath}
The map $x \to \{ W_1(x), \dots, W_n(x) \}$ is required to be
$\nu$-measurable. 
\end{definition}

\begin{definition}[Strongly irreduclible cocycle]
\label{def:strongly:irreducible}
A cocycle $A$ is strongly irreducible with respect to the measure
$\nu$ if is does not admit a $\nu$-measurable almost invariant splitting. 
\end{definition}

In this paper, we prove the following:
\begin{theorem}
\label{theorem:lyapunov:top:flow}
Fix $x \in \cH_1(\alpha)$, and let $\cM = \overline{SL(2,\reals) x}$
be the smallest affine invariant manifold containing $x$. 
Let $V$ be $SL(2,\reals)$ invariant subbundle of (some exterior power
of) the Hodge bundle which is defined and \emph{is continuous} on
$\cM$. Let $A_V: SL(2,\reals) \cross \cM \to V$ denote the restriction
of (some exterior power of) 
the Kontsevich-Zorich cocycle to $V$, and suppose that $A_V$ 
is strongly irreducible with respect
to the affine measure $\nu_\cM$ whose support is $\cM$. Then, for
almost all $\theta \in [0,2 \pi)$, 
\begin{equation}
\label{eq:theorem:lyapunov:top:flow}
\lim_{t \to \infty} \frac{1}{t} \log \|A_V(g_t, r_\theta x) \| 
\end{equation}
exists and coincides with the top Lyapunov exponent of $A_V$. 
\end{theorem}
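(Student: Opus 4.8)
The plan is to reduce the statement to a statement about the $g_t$-flow being "Birkhoff-like" for a cleafwise quantity built from the cocycle, and then to apply the same machinery that drives Theorem~\ref{theorem:birkhoff:flow}. First I would fix a measurable Oseledets (or, since $A_V$ is strongly irreducible, a top-exponent) datum on $\cM$: by the classical Oseledets theorem applied to the affine measure $\nu_\cM$, for $\nu_\cM$-a.e.\ $y$ the limit $\lim_{t\to\infty}\frac1t\log\|A_V(g_t,y)\|$ exists and equals the top exponent $\lambda_{\mathrm{top}}(A_V)$. The difficulty is that our $x$ is an \emph{arbitrary} point, not a $\nu_\cM$-generic one; the rotation orbit $\{r_\theta x\}$ may meet the bad set for every $\theta$, and — unlike in the Birkhoff setting — a subadditive cocycle can genuinely fail to have the right growth at individual points of uniquely ergodic systems, as noted in the excerpt. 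So some honest input beyond equidistribution is required.

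The key steps, in order: (1) Using \cite{EM} and \cite{EMM}, show that for a.e.\ $\theta$ the trajectory $\{g_t r_\theta x\}$ equidistributes with respect to $\nu_\cM$; this is exactly the ingredient already used for Theorem~\ref{theorem:birkhoff:flow}, together with the nondivergence / escape-of-mass control from \cite{EMM}, \cite{Eskin:Masur}, \cite{Ath thesis} that keeps the trajectory in a large compact set for a definite proportion of time. (2) Observe that $t\mapsto \log\|A_V(g_t,r_\theta x)\|$ is a subadditive cocycle over the flow; by the subadditive ergodic theorem applied along the (now equidistributing) trajectory, combined with continuity of $A_V$ on $\cM$ and compactness control, the $\limsup$ of $\frac1t\log\|A_V(g_t,r_\theta x)\|$ is bounded above by $\lambda_{\mathrm{top}}(A_V)$ for a.e.\ $\theta$. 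The lower bound $\liminf \ge \lambda_{\mathrm{top}}(A_V)$ is the real content and is where strong irreducibility and Filip's continuity theorem (Theorem~\ref{theorem:filip}) enter: one wants to rule out that the projection of a generic vector onto the top-exponent direction decays, which is where an almost-invariant splitting would be manufactured if the $\liminf$ were too small. (3) Conclude that the limit exists and equals $\lambda_{\mathrm{top}}(A_V)$.

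The mechanism for step (2)'s lower bound is a random-walk / martingale argument in the spirit of Furstenberg: consider the action of $A_V$ on projective space $\mathbb{P}(V)$ (or on the exterior powers, to capture $\lambda_{\mathrm{top}}$ via $\log$ of operator norm), push forward a reference measure on $\mathbb{P}(V)$ along $g_t r_\theta x$, and use the equidistribution from step (1) to control the time-averages of $\log\|A_V(g_t,r_\theta x)v\|/\|v\|$ for the generic fiber direction. The strong law of large numbers lets us pass from averaged statements to almost-sure (in $\theta$) statements along the flow. The obstruction to be defeated is that the limiting stationary measure on $\mathbb{P}(V)$ might be supported on a union of proper subspaces, giving a smaller exponent — but by Filip's theorem such a measurable invariant sub-structure on a continuous subbundle over $\cM$ is forced to be one of the finitely many continuous $SL(2,\reals)$-invariant subbundles, and strong irreducibility of $A_V$ excludes precisely this. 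Assembling, the limit in \eqref{eq:theorem:lyapunov:top:flow} exists for a.e.\ $\theta$ and equals the top Lyapunov exponent.

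I expect the main obstacle to be step (2), specifically the lower bound: transferring the Furstenberg-type positivity/nondegeneracy of the top exponent — which classically holds $\nu_\cM$-a.e.\ — to the \emph{individual} rotated orbit $r_\theta x$ for a.e.\ $\theta$. Equidistribution alone is not enough (this is the Furman-type failure for subadditive/multiplicative theorems at individual points cited in the excerpt); one genuinely needs the rigidity of measurable invariant structures provided by strong irreducibility together with Filip's continuity result, plus the quantitative recurrence estimates to guarantee that the favorable growth is not washed out by excursions into the cusp. Making the interplay between "a.e.\ $\theta$", "a.e.\ time along the flow", and "$\nu_\cM$-a.e.\ fiber direction" rigorous via the strong law of large numbers is the technical heart of the argument.
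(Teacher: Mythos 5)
Your high-level plan — prove an upper bound from equidistribution plus subadditivity, then defeat the lower-bound obstruction using strong irreducibility — points in the right general direction, and you correctly identify that the lower bound is where the real work lives (and that uniquely-ergodic examples show equidistribution alone cannot suffice). However, there are two substantive problems with the proposal as written.

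First, you misassign the role of Filip's theorem. Theorem~\ref{theorem:filip} is \emph{not} used in the paper's proof of Theorem~\ref{theorem:lyapunov:top:flow} (nor of its random-walk analogue Theorem~\ref{theorem:lyapunov:top:walk}). Strong irreducibility with respect to $\nu_\cM$ is already an explicit \emph{hypothesis} of the theorem you are proving, so there is no measurable invariant sub-structure to upgrade to a continuous one: the hypothesis kills it directly. What strong irreducibility buys, via Lemma~\ref{lemma:non:atomic:lyapunov} (from~\cite[Appendix~C]{EM}, \cite{EMath}) and then Lemma~\ref{lemma:zero:one:law:star} (essentially~\cite[Lemma~14.4]{EM}), is that the random ``slow'' flag $\cV_{s-1}(y,\bar g)$ is non-atomic as a function of $\bar g$, so for any fixed fiber vector $v_y$ the probability of $v_y$ lying near $\cV_{s-1}(y,\bar g)$ is small. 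Filip's theorem is used only one level up: to decompose the full Kontsevich--Zorich cocycle into \emph{continuous} strongly irreducible pieces so that Theorem~\ref{theorem:lyapunov:top:flow} (whose hypothesis demands continuity of the subbundle) can be applied piece by piece in the deduction of Theorem~\ref{theorem:oseledets:flow}.

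Second, the mechanism you sketch for the lower bound (a Furstenberg-type pushforward/martingale argument on $\mathbb{P}(V)$) is not carried out and would need to be replaced by the paper's concrete device. The paper introduces an ``effective Oseledets'' set $E_{good}(\epsilon,L)$ with the crucial quantifier order (see Remark~\ref{rem:lower:bound}): for $y \in E_{good}(\epsilon,L)$ and for \emph{every} direction $v$, a $(1-\epsilon)$-fraction of the next $L$ random steps expands $v$ at roughly the top exponent rate. Lemma~\ref{lemma:Egood:L} (whose proof uses Lemma~\ref{lemma:zero:one:law:star}, Lemma~\ref{lemma:Fgood:L}, and the elementary Lemma~\ref{lemma:bad:subspace}) gives $\nu_\cM(E_{good}(\epsilon,L)) \to 1$. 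Then the Birkhoff theorem (Corollary~\ref{cor:hits:often}) shows a typical trajectory spends most time in $E_{good}(\epsilon,L)$, and a greedy block decomposition together with the strong law of large numbers (Lemma~\ref{lemma:strong:law:os}) yields both bounds. Finally, the paper derives the flow statement from the random-walk statement via sublinear tracking (Lemma~\ref{lemma:sublinear:tracking}) rather than arguing directly along $g_t r_\theta x$; the authors note in the outline that a direct flow argument parallel to \S2 is possible, so your choice of route is not wrong in principle, but the route must still be fleshed out with the $E_{good}$ machinery, which is absent from the proposal.
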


The main additional input needed for the proof of
Theorem~\ref{theorem:oseledets:flow} is the following:
\begin{theorem}[\protect{\cite{Filip}}]
\label{theorem:filip}
Let $A(\cdot, \cdot)$ denote (some exterior power of) 
the Kontsevich-Zorich cocycle restricted
to an affine invariant submanifold $\cM$. Let $\nu_\cM$ be the affine
measure whose support is $\cM$, and suppose $A$ has a
$\nu_\cM$-measurable almost-invariant splitting. Then, the subspaces
$W_i(x)$ in Definition~\ref{def:almost:invariant:splitting} can be
taken to depend continuously on $x \in \cM$. 
\end{theorem}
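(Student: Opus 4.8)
The plan is to show that a $\nu_\cM$-measurable almost invariant splitting is in fact cut out by flat sub-local systems, hence real-analytic and in particular continuous. First I would reduce to a single measurable invariant subbundle. The cocycle permutes the set $\{W_1(x),\dots,W_n(x)\}$, which is the same datum as a measurable cocycle over the $SL(2,\reals)$-action with values in the symmetric group $S_n$; any cocycle into a finite group trivializes on a suitable finite ergodic cover, so there is a finite cover $\hat\cM\to\cM$ — to which the $SL(2,\reals)$-action and the affine measure $\hat\nu$ lift — over which the splitting becomes a genuine measurable invariant decomposition $V=\bigoplus_i W_i$ with $A(g,x)W_i(x)=W_i(gx)$. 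A Fubini and ergodicity argument then upgrades invariance ``for almost every $g$'' to invariance for every $g\in SL(2,\reals)$, after correcting each $W_i$ on a $\hat\nu$-null set. It therefore suffices to prove that every $\hat\nu$-measurable $SL(2,\reals)$-invariant subbundle $W$ of (an exterior power of) the Hodge bundle over $\hat\cM$ is continuous; continuity of the $W_i$ on $\hat\cM$ then descends to continuity of $x\mapsto\{W_1(x),\dots,W_n(x)\}$ on $\cM$.

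The essential input is Hodge-theoretic. Over $\cM$, and hence over $\hat\cM$, the Hodge bundle $H^1(M,\reals)$ — and each of its exterior powers — underlies a polarized variation of Hodge structure: the flat structure is the Gauss--Manin connection, the polarization is the cup product, and the Hodge filtration varies holomorphically. Moreover $\cM$ carries the structure of a quasi-projective variety over which this variation is algebraic. I would take this algebraicity as given, together with Forni's bound on the logarithmic derivative of the Hodge norm of a Gauss--Manin-flat section along the Teichm\"uller geodesic flow and the description of its equality case; this is the shape in which Hodge-theoretic positivity enters the argument.

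Now Deligne's semisimplicity theorem for polarizable variations of Hodge structure over a quasi-projective base shows that the $\ratls$-local system underlying $H^1$ on $\cM$ is semisimple, with canonical isotypic decomposition $H^1_\ratls=\bigoplus_k \cL_k\otimes_{E_k}P_k$, where the $\cL_k$ are pairwise non-isomorphic irreducible local systems, $E_k=\End(\cL_k)$, and $P_k$ is the multiplicity module. Being canonical, this decomposition consists of flat — in particular continuous — subbundles, each preserved by the cocycle, and the same holds inside any exterior power. One is thus reduced to the measurable invariant subbundles lying in a single block $\cL_k\otimes P_k$: such a subbundle necessarily has the form $\cL_k\otimes Q(x)$ with $Q(x)\subseteq P_k$ an $E_k$-submodule varying measurably, and $Q$ is invariant under the ``multiplicity cocycle'' induced on $P_k$. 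The remaining point is that this multiplicity cocycle admits no nonconstant measurable invariant subbundle, equivalently that $Q(x)$ is $\hat\nu$-almost everywhere equal to a fixed $E_k$-subspace $Q_k\subseteq P_k$. Here the polarization and the hyperbolicity of the $SL(2,\reals)$-action are brought to bear: the polarization induces a polarized Hodge structure on $P_k$, the multiplicity cocycle consequently has reductive algebraic hull (a Zimmer-type statement, with Forni-type positivity used to exclude proper invariant structure incompatible with the Hodge filtration), and one concludes that $Q$ is constant. Then $W=\bigoplus_k \cL_k\otimes Q_k$ is a flat sub-local system, hence real-analytic and a fortiori continuous; intersecting with the given exterior power and descending along $\hat\cM\to\cM$ finishes the proof.

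The main obstacle is this last step: excluding ``exotic'' measurable invariant subbundles of the multiplicity cocycle that are not algebraically defined. For a general measure-preserving fiberwise-linear action a semisimple local system may well carry such subbundles; what rescues the situation here is precisely the interaction of the hyperbolic $SL(2,\reals)$-dynamics with the positivity built into a polarized variation of Hodge structure. A close second obstacle is the fact, used throughout, that affine invariant manifolds are quasi-projective varieties — itself a substantial theorem.
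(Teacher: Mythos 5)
You should first note that the paper does not prove this statement: Theorem~\ref{theorem:filip} is imported verbatim from \cite{Filip} as an external input (the text even records the stronger conclusion from \cite{Filip} that the $W_i(x)$ are polynomial in period coordinates). So the comparison below is with Filip's argument, not with anything in this paper.

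There is a genuine gap, and it sits exactly at the heart of the theorem. Your sketch treats the measurable almost-invariant splitting as if it were already a decomposition of the \emph{local system}, so that Deligne semisimplicity and Schur's lemma apply blockwise. But the hypothesis (Definition~\ref{def:almost:invariant:splitting}) only gives subspaces $W_i(x)$ defined $\nu_\cM$-almost everywhere and permuted by the cocycle $A(g,x)$ along the $SL(2,\reals)$-action. Such a subbundle is a priori invariant only under the holonomies generated by the dynamics on a full-measure set; it need not be a flat sub-local system, need not be invariant under the full monodromy group of $\cM$, and need not be compatible with the isotypic decomposition $\bigoplus_k \cL_k\otimes P_k$. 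Consequently the step ``such a subbundle necessarily has the form $\cL_k\otimes Q(x)$'' is an application of Schur's lemma in the wrong category and is unjustified; a measurable dynamically invariant subbundle could, for all you have shown, be a ``graph''-type object transverse to the flat decomposition. The entire content of Filip's theorem is precisely the passage from measurable dynamical invariance to flatness/continuity, and in your sketch that passage is compressed into the closing assertion that ``the multiplicity cocycle has reductive algebraic hull \dots and one concludes that $Q$ is constant,'' which is stated rather than proved. Filip's actual argument runs in the opposite order: one first uses the Eskin--Mirzakhani structure theory (\cite[Appendix~A]{EM}) together with Forni's formula for the logarithmic derivative of the Hodge norm (positivity of the second fundamental form, and the analysis of its equality locus) to show that a measurable invariant subbundle is a.e.\ compatible with the Hodge decomposition, i.e.\ is a sub-variation of Hodge structure; only then do Deligne-type semisimplicity and a theorem-of-the-fixed-part argument yield flatness and polynomiality in period coordinates.

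A secondary problem is your reliance on the quasi-projectivity of $\cM$ as a black box. In the literature this algebraicity is itself proved by Filip \emph{using} the semisimplicity and rigidity of the Kontsevich--Zorich cocycle as input, so invoking it here is circular; and even granting it, Deligne's theorem only governs flat subbundles, which, by the previous paragraph, is not what you are handed. The finite-cover reduction for the permutation of the $W_i$ and the Fubini upgrade from almost-every $g$ to every $g$ are fine, but the proof as proposed does not close.
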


In fact it is proven in \cite{Filip} that the
  dependence of the $W_i(x)$ on $x$ is polynomial in the period coordinates.

\bold{Proof of Theorem~\ref{theorem:oseledets:flow} from
  Theorem~\ref{theorem:lyapunov:top:flow} and Theorem~\ref{theorem:filip}.}
Let $A(\cdot, \cdot)$ denote the Kontsevich-Zorich cocycle restricted
to an affine invariant submanifold $\cM$. Then by
\cite[Theorem~A.6]{EM} $A(\cdot, \cdot)$
is semisimple, in the sense that (after passing to some finite cover)
for $\nu_\cM$-almost all $x \in \cM$
there is a $\nu_\cM$-measurable direct sum decomposition 
\begin{displaymath}
H^1(M,\reals) = \bigoplus_{i=1}^n V_i(x),
\end{displaymath}
where all the subbundles $V_i$ are $\nu_\cM$-measurable,
$SL(2,\reals)$-invariant 
and strongly irreducible. By Theorem~\ref{theorem:filip}, the $V_i(x)$
can be taken to depend continuously on $x$. \mc{fix finite cover} Then, 
by Theorem~\ref{theorem:lyapunov:top:flow} it follows that the top Lyapunov
exponent on each $V_i$ is defined for almost all $\theta$. 
(To connect the conclusion of
Theorem~\ref{theorem:lyapunov:top:flow} with (\ref{eq:oscelets:flow:I}), 
note that the top
eigenvalue of $A_V(g_t, r_\theta x)^* A_V(g_t, r_\theta x)$ is
$\|A_V(g_t, r_\theta x ) \|^2$.)

To get that the rest of the Lyapunov exponents
are defined for almost all $\theta$ 
it suffices to repeat the argument for the cocycle acting on the
exterior powers of the Hodge bundle. (Note that the norm of
$A_V(g_t, r_\theta x)$ acting on $\bigwedge^d(V)$ is the product of the top
$d$ eigenvalues of $[A_V(g_t,r_\theta x)^* A_V(g_t r_\theta
x)]^{1/2}$ acting on $V$). 
This proves statement I of
Theorem~\ref{theorem:oseledets:flow}, and then statements II and III
of Theorem~\ref{theorem:oseledets:flow} follow as in
\cite{Goldsheid:Margulis}. 
\qed\medskip

\begin{remark}
\label{remark:2d:bundle}
For the case of a two-dimensional continuous subbundle $V$ of the Hodge
bundle, Theorem~\ref{theorem:oseledets:flow} follows from
Theorem~\ref{theorem:lyapunov:top:flow} (without the need for
Theorem~\ref{theorem:filip}). Indeed, by
\cite[Theorem~1.4]{Avila:Eskin:Moeller:yeti} any
$SL(2,\reals)$-invariant measurable subbundle of the Hodge bundle is
symplectic, and thus even dimensional. Thus, the restriction of
the cocycle to a two-dimensional subbundle is automatically strongly
irreducible. (This is the case which arises in \cite{DHL:Windtree:diffusion},
\cite{Fraczek:Ulcigrai}.) 
\end{remark}

\subsection{Outline of the proofs}
The main tool for the proof of the Birkhoff theorem for the random
walks is Lemma~\ref{lemma:limit:is:stationary} which is essentially \cite[Lemma
3.2]{BQIII} (and is sometimes called the ``Kesten law of large
numbers''). It shows that almost surely, the limit measure we are
considering is $\mu$-stationary. We then use the classification of
ergodic $\mu$-stationary measures in \cite[Theorem~1.6]{EM}. In our
setting, we do not initially know that the limiting measure is
ergodic. In fact, 
the primary additional complication in the
Birkhoff theorem, is to show that almost surely the limiting measure is a
probability measure that gives zero weight to any proper affine
$SL_2(\mathbb{R})$ invariant submanifold. This is treated by the now
standard technique of integral inequalties originating in
\cite[Section 5]{Eskin:Margulis:Mozes:31} (in our setting we appeal to
results of \cite{Ath thesis}, \cite{BQIII} and \cite{EMM}).

The stategy for our proof of the Oseledets theorem for the random
walk (or more precisely Theorem~\ref{theorem:lyapunov:top:walk} which
implies it) is to make the the standard version of the
Oseledets theorem effective in a large set, and then to use the
Birkhoff theorem we just proved to show that a typical trajectory will
spend most of the time in that large set. This turns out to be
sufficient to prove the upper bound in
Theorem~\ref{theorem:lyapunov:top:walk}, but not for the proof the lower
bound: see Remark \ref{rem:lower:bound}. This difficulty is dealt with
using Lemma~\ref{lemma:zero:one:law:star},
which is essentially \cite[Lemma 14.4]{EM}. For this argument to work,
we need the strong irreducibility assumption. 
 
The main step in the proof of the Birkhoff theorem for the flow in
\S{3} is Proposition~\ref{prop:ae:theta:P:invariant} (which replaces
Lemma~\ref{lemma:limit:is:stationary}): it implies that for almost all
$\theta$, the limit measure we have is $P$-invariant, where $P$ is the
upper triangular subgroup of $SL(2,\reals)$. We then use the
classification of ergodic $P$-invariant measures in \cite[Theorem~1.2]{EM};
the rest of the proof of the Birkhoff theorem for the flow is as in
the proof for the analogous theorem for random walk in \S{2}. 
The proof of Proposition~\ref{prop:ae:theta:P:invariant} (which
applies to any reasonable  action of $SL(2,\reals)$) is based on the
strong law of large numbers and Lemma \ref{lemma:ft:fs:correlation:bound}
which is essentially the fact that geodesic flow uniformly expands
small $SO(2)$ segments.  

We could have derived the Oseledets theorem for the flow
(Theorem~\ref{theorem:oseledets:flow}) from the Birkhoff theorem for
the flow Theorem~\ref{theorem:birkhoff:flow} as in \S{2}. We chose not
to do so since we show in \S{4} that it follows easily from the
Oseledets theorem for the random walk proved in \S{2}. 
 \subsection{Applications}\label{sec:applications}
Often the Birkhoff ergodic theorem or Oseledets multiplicative ergodic theorem have been inputs to prove dynamical theorems about flows on typical surfaces. By strengthening these results to hold on the typical direction on every surface many  of these results can be improved.
\subsubsection{The windtree model}
Following work of Ehrenfest and Ehrenfest \cite{windtree}, Hardy and Weber \cite{windtree2} introduced  a model which has attracted attention recently. One places a fixed $[0,a] \times [0,b]$ rectangular scatter $\mathbb{Z}^2$ periodically in the plane with sides parallel to the axes and examines the behavior of a point mass traveling in this space which follows the rules of elastic collision when it hits one of the rectangles. Let $T(a,b)$ be the complement of the obstacles in the plane. Let $\phi^{\theta}$ be the flow in direction $\theta$ on $T(a,b)$.
\begin{thm}\cite{DHL:Windtree:diffusion} Let $d(\cdot,\cdot)$ denote Euclidean distance in the plane. 
For almost every $(a,b)$ and almost every $\theta$ and every point $p \in T(a,b)$ we have 
$$ \underset{T \to \infty}{\limsup}\, \frac{\log(d(p,\phi^{\theta}_T(p))}{\log(T)}=\frac 2 3.$$
\end{thm}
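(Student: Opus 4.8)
The plan is to package the argument of \cite{DHL:Windtree:diffusion} around the ``every surface, almost every direction'' version of the Oseledets theorem established above, together with the Birkhoff theorem for the flow. First I would unfold the billiard: for fixed $(a,b)$, the flow $\phi^\theta$ on $T(a,b)$ is, via the standard unfolding, conjugate to the straight-line flow in direction $\theta$ on an infinite translation surface $\widetilde X(a,b)$, which is a $\mathbb{Z}^2$-cover of a compact translation surface $x(a,b) = (M_{a,b},\omega_{a,b}) \in \cH_1(\alpha)$, the cover being arranged so that its deck group records displacement by the lattice. Under this dictionary the Euclidean displacement $d(p,\phi^\theta_T(p))$ differs by a bounded amount from the norm of the $\mathbb{Z}^2$-valued displacement cocycle evaluated along the flow segment of length $T$, and this cocycle is the restriction of (an exterior power of) the Kontsevich--Zorich cocycle over $x(a,b)$ to a particular $SL(2,\reals)$-invariant subbundle $V\subset H^1(M_{a,b},\reals)$ detecting the cover. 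The computation of \cite{DHL:Windtree:diffusion} shows that for almost every $(a,b)$ this $V$ is \emph{two-dimensional}, varies continuously (in fact algebraically) over $\cM=\overline{SL(2,\reals)x(a,b)}$, and has top Lyapunov exponent equal to $\tfrac23$ (hence, being symplectic, its exponents are $\pm\tfrac23$).

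With this in hand I would apply the main theorems of the paper at the single point $x=x(a,b)$. Since $V$ is two-dimensional it is automatically strongly irreducible (Remark~\ref{remark:2d:bundle}, via \cite{Avila:Eskin:Moeller:yeti}), so Theorem~\ref{theorem:lyapunov:top:flow} applies without invoking Theorem~\ref{theorem:filip}, and Theorem~\ref{theorem:oseledets:flow} gives, for \emph{every} such $x$ and for almost every $\theta$, the splitting $V=\cU^+(x,\theta)\oplus\cU^-(x,\theta)$ into lines along which $A_V(g_t,r_\theta x)$ expands, resp.\ contracts, at rate $\tfrac23$. It then remains to pass from these Teichm\"uller-time statements to statements about the flow of length $T$. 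This is a Zorich-type deviation argument: a flow segment of length $T$ in direction $\theta$ corresponds to running $g_t r_\theta x$ for time $t\approx\log T$, and the displacement cocycle along the flow is compared to $A_V(g_t,r_\theta x)$ via a roof-function estimate. The only delicate point is that this comparison needs the geodesic $\{g_t r_\theta x\}$ to return to a fixed compact subset of $\cH_1(\alpha)$ with positive frequency, so that the roof function stays integrable along the orbit; but this is exactly what Theorem~\ref{theorem:birkhoff:flow} guarantees for almost every $\theta$, applied to a bump function supported on a large compact set. Combining, one gets, for every $p$ and almost every $\theta$, the upper bound $\limsup_T \log d(p,\phi^\theta_T(p))/\log T\le\tfrac23$ from the top-exponent part, the exceptional set of $\theta$ being a null set built from the exceptional sets of Theorems~\ref{theorem:birkhoff:flow} and~\ref{theorem:oseledets:flow}.

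The hard part will be the matching lower bound --- showing that the $\limsup$ actually reaches $\tfrac23$ for \emph{every} starting point $p$. Fixing $p$ fixes the cohomological datum against which the displacement cocycle is paired, and one must exclude, for almost every $\theta$, the possibility that the cycle traced by the trajectory of $p$ asymptotically lies in the contracting line $\cU^-(x,\theta)$, in which case the displacement would grow strictly slower than $T^{2/3}$. This is precisely the geometric counterpart of the lower-bound obstacle in the paper's own proof of the Oseledets theorem for the flow (cf.\ Remark~\ref{rem:lower:bound}): a genuinely \emph{pointwise} lower bound cannot be read off from Birkhoff genericity plus an effective Oseledets theorem alone. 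I would resolve it the same way, through the zero--one law in the spirit of Lemma~\ref{lemma:zero:one:law:star}, whose hypotheses are available here precisely because the two-dimensional symplectic bundle $V$ is strongly irreducible; this forces, for a.e.\ $\theta$, the component of the trajectory's cycle along $\cU^+(x,\theta)$ to be infinitely often of the expected size, pinning the $\limsup$ at $\tfrac23$. Finally, the reason the conclusion is a $\limsup$ and not a limit is the contracting direction: along stretches of the trajectory the displacement class comes close to $\cU^-(x,\theta)$ and the instantaneous exponent dips below $\tfrac23$, so the $\limsup$ cannot be upgraded to a genuine limit.
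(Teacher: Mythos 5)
This statement is not proved in the paper at all: it is quoted verbatim from \cite{DHL:Windtree:diffusion} in the applications section, purely as motivation. The paper's only contribution here is the remark that Delecroix--Hubert--Leli\'evre later used Theorem~\ref{theorem:oseledets:flow} to upgrade ``almost every $(a,b)$'' to ``every $(a,b)$.'' So there is no internal proof to compare against, and your proposal should be judged as a reconstruction of the DHL argument.

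As such a reconstruction, your outline of the upper bound is essentially right: unfold to a $\mathbb{Z}^2$-cover of a compact surface, identify the displacement with the pairing against a rank-two symplectic $SL(2,\reals)$-invariant subbundle of the Hodge bundle (in fact there are two such bundles, one for each coordinate of the displacement), invoke the Eskin--Kontsevich--Zorich computation giving top exponent $\tfrac23$, note that two-dimensionality gives strong irreducibility for free (Remark~\ref{remark:2d:bundle}), and run a Zorich--Forni deviation argument with $t\approx\log T$, using Birkhoff genericity to control returns to a compact set. Note, though, that for the statement \emph{as given} (almost every $(a,b)$) none of the paper's theorems are needed; the classical Oseledets theorem on the relevant affine invariant manifold plus Fubini already suffices, which is how DHL originally argued. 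What you have sketched is really the strengthened ``every $(a,b)$'' version.

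The genuine gap is in your treatment of the lower bound. Lemma~\ref{lemma:zero:one:law:star} is a statement about the cocycle $A_V$ acting on an \emph{arbitrary} vector $w\in\proj^1(V)$ along random walk trajectories; it says nothing about the specific cohomology class determined by the orbit segment of a fixed point $p$ under the straight-line flow. To rule out that the trajectory's holonomy class asymptotically collapses onto the contracting line $\cU^-(x,\theta)$ one needs the deviation-of-ergodic-averages machinery (the fact that the relevant cocycles are not coboundaries, and that the ergodic integrals of every point $p$ register a component of definite size along the expanding direction at a subsequence of renormalization times). That is a separate geometric/cohomological input from DHL and Forni--Zorich, not a consequence of the zero--one law for the bundle. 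Your instinct that the pointwise lower bound is the hard part is correct, but the proposed mechanism does not close it.
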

  
  In subsequent versions of \cite{DHL:Windtree:diffusion}, Delecroix, Hubert and Leli\'evre used Theorem~\ref{theorem:oseledets:flow} to strengthen this theorem to apply to all obstacles. That is, for all $(a,b)$ and almost every $\theta$ and every point $p \in T(a,b)$ we have
  $$ \underset{T \to \infty}{\limsup}\, \frac{\log(d(p,\phi^{\theta}_T(p))}{\log(T)}=\frac 2 3.$$
  
  \begin{thm}\label{Fraczek:Ulcigrai} \cite{Fraczek:Ulcigrai} For almost every $(a,b)$ and almost every $\theta$, $\phi^{\theta}$ is not ergodic with respect to Lebesgue measure. In fact, the ergodic decomposition of Lebesgue measure has uncountably many ergodic components.
  \end{thm}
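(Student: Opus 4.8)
The plan is to reduce non-ergodicity of the windtree flow $\phi^\theta$ to an \emph{oscillation property of a $\mathbb{Z}^2$-valued cocycle over a compact translation flow}, and then to verify that property for almost every $(a,b)$ and almost every $\theta$ using Zorich-type deviation estimates. First I would perform the standard unfolding: reflecting the billiard trajectory in $T(a,b)$ across the sides of the rectangular obstacles — the reflection group being $(\mathbb{Z}/2)^2$ — conjugates $\phi^\theta$ to the directional flow $\psi^\theta_{\widehat X}$ on an infinite translation surface $\widehat X = \widehat X(a,b)$, which is a $\mathbb{Z}^2$-cover of a \emph{compact} translation surface $X = X(a,b)$ in a fixed stratum. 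The deck group $\mathbb{Z}^2$ records the displacement in the plane, so the cover is classified by a pair of integer relative homology classes, equivalently by an $\mathbb{R}^2$-valued cocycle $\psi = (\psi_1,\psi_2)$ over the compact flow $\psi^\theta_X$, and $\phi^\theta$ is ergodic iff $\psi^\theta_{\widehat X}$ is.

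Next I would use the ergodic-theoretic dictionary for abelian covers. Since $\psi^\theta_X$ on the compact surface is uniquely ergodic for almost every $\theta$ (Kerckhoff--Masur--Smillie / Veech), ergodicity of the $\mathbb{Z}^2$-extension $\psi^\theta_{\widehat X}$ is governed entirely by the cocycle $\psi$. The criterion I would invoke (in the spirit of Schmidt's essential-values theory, in the concrete Rokhlin-tower form developed for this problem) is: if there is a sequence of times $T_k \to \infty$ and, for each $k$, a positive-measure family of $\psi^\theta_X$-orbit segments of length $T_k$ along which the Birkhoff integral $\int_0^{T_k} \psi\,ds$ stays in a bounded subset of $\mathbb{R}^2$, then one can build a nonconstant $\psi^\theta_{\widehat X}$-invariant function, so $\phi^\theta$ is non-ergodic; and a refinement of this construction, parametrized by a continuous ``asymptotic level'' of the cocycle, yields an uncountable family of mutually singular invariant sets, hence uncountably many ergodic components.

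The analytic heart is then to establish the required oscillation of $\int_0^{T}\psi\,ds$ for almost every $(a,b)$ and almost every $\theta$. The class represented by $\psi$ lies in the part of $H^1$ on which the Kontsevich--Zorich cocycle has \emph{second} Lyapunov exponent $\lambda_2$ strictly between $0$ and $\lambda_1 = 1$ — this has been computed explicitly for the windtree family and is exactly what produces the diffusion exponent $2/3$ in \cite{DHL:Windtree:diffusion}. By Zorich's deviation theorem \cite{Zorich:deviation} and Forni's work \cite{Forni:Sobolev}, for almost every $\theta$ the Birkhoff integrals grow like $T^{\lambda_2}$, and because $\lambda_2 < 1$ (together with the recurrence encoded in the Oseledets filtration, realized concretely through Rokhlin towers coming from Rauzy--Veech induction or the Teichm\"uller flow) one finds a subsequence of tower times along which the integral over a definite proportion of orbit segments returns to a bounded neighbourhood of $0$ — precisely the input the criterion needs. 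To obtain this for the specific surface $X(a,b)$ for almost every $(a,b)$, I would combine the description of the $SL(2,\mathbb{R})$-orbit closure of $X(a,b)$ as a known affine manifold with positive $\lambda_2$, Fubini over the two-parameter windtree family, and the classical almost-every-$\theta$ ergodic theorems; alternatively one could feed in Theorem~\ref{theorem:oseledets:flow} to get the deviation statement for \emph{every} $(a,b)$ and almost every $\theta$, which is the strengthening the surrounding text alludes to.

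The main obstacle is the non-ergodicity criterion itself and its verification: promoting the merely \emph{sublinear} growth of the $\mathbb{Z}^2$-cocycle, together with its recurrence to bounded sets, to the actual construction of (uncountably many) invariant sets. This requires delicate matching between the Rokhlin-tower structure of $\psi^\theta_X$ and the scale at which the cocycle oscillates, and the construction must be uniform enough that the resulting invariant sets are genuinely distinct. A secondary but essential point is that the strict inequality $0 < \lambda_2 < 1$ must hold on the entire relevant subbundle of the Hodge bundle for the windtree locus, which rests on explicit Eichler--Shimura / Hodge-theoretic computations rather than on the soft ergodic machinery.
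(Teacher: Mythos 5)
This theorem is not proved in the paper at all: it is quoted verbatim from \cite{Fraczek:Ulcigrai} in the applications section, so there is no internal proof to compare against. Measured against the actual argument in the cited reference, your reduction is on target --- unfolding by $(\mathbb{Z}/2)^2$ to a $\mathbb{Z}^2$-cover $\widehat X(a,b)$ of a compact translation surface, recasting ergodicity of $\phi^\theta$ as ergodicity of a $\mathbb{Z}^2$-valued cocycle extension, and locating the relevant cohomology classes in the rank-two subbundle whose Kontsevich--Zorich exponent is $2/3$ --- but the mechanism you propose for deducing non-ergodicity is inverted, and the step you call the ``analytic heart'' would fail.

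Concretely: you propose to find a subsequence of times $T_k$ and positive-measure families of orbit segments along which $\int_0^{T_k}\psi\,ds$ stays \emph{bounded}, and to build invariant functions from these returns. In Schmidt's essential-value framework, positive-measure bounded (indeed, prescribed-value) returns of the cocycle are exactly the input used to prove \emph{ergodicity} of a skew product, not to disprove it; bounded returns enlarge the group of essential values. The actual route to non-ergodicity is the opposite one: an ergodic infinite-measure-preserving $\mathbb{Z}^2$-extension of a (uniquely) ergodic base must be conservative, hence its cocycle must be recurrent; Fraczek and Ulcigrai show the cocycle is \emph{transient}, because the classes defining the cover have nonzero projection onto the positive-exponent Oseledets subspace, so by the Zorich--Forni deviation estimates the Birkhoff integrals grow like $T^{\lambda}$ with $\lambda=2/3>0$ for a.e.\ $(a,b)$ and a.e.\ $\theta$. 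Transience kills conservativity and hence ergodicity, and the resulting dissipative structure is what yields uncountably many ergodic components. Your assertion that $\lambda_2<1$ ``together with the recurrence encoded in the Oseledets filtration'' produces returns to a bounded neighbourhood of $0$ on a definite proportion of segments is unsubstantiated and contradicts the very deviation statement you invoke two sentences earlier ($T^{2/3}\to\infty$). The relevant inequality for the theorem is $\lambda_2>0$, not $\lambda_2<1$. As written, the proposal identifies the correct objects but argues toward the wrong side of the recurrence/transience dichotomy.
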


  In \cite{Fraczek:Ulcigrai:preprint} using Theorem \ref{theorem:birkhoff:flow} and \ref{theorem:oseledets:flow} Fraczek and Ulcigrai extended Theorem \ref{Fraczek:Ulcigrai} to apply to all $(a,b)$.

  \subsubsection{Behavior of Birkhoff sums and averages} In a seminal work, A. Zorich connected the deviation of ergodic averages of certain functions to the behavior of the Kontsevich-Zorich cocycle \cite{Zorich:deviation}. In doing so he proved estimates on the error term in the Birkhoff ergodic theorem for these functions for the flow in almost every direction on almost every flat surface. Theorem \ref{theorem:oseledets:flow} shows that these results hold for the flow in the typical direction on every surface. Zorich's insight has been developed by other authors and we now mention some results that can be improved by Theorem \ref{theorem:oseledets:flow}.
  
  Marmi, Moussa and Yoccoz defined Roth type interval exchange transformations. We state their condition in the language of this paper. 
  We first need a preliminary discussion. To every surface $x$ in our fundamental domain we can associate a canonical basis of cohomology. In this way we have a basis for $\tilde{A}(g_t,x)g_tx$. Call this $\mathcal{B}_t=\{b_1,...,b_{2g}\}$. We also have the basis for $x$, $\mathcal{C}=\{c_1,...,c_{2g}\}$ and its image under $g_t$ which is $\{g_tc_1,...,g_tc_{2g}\}$. Chose $t_1(x)$ to be the smallest time so that for any $t\geq t_1$ we have that for each $b_i$, in the unique expression $b_i=\sum a_ig_tc_i$ all the $a_j\neq 0$. Inductively choose $t_{i+1}=t_i+t_1(g_{t_i}x)$. 
  \begin{defin} The vertical flow on a surface $x$ is \emph{Roth type} if
  \begin{enumerate}
  \item[(a)] $\underset{i \to \infty}{\lim}\, \frac{t_i-t_{i-1}}{t_i}=0$.
  \item[(b)] Let $V_0(x)$ denote the set of vectors $\bar{v}$ so that $\sum v_ih_i=0$ where $h_i$ are the horizontal components of the periods of $x$. There exists $C\in \mathbb{R}$, $\theta<1$ so that $||A_{V_0}(g_t,x)||<Ct^{\theta}$ where $A_{V_0}$ is the restriction of $A$ to the vector space $V_0$.
  \item[(c)] Let $V^{(s)}$ denote the set of vectors, $v$, so that there exists $\sigma>0$, $C>0$ so that $\|A(g_t,x)v\|<Ct^{-\sigma}$ for all $t>1$.  Let $B(a,b)$ be the operator that sends $V^{(s)}(g_ax)$ to $V^{(s)}(g_bx)$ and $B_*(a,b)$ be the operator that sends $H^1(M,\mathbb{R})/V^{(s)}(g_ax)$ to 
  $H^1(M,\mathbb{R})/V^{(s)}(g_bx)$. For all $\tau>0$ we have that there exists $C_{\tau,x}:=C$ so that 
  $$\|B(t_i,t_{i+r})\|\leq C\|B(0,n)\|^{\tau} \text{ and } \|B_*(t_i,t_{i+r})^{-1}\|<C\|B(0,t_{i+r})\|.$$ 
  \end{enumerate}
  \end{defin}
  \begin{thm}\cite{MMY}\label{thm:mmy} Let $T$ be an IET that satisfies the Keane condition and which arises as first return to a transversal of a vertical flow that is Roth type. Let $f$ be a bounded variation function. Then there exists a function $\phi$ that is constant on each subinterval of $T$  and another function $\psi$ so that $f-\phi=\psi-\psi \circ T$.
  \end{thm}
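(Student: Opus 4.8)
The plan is to prove this by Rauzy--Veech renormalization: the statement is the cohomological-equation theorem of Marmi--Moussa--Yoccoz, and the argument I would give is theirs. First I would set up the renormalization picture. Since $T$ satisfies the Keane condition, the Rauzy--Veech--Zorich algorithm applied to $T$ never stops, producing a nested sequence $I=I^{(0)}\supset I^{(1)}\supset\cdots$ with $|I^{(n)}|\to 0$ and first-return maps $T^{(n)}$ of $T$ to $I^{(n)}$ that are interval exchanges on the same alphabet $\mathcal{A}$; iterating the return-time decomposition cuts $I$ into Rohlin towers $Z^{(n)}_\alpha$ over subintervals $I^{(n)}_\alpha$, whose floors $T^j I^{(n)}_\alpha$ ($0\le j<q^{(n)}_\alpha$) tile $I$. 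Let $B(n)$ be the cumulative Rauzy--Veech matrix, so that $B(n)_{\alpha\beta}$ counts the visits of the orbit of $I^{(n)}_\alpha$ to $I^{(0)}_\beta$ before it first returns to $I^{(n)}$; this is exactly the relative-cohomology (length-coordinate) form of the Kontsevich--Zorich cocycle along the Teichm\"uller geodesic through $x=(M,\omega)$, and the renormalization times $t_n$ appearing in the definition of Roth type satisfy $t_n\asymp\log\max_\alpha q^{(n)}_\alpha$, so that hypotheses (a)--(c) become estimates on $B(\cdot)$. The main tool is the special Birkhoff sum operator $(S(n)f)(y)=\sum_{j=0}^{q^{(n)}_\alpha-1}f(T^jy)$ for $y\in I^{(n)}_\alpha$: it is multiplicative along the nesting, it does not increase total variation (because $S(n)f$ restricted to $I^{(n)}_\alpha$ is a sum of isometric copies of the restrictions of $f$ to the floors above $I^{(n)}_\alpha$, and the floors partition $I$), and on a function that is constant on each $I^{(0)}_\beta$ it acts exactly through the matrix $B(n)$.

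Next I would reduce the cohomological equation to a single estimate. By a Gottschalk--Hedlund-type telescoping up the towers, $f-\phi=\psi-\psi\circ T$ is solvable with $\psi$ bounded as soon as one finds a piecewise-constant $\phi$ with $\sup_n\|S(n)(f-\phi)\|_\infty<\infty$. Since the variation of $S(n)(f-\phi)$ stays bounded by $\mathrm{Var}_I f$, the size of $S(n)(f-\phi)$ is controlled by the means of $S(n)(f-\phi)$ over the $I^{(n)}_\alpha$, and these are governed by the action of $B(n)$ on a fixed vector attached to $f-\phi$ --- essentially its vector of interval-means --- up to errors produced by the oscillation of $f$ on the ever-thinner floors. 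Writing $f=\phi+f_0$ with $\phi=\sum_\alpha c_\alpha\chi_{I_\alpha}$, I would split $\mathbb{R}^{\mathcal{A}}$ along the Oseledets filtration of $B(\cdot)$ at $x$ into the expanding part $E^u$, the central (zero-exponent, relative-cohomology) part $E^c$, and the stable part $E^s$, and choose $c$ so that the vector attached to $f-\phi$ lies in $E^s(x)$; this is possible because the interval-mean vector of a piecewise-constant function is an arbitrary element of $\mathbb{R}^{\mathcal{A}}$. (Equivalently, the cohomology classes of the indicators $\chi_{I_\alpha}$ exhaust, modulo coboundaries and constants, the obstruction space to the cohomological equation for bounded-variation data, which is why a piecewise-constant correction always suffices.)

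The main step is then the estimate $\sup_n\|S(n)(f-\phi)\|_\infty<\infty$, and this is where the Roth hypotheses get used, splitting the contributions according to the three regimes of the cocycle. The $E^s$-part gives contributions that decay; keeping their telescoped sum (and, in the sharp version, the variation bookkeeping along $V^{(s)}$) convergent is exactly what condition (c) --- controlling $\|B(t_i,t_{i+r})\|$ on the stable subspace $V^{(s)}$, and the inverse on $H^1/V^{(s)}$, against $\|B(0,\cdot)\|$ --- is for. The unstable part of the interval-mean vector is killed by the choice of $\phi$ above. The delicate part is the central subspace $E^c$: a priori its contribution grows polynomially in $t_n$, which would already ruin boundedness, and condition (b) --- $\|A_{V_0}(g_t,x)\|<Ct^\theta$ with $\theta<1$ --- together with the balance condition (a) (so that $(t_i-t_{i-1})/t_i\to 0$ and no single renormalization step is disproportionately large) is precisely what pins the accumulated central contribution down to be bounded. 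Combining the three regimes gives $\sup_n\|S(n)(f-\phi)\|_\infty\le C(\mathrm{Var}_I f+1)$; applying the same bound to partial columns (and using the variation estimate to pass from full-tower sums to partial sums) then bounds $\big|\sum_{j<k}(f-\phi)(T^jy)\big|$ uniformly over $y\in I^{(n)}$, $k\le q^{(n)}$, and $n\ge 0$.

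Finally I would assemble $\psi$: for a.e.\ $x$ and each $n$, write $x=T^{k_n}y_n$ with $y_n\in I^{(n)}$ and $0\le k_n<q^{(n)}$, set $\psi_n(x)=\sum_{j=0}^{k_n-1}(f-\phi)(T^jy_n)$, observe from the consistency of the tower structure between levels $n$ and $n+1$, together with $|I^{(n)}|\to 0$ and the uniform bound just obtained, that $(\psi_n(x))_n$ is Cauchy, and put $\psi(x)=\lim_n\psi_n(x)$. This $\psi$ is bounded and measurable, and since for a.e.\ $x$ and large $n$ both $x$ and $Tx$ lie in the step-$n$ tower with $k_n(Tx)=k_n(x)+1$, one reads off $\psi\circ T-\psi=f-\phi$ a.e., with $\phi$ constant on each exchanged interval; the variation control in fact yields a more regular $\psi$, but boundedness is all that the statement asks. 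I expect the main obstacle to be the estimate $\sup_n\|S(n)(f-\phi)\|_\infty<\infty$ --- and inside it the treatment of the zero-exponent central directions, where the merely sub-linear growth allowed by hypothesis (b) (with $\theta<1$) must be converted, using the balance hypothesis (a), into a genuinely bounded contribution to the Birkhoff sums; this coupling of all three Roth conditions is the technical core of the Marmi--Moussa--Yoccoz argument and is exactly where the assumption on the vertical flow is spent.
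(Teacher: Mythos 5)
The paper does not prove this theorem: it is stated as a cited result from Marmi--Moussa--Yoccoz (note the \cite{MMY} tag attached to the theorem header), and the paper treats it as a black box. What the paper actually does around it is verify that the Roth-type hypothesis is satisfied for almost every direction on an arbitrary fixed flat surface --- Lemma~\ref{lem:in compact}, the unlabeled lemma that follows it, and the short remarks about conditions (b) and (c) --- so that the MMY theorem can be invoked for a full-measure one-parameter family of IETs. There is therefore no proof of the statement in this paper against which to compare your attempt; the comparison you want is with \cite{MMY} itself.

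Taken on its own, your sketch is a reasonable outline of the MMY argument: Rauzy--Veech renormalization and Rohlin towers, special Birkhoff sums $S(n)$ acting through the cocycle matrices $B(n)$ and non-increasing on total variation, a Gottschalk--Hedlund reduction to $\sup_n\|S(n)(f-\phi)\|_\infty<\infty$, the choice of $\phi$ to eliminate the unstable Oseledets component, and the Roth hypotheses spent on the central and stable contributions. Two places where the sketch compresses away the real technical content: (i) one cannot choose $\phi$ once and for all so that the interval-mean vector of $f-\phi$ lies and remains exactly in $E^s$ --- the oscillation of $f$ at each new scale feeds error back into the unstable and central directions, and the actual MMY estimate is an inductive bookkeeping of means against variations along the renormalization, not a single one-time projection; and (ii) the parenthetical ``the classes of $\chi_{I_\alpha}$ exhaust the obstruction space'' is a consequence of the theorem, not an available input, so it cannot be used to justify a priori that a piecewise-constant correction suffices. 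Neither objection invalidates the plan, but both point at exactly the steps that carry the weight in \cite{MMY}.
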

  Suppose we fix a flat surface and a tranversal. We now consider the
first return to the transversal of the flow in the direction
$\theta$. This gives a family of IETs parametrized by $\theta$.  We
claim that Theorems  \ref{theorem:birkhoff:flow} and
\ref{theorem:oseledets:flow} extend Theorem \ref{thm:mmy} to apply to
a full measure subset of this one parameter family of IETs (for any
choice of flat surface and transversal). The most involved step is verifying (a).
   Let $G_N=\{x:t_1(x)<N\}$.
   \begin{lemma}\label{lem:in compact} For any $SL(2,\mathbb{R})$ ergodic measure  $\nu$ there exists $N_{\nu}$ so that $G_{N_{\nu}}$ contains an 
   open set of  positive $\nu$-measure.
   \end{lemma}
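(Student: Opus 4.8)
The plan is to show that the set $G_N=\{x : t_1(x)<N\}$ exhausts the support of $\nu$ as $N\to\infty$, and then to extract an open set of positive measure by a Baire-type/countable-additivity argument. First I would argue that the function $x\mapsto t_1(x)$ is finite for every $x$ in the relevant fundamental domain: this is essentially the statement that for the Teichm\"uller flow $g_t$, the image $\{g_t c_1,\dots,g_t c_{2g}\}$ of a fixed cohomology basis eventually has all coordinates nonzero when expanded in the dynamically-chosen basis $\mathcal{B}_t$. This follows from the fact that $g_t$ expands the horizontal (unstable) part and contracts the vertical (stable) part of each period, so after finite time every $b_i$ acquires a genuine component along each $g_t c_i$; the only subtlety is uniformity, which is where finiteness of $t_1(x)$ for each individual $x$ (as opposed to a uniform bound) is all we need here. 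Hence $G_N\nearrow$ covers the whole space as $N\to\infty$.

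Next, since $\bigcup_{N\in\natls} G_N = \cH_1(\alpha)$ (or at least the support of $\nu$) and $\nu$ is a probability measure, countable additivity gives $\nu(G_N)\to 1$, so there is $N_0$ with $\nu(G_{N_0})>1/2>0$. The remaining issue is that $G_{N_0}$ need not be open. To fix this I would show that $t_1$ is upper semicontinuous, or more precisely that $\{x : t_1(x) < N\}$ differs from an open set by a $\nu$-null set, using that $\nu$ is an affine measure (locally Lebesgue in period coordinates) so it has no atoms on the "boundary" locus where some coordinate $a_j$ vanishes exactly. Concretely, the condition "all $a_j\neq 0$ for all $t\ge t_1$" is an open condition in $x$ once $t_1$ is fixed and we have the required expansion; shrinking $N_0$ slightly to $N_\nu=N_0+1$ absorbs the boundary effects, and then $\interior(G_{N_\nu})$ still carries positive $\nu$-measure because the complement of the interior inside $G_{N_0}$ lies in a positive-codimension (hence $\nu$-null) set.

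I expect the main obstacle to be the regularity/semicontinuity of $x\mapsto t_1(x)$: the definition of $t_1(x)$ involves a $\forall t\ge t_1$ quantifier on a nondegeneracy condition for the change-of-basis matrix between $\mathcal{B}_t$ and $g_t\mathcal{C}$, and controlling how this behaves as $x$ varies — in particular ruling out that $t_1$ jumps upward on a positive-measure set or is infinite — requires care with the fundamental domain for the mapping class group and with the continuity of the Kontsevich-Zorich cocycle. Everything else (covering by the $G_N$, extracting positive measure, passing to the interior) is soft measure theory. I would therefore organize the write-up as: (1) finiteness of $t_1$ pointwise via hyperbolicity of $g_t$ on periods; (2) the open-set / semicontinuity statement; (3) the trivial countable-additivity conclusion.
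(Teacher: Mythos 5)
There are two genuine gaps here, and the paper's proof is structured precisely to avoid both. First, your step (1) --- finiteness of $t_1(x)$ for \emph{every} $x$ --- is not justified by the hyperbolicity heuristic you give. Expansion/contraction of $g_t$ on periods shows at best that the component of each $b_i$ along the dominant direction eventually wins; it does not show that \emph{every} coefficient $a_j$ in the expansion $b_i=\sum a_j g_t c_j$ is nonzero for all large $t$. That positivity is a Perron--Frobenius-type statement about the renormalization matrices, and it is tied to recurrence of the Teichm\"uller geodesic: for a surface whose geodesic $g_t x$ diverges (e.g.\ one with a vertical saddle connection) there is no reason for $t_1(x)$ to be finite, so $\bigcup_N G_N$ need not exhaust the space. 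Second, even granting finiteness $\nu$-a.e.\ (which does follow from recurrence), your step (2) does not produce an \emph{open} set: the defining condition of $G_N$ quantifies over all $t\ge t_1$, so $G_N$ is an infinite intersection of open conditions --- at best $G_\delta$ --- and small perturbations of $x$ produce unbounded perturbations of $g_t x$ for large $t$, so upper semicontinuity of $t_1$ is far from clear. The ``boundary has positive codimension'' fix is not substantiated, since the boundary of $G_N$ is not a finite union of vanishing loci of coordinates.

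The paper's argument replaces both steps by a compactness device. By Kleinbock--Weiss, for any $x$ in the support of $\nu$ there is a direction $\theta$ with $g_t r_\theta x$ contained in a compact set $K'$ for all $t\ge 0$. Masur's criterion for unique ergodicity is then used quantitatively: for any compact $K$ there is $N_K$ such that $g_t y\in K$ for $0\le t\le N_K$ forces $y\in G_N$. The condition ``$g_t y\in\mathcal{K}$ for $0\le t\le M$'' (with $\mathcal{K}$ a compact set containing $K'$ in its interior, and $M$ finite) \emph{is} manifestly open in $y$, so one gets an open neighborhood $U_M$ of $r_\theta x$ inside $G_M$; positivity of $\nu(U_M)$ follows because $\nu$ has full support on the orbit closure. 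If you want to salvage your outline, you would need to route the finiteness and the openness through exactly this kind of ``bounded orbit segment'' condition rather than through $t_1$ directly.
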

      \begin{proof}By  \cite[Theorem 2 (iii)]{KW} for every flat surface $x$ there exists $\theta\in S^1$ and a compact set $K'$  so that $g_tr_\theta x \in K'$ for all $t$. By Masur's critertion for unique ergodicity, if $K$ is any compact set then there exists $N_K$ so that $g_tx \in K$ for all $0\leq t\leq N_K$ implies $x \in G_N$.  Choosing a compact set $\mathcal{K}$ so that $K'$ is a subset of the interior of  $\mathcal{K}$ we have that for every $M\in \mathbb{R}$ there is an open neighborhood $U_M$ of $r_\theta x$ so that $g_tU_M \subset \mathcal{K}$ for all $0\leq t \leq M$. Choose $M$ so that if $g_ty \in \mathcal{K}$ for all $0\leq t \leq M$ then $y \in G_M$. Thus, $U_M$ is an open set in $G_M$ with positive $\nu$ measure where $\nu$ is the unique $SL(2,\mathbb{R})$-ergodic probability measure with support $\overline{SL(2,\mathbb{R})x}$.
   \end{proof}
   \begin{lemma} If Theorem \ref{theorem:birkhoff:flow} and Lemma \ref{lem:in compact} are true then for every $x$,
   $$\{\theta: \text{ so that condition (a) is satisfied for }r_\theta x\}$$ has full Lebesgue measure.  
   \end{lemma}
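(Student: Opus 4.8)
The plan is to reduce the statement to showing that, for almost every $\theta$, the renormalization times $t_i=t_i(r_\theta x)$ satisfy $(t_i-t_{i-1})/t_i\to 0$ — this is condition~(a). Recall that $t_i-t_{i-1}=t_1(g_{t_{i-1}}r_\theta x)$ and $G_N=\{t_1<N\}$. First I would record the soft input: by Lemma~\ref{lem:in compact} each $G_N$ with $N$ large contains an open set of positive measure, and since $t_1$ is finite $\nu_\cM$-almost everywhere (a standard genericity fact: almost every surface in $\cM$ has minimal vertical flow and an eventually positive forward cocycle), we have $\nu_\cM(G_N)\uparrow 1$ as $N\to\infty$. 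Hence for every $k$ I can choose $N_k$ and $\phi_k\in C_c(\cH_1(\alpha))$ with $0\le\phi_k\le\mathbf 1_{G_{N_k}}$ and $\int_\cM\phi_k\,d\nu_\cM>1-1/k$.

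Next I would apply Theorem~\ref{theorem:birkhoff:flow} simultaneously to the countable family $\{\phi_k\}$ (and, using Birkhoff with a single bump function supported in a compact set, to see that the forward geodesic of $y:=r_\theta x$ is nondivergent, so that every $t_1(g_{t_i}y)$ is finite and the sequence $(t_i)$ is well defined and unbounded). For almost every $\theta$ this gives, for every $k$,
\[
\liminf_{T\to\infty}\frac1T\bigl|\{\,t\in[0,T]:g_ty\in G_{N_k}\,\}\bigr|\ \ge\ \int_\cM\phi_k\,d\nu_\cM\ >\ 1-\tfrac1k .
\]
Fix such a $\theta$. The one non-soft ingredient I then need is a structural fact about the renormalization: \emph{a single visit to $G_N$ closes off the current renormalization interval}, i.e.\ if $t_{j-1}<s\le t_j$ and $g_sy\in G_N$, then $t_j\le s+N$. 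The mechanism is that $t_1(g_sy)<N$ says the forward cocycle based at $g_sy$ is strictly positive (in the cone adapted to Rauzy--Veech renormalization) at all times $\ge N$, while the cocycle from time $t_{j-1}$ to time $s$ is nonnegative with no zero row; composing them preserves strict positivity, so the forward cocycle based at $g_{t_{j-1}}y$ is strictly positive at all times $\ge(s-t_{j-1})+N$, i.e.\ $t_1(g_{t_{j-1}}y)\le(s-t_{j-1})+N$. In particular $\{\,s\in(t_{j-1},t_j]:g_sy\in G_N\,\}\subseteq(t_j-N,t_j]$, so the orbit spends at most $N$ units of time in $G_N$ inside each renormalization interval.

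The proof then finishes with a counting estimate: for fixed $k$, writing $N=N_k$, for all large $J$
\[
\Bigl(1-\tfrac2k\Bigr)t_J\ \le\ \bigl|\{\,t\le t_J:g_ty\in G_N\,\}\bigr|\ =\ \sum_{j=1}^{J}\bigl|\{\,s\in(t_{j-1},t_j]:g_sy\in G_N\,\}\bigr|\ \le\ t_{J-1}+N,
\]
using the density bound on the left and, on the right, the trivial estimate $\le t_{J-1}$ for the first $J-1$ intervals together with the structural fact for the last. Thus $t_J-t_{J-1}\le\tfrac2k\,t_J+N$, so $\limsup_J(t_J-t_{J-1})/t_J\le 2/k$; letting $k\to\infty$ yields condition~(a) for $r_\theta x$, which is what is claimed. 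I expect the structural fact of the third paragraph to be the main obstacle: one must make precise in what sense $t_1(z)$ is the first time an associated cone-contracting (Rauzy--Veech) cocycle becomes and then stays strictly positive, so that a single good visit propagates backward over the whole interval. Given that, everything else is the ergodic input of Theorem~\ref{theorem:birkhoff:flow}, the fact that $\nu_\cM(G_N)\to 1$ (for which Lemma~\ref{lem:in compact} together with ergodicity of the $SL(2,\reals)$-action on $\cM$ suffices), and the elementary count above.
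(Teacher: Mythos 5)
Your argument is a genuinely different route, and it relies on an input strictly stronger than what the lemma allows. You need $\nu_\cM(G_N)\uparrow 1$, which you get by invoking ``$t_1<\infty$ $\nu_\cM$-a.e.'' as a standard genericity fact; Lemma~\ref{lem:in compact} only supplies a \emph{single} $N$ and a \emph{single} open set $U\subset G_N$ of positive measure, and your counting estimate visibly fails if you only have that (with $\nu(U)=\ell<1$ you'd get $\limsup(t_J-t_{J-1})/t_J\le 1-\ell$, not $0$). Since the lemma's hypothesis is exactly ``Theorem~\ref{theorem:birkhoff:flow} and Lemma~\ref{lem:in compact}'', the extra genericity claim is the one missing piece in your proposal. (There is also the minor technical point that to produce $\phi_k\in C_c$ with $\phi_k\le\mathbf 1_{G_{N_k}}$ and $\int\phi_k>1-1/k$ one needs an \emph{open} subset of $G_{N_k}$ of nearly full measure, which is not automatic from $\nu(G_{N_k})>1-1/k$ alone.)

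The paper's proof avoids this entirely, and the device is worth internalizing. It takes only the one $U\subset G_N$ with $\nu_\cM(U)>0$ from Lemma~\ref{lem:in compact} and uses the fact that Theorem~\ref{theorem:birkhoff:flow} gives \emph{existence} of the limit $\lim_T \frac1T\int_0^T\chi_U(g_tr_\theta x)\,dt=\nu_\cM(U)>0$, not merely positivity of a lower density. It then uses the same structural observation you use (visits to $G_N$ inside a renormalization interval are confined to an $N$-neighborhood of its endpoints), in the form
\[
\int_0^{t_i+N}\chi_U(g_tr_\theta x)\,dt \;\ge\; \int_0^{t_{i+1}-N}\chi_U(g_tr_\theta x)\,dt,
\]
and argues by contradiction: if (a) fails there is $c>0$ with $(1+c)t_i<t_{i+1}$ along a subsequence, and dividing the above by $t_i$ forces the Birkhoff averages evaluated at $T=t_i$ and $T=t_{i+1}$ to differ by a factor $(1+c)$ up to $O(N/t_i)$, contradicting convergence of the averages to the single positive number $\nu_\cM(U)$. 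Your argument is ``quantitative and direct,'' the paper's is ``single-set and by contradiction''; the latter buys you the ability to work under exactly the stated hypotheses. The structural fact about $t_1$ is asserted without proof in both, so that is not a point of difference.
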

   \begin{proof} Let $x$ be given and $\nu$ be the unique $SL(2,\mathbb{R})$-ergodic probability measure whose support is $\overline{SL(2,\mathbb{R})x}$. By Lemma \ref{lem:in compact}  there exists $N$ and $U \subset G_N$ open with positive $\nu$-measure. By Theorem \ref{theorem:birkhoff:flow} we have
    that $\underset{T \to \infty}{\lim} \, \frac 1 T \int_0^T \chi_U(g_tr_{\theta}x)dt$ exists and is greater than zero for almost every $\theta$. Now, $\int_0^{t_i(r_\theta x)+N} \chi_U(g_tr_\theta x)dt \geq \int_0^{t_{i+1}(r_\theta x)-N} \chi_U(g_tr_{\theta}x)dt$. Indeed, if there exists $s>t_{i}(r_{\theta}x)+N$ so that $g_sr_\theta x \in U \subset G_N$ then $t_{i+1}(r_\theta x)\leq s+N$. So the absence of condition (a) for $r_{\theta}x$ implies that there exists a fixed $c>0$ so that there are infinitely many $i$ with $(1+c)t_i<t_{i+1}$ and so
    \begin{multline}
    \frac 1 {t_i(r_\theta x)}(\int_0^{t_i (r_\theta x)} \chi_U(g_tr_\theta x)dt+N)\geq \frac 1 {t_i(r_\theta x)}
    (\int_0^{t_{i+1} (r_\theta x)} \chi_U(g_tr_\theta x)dt-N) >\\
    (1+c) \left(\frac 1 {t_{i+1}(r_\theta x)}( \int_0^{t_{i+1}(r_\theta x)} \chi_U (g_t r_\theta x)dt-N)\right).
    \end{multline}
Since $t_i$ goes to infinity with $i$ this implies that Theorem \ref{theorem:birkhoff:flow} does not hold for $r_\theta x$.
    So the absence of (a) for a positive measure set of directions contradicts Theorem \ref{theorem:birkhoff:flow}.
   \end{proof}

  This completes the verification of (a). Condition (b) follows from the the fact that $\lambda_2<1$ \cite[Theorem 0.2 (i)]{Forni:Deviation}. Condition (c) holds by Theorem \ref{theorem:oseledets:flow}. See Section \cite[Section 4.5]{MMY} for how (b) and (c) follow from the conclusion of Oseledets theorem (with additional input for (b)).
  
  In a recent further development a stronger notion, \emph{restricted Roth type} has been introduced to study the regularity of solutions to the cohomological equation \cite{MY}. Theorem \ref{theorem:oseledets:flow} shows that every flat surface whose orbit closure has $\lambda_g>0$ has that the flow in almost every direction is restricted Roth type.
  
  It is likely that Theorems 1 and 2 of \cite{buf} can be extended via Theorem \ref{theorem:oseledets:flow}.
\color{black}

    \bold{Acknowledgments:} We thank the Alexander Bufetov and Corinna Ulcigrai for suggestions that improved the readability of the paper. We thank Ronggang Shi for pointing out a mistake. J. Chaika was supported in part by NSF grants DMS 1300550 and 1004372.
    A. Eskin was supported in part by NSF grant DMS 1201422 and the Simons Foundation.



\section{Random walks}
\label{sec:random:walks}
To provide intuition, we first prove versions of
Theorem~\ref{theorem:birkhoff:flow} and
Theorem~\ref{theorem:lyapunov:top:flow} for random walks.
We use the following setup. Let $\mu$ be an $SO(2)$-bi-invariant
compactly supported measure on $SL(2,\reals)$ which is absolutely
continuous with respect to Haar measure. We consider the random walk
on $SL(2,\reals)$ whose transition probabilities are given by
$\mu$. This also defines a random walk on $\cH_1(\alpha)$, via the
$SL(2,\reals)$ action. (The trajectories of this random walk stay in
Teichm\"uller disks.) 

Let $\bar{g} = (g_1, \dots, g_2, \dots, )$ denote an element of
$SL(2,\reals)^\natls$. Let $\mu^\natls$ denote the product measure on
$SL(2,\reals)^\natls$. It follows from  the Oseledets multiplicative
ergodic theorem  that for $\mu^\natls$-almost-all $\bar{g}$, the trajectory 
\begin{displaymath}
g_1, g_2 g_1, \dots, g_{n-1} \dots g_1, \ g_n g_{n-1} \dots g_1
\end{displaymath}
tracks, up to sublinear error,  
a geodesic of the form $\{ g_t r_\theta \st t \in \reals \}$ with
respect the the \textit{right-invariant} metric on $SL(2,\reals)$.  
(This will be made more precise in \S\ref{sec:proof:osceledts:flow}.) The angle
$\theta$ depends on $\bar{g}$, but as we show in
\S\ref{sec:proof:osceledts:flow}, 
the distribution of $\theta$'s 
induced by $\mu^\natls$ is uniform. Thus, we expect to have analogues
of Theorem~\ref{theorem:birkhoff:flow} and
Theorem~\ref{theorem:oseledets:flow} (and
Theorem~\ref{theorem:lyapunov:top:flow}) in the random walk setup, where
the clause ``for almost all $\theta$'' is replaced by the clause ``for
almost all $\bar{g}$''. This is indeed the case, and we find the proofs
of the random walk versions, namely Theorem~\ref{theorem:birkhoff:walk} 
and Theorem~\ref{theorem:lyapunov:top:walk} a bit cleaner and easier
to follow. Also we will see below that
Theorem~\ref{theorem:lyapunov:top:flow} follows formally  from its random
walk version Theorem~\ref{theorem:lyapunov:top:walk}.

\subsection{A Birkhoff type theorem for the random walk}
\begin{theorem}
\label{theorem:birkhoff:walk}
Suppose $x \in \cH_1(\alpha)$. Let $\cM = \overline{SL(2,\reals) x}$
be the smallest affine invariant manifold containing $x$. Then, for
any $\phi \in C_c(\cH_1(\alpha))$, for $\mu^\natls$-almost all $\bar{g} \in
SL(2,\reals)^\natls$, we have
\begin{equation}
\label{eq:birkhoff:walk}
\lim_{N \to \infty} \frac{1}{N} \sum_{n=1}^N \phi(g_n \dots g_1 x) =
\int_\cM \phi \, d\nu_\cM, 
\end{equation}
where $\nu_\cM$ is the affine measure whose support is $\cM$. 
\end{theorem}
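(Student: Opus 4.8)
The plan is to follow the standard "limit measure is stationary, then use the measure classification" strategy, with the crucial extra work being to rule out escape of mass to smaller affine submanifolds (or to infinity). Fix $x$ and $\cM = \overline{SL(2,\reals)x}$, with affine measure $\nu_\cM$. For each $\bar g$ and each $N$, define the empirical measure $\mu_N^{\bar g} = \frac1N\sum_{n=1}^N \delta_{g_n\cdots g_1 x}$, a probability measure on the one-point compactification $\cH_1(\alpha)\cup\{\infty\}$. First I would invoke Lemma~\ref{lemma:limit:is:stationary} (the Kesten law of large numbers / \cite[Lemma 3.2]{BQIII}): for $\mu^\natls$-a.e. $\bar g$, every weak-$*$ subsequential limit $\nu_{\bar g}$ of $\{\mu_N^{\bar g}\}$ is $\mu$-stationary. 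A priori $\nu_{\bar g}$ is only a sub-probability measure, i.e. some mass may have escaped to $\infty$; write $\nu_{\bar g} = c\,\nu_{\bar g}' + (1-c)\delta_\infty$ with $\nu_{\bar g}'$ a probability measure on $\cH_1(\alpha)$ (when $c>0$).

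The heart of the argument is to show that for $\mu^\natls$-a.e.\ $\bar g$ we have $c = 1$ (no escape to infinity) \emph{and} $\nu_{\bar g}'$ gives zero mass to every proper affine $SL(2,\reals)$-invariant submanifold of $\cM$. This is where I would use the integral-inequality / Margulis-function technique originating in \cite[Section 5]{Eskin:Margulis:Mozes:31}, in the form developed in \cite{Ath thesis}, \cite{Eskin:Masur}, \cite{BQIII}, \cite{EMM}. Concretely: there is a proper function $u$ (a Margulis function) on $\cH_1(\alpha)$, together with constants $b>0$ and $0<c_0<1$, such that $(A_\mu u)(y) \le c_0\, u(y) + b$ for all $y$, where $A_\mu u(y) = \int u(gy)\,d\mu(g)$; this controls escape to $\infty$ and, averaged along the random walk via the strong law of large numbers (the Birkhoff-type statement for the shift on $SL(2,\reals)^\natls$), forces $\limsup_N \frac1N\sum_{n=1}^N u(g_n\cdots g_1 x)$ to be a.s. finite, hence $c=1$. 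The same machinery, applied to the family of Margulis functions attached to the proper affine submanifolds contained in $\cM$ (as in \cite{EMM}), shows that the random walk spends asymptotically zero fraction of time in any shrinking neighborhood of such a submanifold, so $\nu_{\bar g}'$ charges no proper affine invariant submanifold. Then by the classification of ergodic $\mu$-stationary measures \cite[Theorem~1.6]{EM}, $\nu_{\bar g}'$ — being $\mu$-stationary, supported in $\cM$, and giving zero mass to all proper affine invariant submanifolds — must equal $\nu_\cM$ (decompose into ergodic components; each ergodic component is an affine measure on some affine invariant submanifold of $\cM$, which by the no-mass property must be all of $\cM$).

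Having identified the unique subsequential limit, I would conclude: for a.e.\ $\bar g$, $\mu_N^{\bar g} \to \nu_\cM$ weakly, which for $\phi\in C_c(\cH_1(\alpha))$ is precisely \eqref{eq:birkhoff:walk}. One technical point to handle with care: the null set of "bad" $\bar g$ must be chosen independently of $\phi$, which is fine since the argument above produces a single full-measure set on which $\mu_N^{\bar g}\to\nu_\cM$, and then \eqref{eq:birkhoff:walk} follows for all $\phi$ simultaneously.

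I expect the main obstacle to be the second paragraph — verifying that almost surely the limiting measure is an honest probability measure charging no proper affine submanifold. The escape-to-infinity part is by now routine given the Margulis inequality and the strong law of large numbers, but the "no mass on proper affine submanifolds" part requires assembling the recurrence estimates of \cite{EMM} for the (possibly infinite, but locally finite) collection of affine submanifolds inside $\cM$ and upgrading them from an almost-everywhere-with-respect-to-$\nu_\cM$ statement to a pathwise statement valid for the specific starting point $x$; this is exactly the "now standard technique of integral inequalities" alluded to in the outline, and executing it carefully for every $x$ is the real content.
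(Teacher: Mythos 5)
Your proposal follows the paper's proof essentially step for step: stationarity of weak-$*$ limits of the empirical measures via the strong law (Lemma~\ref{lemma:limit:is:stationary}, i.e.\ \cite[Lemma~3.2]{BQIII}), the Margulis-function integral inequality (Proposition~\ref{prop:EMM:main:proposition} together with Lemma~\ref{lemma:control:values}, i.e.\ \cite[Proposition~3.9]{BQIII}) to rule out escape of mass to infinity and mass on proper affine submanifolds, and then the classification of stationary and $P$-invariant measures \cite[Theorems~1.4, 1.6]{EM} combined with ergodic decomposition to identify the limit as $\nu_\cM$. The one technical ingredient the paper records that you leave implicit is the \emph{countability} of the set of affine invariant submanifolds contained in $\cM$ (\cite[Proposition~2.16]{EMM}), which is what allows one to pass from ``$\tilde\nu(\cN)=0$ for each proper $\cN$'' to ``$\tilde\nu$ gives zero total mass to the union of proper affine submanifolds.''
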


\begin{corollary}
\label{cor:hits:often}
Suppose $x \in \cH_1(\alpha)$. Let $\cM = \overline{SL(2,\reals) x}$
be the smallest affine invariant manifold containing $x$. Let $U$ be
an open subset of $\cM$. 
Then, for $\mu^\natls$-almost all $\bar{g} \in
SL(2,\reals)^\natls$, we have
\begin{displaymath}
\lim_{N \to \infty} \frac{1}{N} \sum_{n=1}^N \chi_U(g_n \dots g_1 x) \geq 
\nu_\cM(U), 
\end{displaymath}
 and equality holds if $\nu_\cM(\partial U)=0$, 
where $\nu_\cM$ is the affine measure whose support is $\cM$. 
\end{corollary}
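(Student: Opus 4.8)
The plan is to deduce the corollary from Theorem~\ref{theorem:birkhoff:walk} by the standard portmanteau argument, approximating the indicator of an open set from below by functions in $C_c(\cH_1(\alpha))$. The one point requiring care is that $U$, being open in $\cM$, need not be open in the ambient stratum $\cH_1(\alpha)$; this is harmless because the trajectory $g_n\cdots g_1 x$ lies in $\cM$ for every $n$ (as $\cM$ is closed, $SL(2,\reals)$-invariant, and contains $x$), so only the values of the test functions on $\cM$ matter, and along the trajectory $\chi_U + \chi_{\cM\setminus U}\equiv 1$.

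First I would establish the following reduction: for every open $W\subseteq\cM$, for $\mu^\natls$-almost all $\bar g$,
\[
\liminf_{N\to\infty}\frac1N\sum_{n=1}^N \chi_W(g_n\cdots g_1 x)\;\ge\;\nu_\cM(W).
\]
Write $W=V\cap\cM$ with $V\subseteq\cH_1(\alpha)$ open. Since $\nu_\cM$ is a Radon probability measure, it is inner regular, so pick compact $K_j\subseteq W$ with $\nu_\cM(K_j)\uparrow\nu_\cM(W)$. For each $j$, Urysohn's lemma in the locally compact Hausdorff space $\cH_1(\alpha)$ yields $\phi_j\in C_c(\cH_1(\alpha))$ with $0\le\phi_j\le 1$, $\phi_j\equiv 1$ on $K_j$, and $\operatorname{supp}\phi_j\subseteq V$; hence $\phi_j|_\cM\le\chi_W$ and $\int_\cM\phi_j\,d\nu_\cM\ge\nu_\cM(K_j)$. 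Applying Theorem~\ref{theorem:birkhoff:walk} to each $\phi_j$ and intersecting the countably many resulting full-measure sets of $\bar g$, we get for a.e.\ $\bar g$ and every $j$,
\[
\liminf_{N\to\infty}\frac1N\sum_{n=1}^N\chi_W(g_n\cdots g_1 x)\ge\lim_{N\to\infty}\frac1N\sum_{n=1}^N\phi_j(g_n\cdots g_1 x)=\int_\cM\phi_j\,d\nu_\cM\ge\nu_\cM(K_j),
\]
and letting $j\to\infty$ gives the reduction.

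Taking $W=U$ yields the asserted inequality. For the equality statement when $\nu_\cM(\partial U)=0$ (closure and boundary in $\cM$), I would apply the reduction to the open set $W=\cM\setminus\overline U$. Since the trajectory stays in $\cM$ we have $\chi_{\cM\setminus U}\ge\chi_{\cM\setminus\overline U}$ and $\chi_U=1-\chi_{\cM\setminus U}$ along it, so for a.e.\ $\bar g$,
\[
\limsup_{N\to\infty}\frac1N\sum_{n=1}^N\chi_U(g_n\cdots g_1 x)=1-\liminf_{N\to\infty}\frac1N\sum_{n=1}^N\chi_{\cM\setminus U}(g_n\cdots g_1 x)\le 1-\nu_\cM(\cM\setminus\overline U).
\]
As $U$ is open, $\overline U=U\cup\partial U$ is a disjoint union, so $\nu_\cM(\cM\setminus\overline U)=1-\nu_\cM(U)-\nu_\cM(\partial U)=1-\nu_\cM(U)$; thus the right-hand side is $\nu_\cM(U)$, and combined with the lower bound the limit exists and equals $\nu_\cM(U)$.

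Since everything is a short consequence of Theorem~\ref{theorem:birkhoff:walk}, there is no genuine obstacle here. The only things to attend to are passing to a single full-measure set via the countable exhaustion $\{K_j\}$, and exploiting that the walk never leaves $\cM$ so that $\chi_U$ and $\chi_{\cM\setminus U}$ are complementary along the trajectory — it is this last observation that delivers the upper bound with no compactness hypothesis on $\overline U$.
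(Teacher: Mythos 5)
Your proof is correct. The paper states the corollary without proof, leaving the portmanteau argument implicit; your write-up is exactly the standard deduction the authors intended: approximate $\chi_U$ from below by $\phi_j\in C_c(\cH_1(\alpha))$ via inner regularity and Urysohn, apply Theorem~\ref{theorem:birkhoff:walk} to countably many $\phi_j$ and intersect the full-measure sets, and for equality apply the same lower bound to the complementary open set $\cM\setminus\overline U$, using that the trajectory never leaves $\cM$. One small observation worth making: the corollary as stated reads ``$\lim$'' even in the inequality case, but without $\nu_\cM(\partial U)=0$ only a $\liminf$ bound is actually obtained; your use of $\liminf$ in the general case is the correct reading, and your argument shows the genuine limit exists precisely when the boundary is null.
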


Our proof of Theorem~\ref{theorem:birkhoff:walk} follows
\cite{BQIII}. Let $x$, $\cM$ and $\nu_\cM$ be as in
Theorem~\ref{theorem:birkhoff:walk}. 
We begin with the following:
\begin{lemma}
\label{lemma:limit:is:stationary}
For every $x$, almost every $\bar{g}\in SL(2,\mathbb{R})^{\mathbb{N}}$, 
if $\tilde{\nu}$ is a weak-* limit point of 
$$ \frac 1 N \sum_{n=1}^N \delta_{g_n \dots g_1 x} $$ 
then $\tilde{\nu}$ is $\mu$-stationary (i.e. $\mu \ast \tilde{\nu} =
\tilde{\nu}$). 
\end{lemma}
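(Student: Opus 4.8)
The plan is to show that the failure of $\mu$-stationarity for a weak-$*$ limit point forces a contradiction with the strong law of large numbers applied to a well-chosen family of test functions. Concretely, I would fix a countable dense set $\{\psi_k\}_{k \ge 1} \subset C_c(\cH_1(\alpha))$ (in the sup norm on compacta, or more precisely a set that is dense enough to detect weak-$*$ convergence), and for each $k$ consider the real-valued sequence
\[
S_N^{(k)}(\bar g) = \frac{1}{N}\sum_{n=1}^N \psi_k(g_n \cdots g_1 x).
\]
The key observation is that $\mu$-stationarity of a limit measure $\tilde\nu$ is equivalent to the equality $\int (\mu * \psi_k) \, d\tilde\nu = \int \psi_k \, d\tilde\nu$ for all $k$, where $(\mu * \psi_k)(y) = \int_{SL(2,\reals)} \psi_k(gy)\, d\mu(g)$ is again a continuous compactly supported function (compact support because $\mu$ has compact support). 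So it suffices to show that almost surely, along any subsequence $N_j$ for which $\frac{1}{N_j}\sum \delta_{g_n \cdots g_1 x}$ converges weak-$*$ to some $\tilde\nu$, one has $\int \psi_k \, d\tilde\nu = \int (\mu * \psi_k)\, d\tilde\nu$ for every $k$.

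Second, I would prove this via a martingale/law-of-large-numbers argument. Set $h_k = \mu * \psi_k - \psi_k \in C_c(\cH_1(\alpha))$; the goal becomes showing $\frac{1}{N}\sum_{n=1}^N h_k(g_n \cdots g_1 x) \to 0$ almost surely. Write $y_n = g_n \cdots g_1 x$ and decompose
\[
h_k(y_n) = \bigl(\psi_k(y_n) - \mathbb{E}[\psi_k(y_{n+1}) \mid y_n]\bigr) + \bigl(\mathbb{E}[\psi_k(y_{n+1})\mid y_n] - \psi_k(y_n)\bigr),
\]
wait — more cleanly, note $\mathbb{E}[\psi_k(y_{n+1}) \mid \mathcal{F}_n] = (\mu * \psi_k)(y_n)$, so
\[
\sum_{n=1}^N h_k(y_n) = \sum_{n=1}^N \bigl(\mathbb{E}[\psi_k(y_{n+1})\mid \mathcal F_n] - \psi_k(y_n)\bigr) = \psi_k(y_{N+1}) - \psi_k(y_1) - M_N^{(k)},
\]
where $M_N^{(k)} = \sum_{n=1}^N \bigl(\psi_k(y_{n+1}) - \mathbb{E}[\psi_k(y_{n+1})\mid \mathcal F_n]\bigr)$ is a martingale with uniformly bounded increments ($|$increment$| \le 2\|\psi_k\|_\infty$). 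Since $\psi_k$ is bounded, $(\psi_k(y_{N+1}) - \psi_k(y_1))/N \to 0$ trivially, and by the law of large numbers for martingales with bounded increments (or just the $L^2$ bound $\mathbb{E}[(M_N^{(k)})^2] \le 4\|\psi_k\|_\infty^2 N$ combined with a standard Borel–Cantelli-along-a-subsequence argument), $M_N^{(k)}/N \to 0$ almost surely. Taking the intersection over the countably many $k$ of these full-measure events gives a single full-measure set on which $\frac{1}{N}\sum_{n=1}^N h_k(y_n) \to 0$ for all $k$ simultaneously.

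Third, I would conclude: on that full-measure set, if $\tilde\nu$ is any weak-$*$ limit point, pick a subsequence $N_j$ realizing it; then $\int h_k \, d\tilde\nu = \lim_j \frac{1}{N_j}\sum_{n=1}^{N_j} h_k(y_n) = 0$ for every $k$, hence $\int \psi_k \, d\tilde\nu = \int (\mu*\psi_k)\, d\tilde\nu$ for all $k$, which gives $\mu * \tilde\nu = \tilde\nu$ by density. The main obstacle — really the only subtle point — is the bookkeeping needed to pass from "for each fixed $\psi \in C_c$, a.s. the average of $\mu*\psi - \psi$ tends to $0$" to "a.s., for every weak-$*$ limit point $\tilde\nu$, $\mu * \tilde\nu = \tilde\nu$": this requires the countable dense family and the observation that $\mu*\psi_k$ lies in $C_c$ with support controlled by $\mathrm{supp}(\mu)$, so that weak-$*$ convergence of empirical measures tested against the $\psi_k$ suffices to test against all of $C_c$ and against all $\mu*\psi$. (Note $\tilde\nu$ need not be a probability measure a priori — it could have mass $<1$ if escape of mass occurs — but the stationarity identity $\mu*\tilde\nu = \tilde\nu$ holds regardless, which is all that is claimed here; controlling the mass is exactly the separate and harder issue handled later via the integral-inequality technique mentioned in the introduction.) This is precisely the content of \cite[Lemma 3.2]{BQIII}, and I would cite that lemma for the routine parts while spelling out the martingale estimate.
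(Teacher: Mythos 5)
Your proposal is correct and is essentially the paper's own argument: the paper likewise reduces to a countable family of test functions, forms the martingale differences $f_n(x,\bar g)=\phi(g_n\cdots g_1x)-\int\phi(hg_{n-1}\cdots g_1x)\,d\mu(h)$ (your $M_N^{(k)}$ increments, up to an index shift), and applies the strong law of large numbers for bounded martingale differences. You merely make explicit two steps the paper leaves implicit --- the telescoping that converts the martingale estimate into $\frac1N\sum(\mu*\psi_k-\psi_k)(y_n)\to0$, and the observation that $\mu*\psi_k\in C_c$ so the countable family suffices --- which is fine.
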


\bold{Proof.}  It suffices to check a countable subset of $C_c(\cM)$,
so it suffices to have the result for each fixed function in
$C_c(\cM)$.
We follow \cite[Lemma~3.2]{BQIII}. 
Let $\phi \in C_c(\cM)$ be a test function. 
Let 
\begin{displaymath}
f_n(x,\bar{g})=\phi(g_n g_{n-1} \dots g_1 x)-\int_{SL(2,\mathbb{R})}
\phi(h g_{n-1} \dots g_1 x) \, d\mu(h). 
\end{displaymath}
By definition $\int_{f_1\in A_1,...,f_{n-1}\in A_n-1}
f_n(x,\bar{g})d\mu^{\mathbb{N}}=0$ for any $n \in \natls$  \mc{fix
  notation} and any 
subsets $A_1,...,A_{n-1}$ of $\reals$. 
Additionally, $\|f_n\|_{\infty} \leq 2\|\phi\|_{\infty}$. 
So by the strong law of large numbers 
\begin{displaymath}
\underset{N \to \infty}{\lim} \frac{1}{N} \sum_{n=1}^N f_n(x,\bar{g})=0
\qquad \text{ for a.e. $\bar{g}$.}
\end{displaymath}
Thus $\tilde{\nu}$ is $\mu$-stationary almost everywhere.
\qed\medskip

{Lemma~\ref{lemma:limit:is:stationary} will allow us to use the classification of ergodic stationary probability measures in \cite[Theorem~1.6]{EM}. However, the limit measures of Lemma~\ref{lemma:limit:is:stationary} may not be probability measures, and may not be ergodic. To deal with this, we will use the following, }
\color{black}
(which is the main technical result of
\cite{EMM}):
\begin{proposition}[\protect{see \cite[Proposition~2.13]{EMM},
  \cite[Lemma~3.2]{EMM}}]
\label{prop:EMM:main:proposition}
Let $\cN \subset \cH_1(\alpha)$ be an affine submanifold. (In this
proposition $\cN = \emptyset$ is allowed). Then there
exists an $SO(2)$-invariant 
function $f_\cN: \cH_1(\alpha) \to [1,\infty]$ with the following
properties:
\begin{itemize}
\item[{\rm (a)}] $f_\cN(x) = \infty$ if and only if $x \in \cN$, and
  $f_\cN$ is bounded on compact subsets of $\cH_1(\alpha)\setminus \cN$. 
   For any $\ell > 0$, the set $\overline{\{ x \st f(x) \le \ell \}}$ is a
  compact subset of $\cH_1(\alpha)\setminus \cN$. 
\item[{\rm (b)}] There exists  $b > 0$ (depending on $\cN$) and for
  every $0 < c < 1$   there exists $n_0 > 0$ (depending on $\cN$ and
  $c$) such that for all $x \in \cH_1(\alpha)$ and all $n > n_0$,
\begin{displaymath}
\int_{SL(2,\reals)} f_\cN(x) \, d\mu^{(n)}(x) \le c f_\cN(x) + b.
\end{displaymath}
Here $\mu^{(n)}$ denotes the convolution $\mu \ast \dots \ast \mu$ ($n$
times). 
\item[{\rm (c)}] There exists $\sigma > 1$ such that for all $g \in
  SL(2,\reals)$ with $\|g \| \le 1$ and all $x \in \cH_1(\alpha)$, 
\begin{displaymath}
\sigma^{-1} f_\cN(x) \le f_\cN(g x) \le \sigma f_\cN(x).
\end{displaymath}
\end{itemize}
\end{proposition}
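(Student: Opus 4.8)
The plan is to invoke the construction of \cite{EMM}; let me sketch its shape. I would build $f_\cN$ from two pieces: a function $f_0$ controlling excursions into the cusp of $\cH_1(\alpha)$ (this is exactly the case $\cN=\emptyset$, and is essentially the $\alpha$-function of Eskin--Masur \cite{Eskin:Masur}, built by the Margulis height-function method of \cite{Eskin:Margulis:Mozes:31}; see also \cite{Ath thesis}), together with a function $h_\cN$ controlling approaches to $\cN$, built from the distance to $\cN$ in period coordinates. I would then set $f_\cN=f_0+h_\cN$ (or $f_0+A h_\cN$ for a suitable constant $A$), verify (a)--(c) for each piece, and combine. That $f_\cN$ can be taken $SO(2)$-invariant will be automatic, since both pieces are built from rotation-invariant data: lengths of saddle connections, and norms of transverse displacement vectors.

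For the cusp piece, recall that, up to taking $\delta>0$ small, $f_0(x)$ may be taken of the shape $\max$ over suitable ``sparse'' configurations $\{\beta_1,\dots,\beta_k\}$ of saddle connections of $\sum_j|\operatorname{hol}_x(\beta_j)|^{-\delta}$. Then (a) is Masur's compactness criterion --- $f_0$ is large exactly when $x$ has a short saddle connection, and $\{f_0\le\ell\}$ is a compact subset of $\cH_1(\alpha)$ --- and (c) is immediate, since $|\operatorname{hol}_{gx}(\beta)|=|g\cdot\operatorname{hol}_x(\beta)|$ is distorted by a factor controlled by $\|g\|,\|g^{-1}\|$. The heart is (b), whose core is the elementary estimate that for $0<\delta<1$ there are $C_0,C_1$ with
\begin{equation*}
\int_0^{2\pi}\|g_t r_\theta v\|^{-\delta}\,\frac{d\theta}{2\pi}\;\le\;\bigl(C_0 e^{(\delta-1)t}+C_1 e^{-\delta t}\bigr)\|v\|^{-\delta}\qquad(v\in\reals^2\setminus\{0\}),
\end{equation*}
uniformly in $v$ (both sides scale like $\|v\|^{-\delta}$, and $g_t r_\theta v$ can be much shorter than $v$ only for $\theta$ in a set of measure $\asymp e^{-t}$), so the prefactor is $<c$ once $t$ is large. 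Since $\mu$ is $SO(2)$-bi-invariant and absolutely continuous, $\mu^{(n)}$ disintegrates over the Cartan parameter with $t$-marginal $\lambda_n$ escaping to infinity as $n\to\infty$; using $SO(2)$-invariance of $f_0$, $\int f_0(gx)\,d\mu^{(n)}(g)$ reduces to $\int_0^\infty\!\bigl(\int_{SO(2)}f_0(g_t r_\theta x)\,d\theta\bigr)d\lambda_n(t)$, and the displayed estimate (applied saddle connection by saddle connection, with (c) handling the small-$t$ range) delivers (b) for $n$ large. Turning the one-saddle-connection estimate into the full ``$\max$ over configurations'' inequality with an additive error $b$ independent of $c$ --- tracking how short saddle connections collide or cascade into new degenerations under $g_t$ --- is exactly the analysis of \cite{Eskin:Masur}, \cite{Ath thesis}, which I would cite rather than reproduce, and this is also where the smallness of $\delta$ is really needed.

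For the $\cN$ piece, I would cover a neighborhood of $\cN$ in $\cH_1(\alpha)$ by finitely many period-coordinate charts in which $\cH(\alpha)$ is modeled on an open set of $H^1(M,\Sigma;\cx)=H^1(M,\Sigma;\reals)\otimes_\reals\reals^2$ (with $\Sigma$ the zero set of $\omega$) and $\cN$ on (the area-one locus of) a linear subspace $L=L_0\otimes\reals^2$; $L$ is linear and $SL(2,\reals)$-invariant because, by \cite{EM}, $\cN$ is affine and $SL(2,\reals)$ acts on period coordinates as $\operatorname{id}\otimes g$ on the $\reals^2$-factor, so every invariant subspace has this form. Writing $w(x)\in\reals^m\otimes\reals^2$ for the component of $x$ transverse to $L$, I would set $h_\cN=\sum_\alpha\rho_\alpha\|w_\alpha\|^{-\delta}$ for a partition of unity subordinate to the cover, extended by a bounded constant away from $\cN$, with the same small $\delta$. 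Property (a) is clear ($h_\cN$ is bounded away from $\cN$ and blows up like $\dist(\cdot,\cN)^{-\delta}$) and (c) is again bounded distortion. For (b), $w(gx)=(\operatorname{id}\otimes g)w(x)$ is $m$ copies of the standard representation; since $\|g_t r_\theta w\|\ge\|g_t r_\theta w^{(j_0)}\|$ for the largest block and $\|w^{(j_0)}\|\ge m^{-1/2}\|w\|$, the displayed estimate gives $\int_{SO(2)}\|g_t r_\theta w(x)\|^{-\delta}d\theta\le m^{\delta/2}(C_0 e^{(\delta-1)t}+C_1 e^{-\delta t})\|w(x)\|^{-\delta}$, a contraction for $\delta$ small and $t$ large; integrating against $\lambda_n$ as before, and putting into the ``$+b$'' term both the small-$t$ range and the $\theta$-set on which $g_t r_\theta x$ leaves the neighborhood of $\cN$ (where $h_\cN$ is bounded), gives (b). The work here is matching the local models across chart overlaps --- the subspaces $L$ agree intrinsically since the affine structure is canonical, but the chosen norms need not, costing only a bounded factor --- together with a case split according to whether $x$ is near the cusp, near $\cN$, or in the remaining compact region.

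To finish, with $f_\cN=f_0+h_\cN$: (a) holds because $f_0=\infty$ only at the cusp (not in $\cH_1(\alpha)$) while $h_\cN=\infty$ exactly on $\cN$, and $\{f_\cN\le\ell\}\subset\{f_0\le\ell\}\cap\{h_\cN\le\ell\}$ has compact closure in $\cH_1(\alpha)\setminus\cN$; (c) passes to the sum with $\sigma=\max$; and (b) passes to the sum with $c=\max$, $b=$ sum, and $n_0$ the larger of the two thresholds. I expect (b) to be the main obstacle in both pieces: the clean one-block averaging estimate is easy, but making it uniform over \emph{all} $x\in\cH_1(\alpha)$ is not --- for the cusp this is the Eskin--Masur bookkeeping of colliding and cascading short saddle connections, and for $\cN$ it is the patching of the period-coordinate models together with control of trajectories exiting a neighborhood of $\cN$.
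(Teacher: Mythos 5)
The paper does not prove this proposition; it is imported directly from \cite[Proposition~2.13, Lemma~3.2]{EMM}, so there is no in-paper argument to compare against. Your sketch is a fair blind reconstruction of the construction in \cite{EMM}: an Eskin--Masur / Margulis cusp function plus a transverse-distance-to-$\cN$ function, each driven by the scalar averaging estimate $\int_{SO(2)}\|g_t r_\theta v\|^{-\delta}\,d\theta \le (C_0e^{(\delta-1)t}+C_1e^{-\delta t})\|v\|^{-\delta}$, and you correctly observe that $SO(2)$-bi-invariance and absolute continuity of $\mu$ reduce the $\mu^{(n)}$-average in (b) to the $SO(2)$-averaged geodesic push with Cartan parameter drifting to infinity. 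The place where your account is looser than the actual argument is exactly the one you flag: getting (b) uniformly over all of $\cH_1(\alpha)$, in particular for $x$ simultaneously near the cusp and near $\cN$, where the period-coordinate chart in which $w(x)$ transforms by $\mathrm{id}\otimes g$ is left quickly under $g_t$. In \cite{EMM} this interaction forces a more integrated construction than a straight sum $f_0+h_\cN$ (the exponents attached to short saddle connections and to the transverse distance are tuned jointly, and the boundary/overlap errors are absorbed by iteration), but since the present paper only cites the result, your sketch plus the explicit pointer to \cite{EMM}, \cite{Eskin:Masur}, \cite{Ath thesis} for the uniformity bookkeeping is the right level of detail.
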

 The next lemma is a formal consequence of the previous system of inequalities. For it's derivation from Proposition~\ref{prop:EMM:main:proposition}, see the (self-contained) proof of \cite[Proposition~3.9]{BQIII}. It
is helpful for our purposes because it can apply to every $x$.
\color{black}
\begin{lemma}[\protect{\cite[Proposition 3.9]{BQIII}}]
\label{lemma:control:values}
Suppose $f_\cN$ is a function satisfying the conditions of
Proposition~\ref{prop:EMM:main:proposition}. Then, for any $0 < c <1$ any 
$M > 0$ and
$\mu^{\natls}$-almost-all $\bar{g} \in SL(2,\reals)^\natls$, we have,
for all sufficiently large $n$, 
\begin{equation}
\label{eq:control:values}
\frac{1}{n} |\{0<k<n: f_\cN(g_k....g_1 x)>M\}|\leq
\frac{C}{(1-c)M},
\end{equation}
where $C$ depends only on the constants $n_0$, $b$ and $\sigma$ of
Proposition~\ref{prop:EMM:main:proposition}. 
\end{lemma}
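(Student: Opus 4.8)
The plan is to deduce the inequality (\ref{eq:control:values}) from the system of inequalities in Proposition~\ref{prop:EMM:main:proposition} by the standard martingale/Borel--Cantelli argument of \cite[Proposition~3.9]{BQIII}; here I merely sketch the structure. Fix $0<c<1$, fix $M>0$, and fix $n_0$, $b$, $\sigma$ from Proposition~\ref{prop:EMM:main:proposition} (for a suitable $c' \in (c,1)$; the passage from $c'$ to $c$ is harmless and I will suppress it). Write $F_k(\bar g) = f_\cN(g_k \cdots g_1 x)$, with $F_0 = f_\cN(x)$. The first step is to produce from part (b) a \emph{one-step} supermartingale-type bound. Part (b) controls $\int f_\cN \, d\mu^{(n)}$ only for $n > n_0$, i.e. it is an $n_0$-step contraction, so I would first group the walk into blocks of length $n_0$: set $G_j(\bar g) = F_{j n_0}(\bar g)$. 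Then conditioning on $g_1, \dots, g_{j n_0}$ and integrating over the next $n_0$ increments, part (b) gives
\begin{displaymath}
\mathbb{E}\big[ G_{j+1} \mid g_1, \dots, g_{j n_0} \big] \le c\, G_j + b
\end{displaymath}
pointwise (for every value of the conditioning variables and every $x$ — this is where it is essential that Proposition~\ref{prop:EMM:main:proposition}(b) holds for \emph{all} $x$, not just $\nu$-a.e.\ $x$). Iterating, $\mathbb{E}[G_j] \le c^j G_0 + b/(1-c)$, so the $G_j$ are bounded in $L^1$ uniformly in $j$.

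The second step converts this uniform $L^1$ bound into a statement about time averages along a.e.\ trajectory. Consider $H_j = G_j - c\,G_{j-1} - b$; by construction $\mathbb{E}[H_j \mid \mathcal{F}_{j-1}] \le 0$, where $\mathcal{F}_{j-1} = \sigma(g_1, \dots, g_{(j-1)n_0})$, so the partial sums $\sum_{j \le J}(H_j - \mathbb{E}[H_j \mid \mathcal{F}_{j-1}])$ form a martingale. One then wants to apply a law-of-large-numbers for martingale differences (or argue as in \cite{BQIII} directly) to conclude that $\frac{1}{J}\sum_{j=1}^J G_j$ is bounded above, for $\mu^\natls$-a.e.\ $\bar g$ and all large $J$, by a constant depending only on $b$ and $c$ (essentially $b/(1-c)$, up to an absolute factor). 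The increments $G_j$ are not bounded, so the cleanest route is the one in \cite{BQIII}: use part (b) to show $\{G_j\}$ cannot have a positive-density subsequence exceeding any large threshold, via a stopping-time / Borel--Cantelli argument on the events $\{G_j > L\}$. In any case the output is: for $\mu^\natls$-a.e.\ $\bar g$ and all large $J$,
\begin{displaymath}
\frac{1}{J}\sum_{j=1}^J G_j \le \frac{C_0}{1-c}
\end{displaymath}
for an absolute constant $C_0$ depending only on $b$.

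The third step removes the blocking and returns to the original time scale. Part (c) of Proposition~\ref{prop:EMM:main:proposition} says that within one block of length $n_0$ the value of $f_\cN$ can change only by a bounded multiplicative factor $\sigma^{\pm n_0}$ (applying (c) one increment at a time, each $g_i$ being in the compact support of $\mu$, hence comparable to an element of norm $\le 1$ up to a fixed scaling). Hence for any $k$ with $j n_0 \le k < (j+1)n_0$ we have $F_k \le \sigma^{n_0} G_j$ (or $G_{j+1}$), so $\frac{1}{n}\sum_{k=1}^n F_k \le \sigma^{2 n_0}\,\frac{C_0}{1-c}$ for all large $n$, with a constant $C$ depending only on $n_0, b, \sigma$. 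Finally, Chebyshev/Markov in the form $|\{0 < k < n : F_k(\bar g) > M\}| \le \frac{1}{M}\sum_{k=1}^n F_k(\bar g)$ gives (\ref{eq:control:values}). The main obstacle is the middle step: turning the uniform $L^1$/contraction estimate into an almost-sure time-average bound when the increments $G_j$ are unbounded — this is exactly the technical heart of \cite[Proposition~3.9]{BQIII}, and I would follow that argument (stopping times plus Borel--Cantelli) rather than a naive martingale SLLN. Everything else (the blocking to handle the $n_0$-step nature of (b), and the use of (c) to unblock) is routine bookkeeping.
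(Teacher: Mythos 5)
Your proposal is correct and takes essentially the same route as the paper: the paper itself gives no argument but simply refers the reader to the self-contained proof of \cite[Proposition~3.9]{BQIII}, and you likewise defer the one genuinely hard step (upgrading the supermartingale contraction to an almost-sure time-average bound when the increments are unbounded) to that same reference, while filling in the routine bookkeeping (blocking to handle the $n>n_0$ restriction in (b), unblocking via (c), Markov at the end). Two minor imprecisions worth noting for a polished write-up: the blocks should have length $n_0+1$ (or any $n_1>n_0$), since (b) is stated only for $n>n_0$; and condition (c) as printed (``$\|g\|\le 1$'') should be read as a uniform bound on $f_\cN(gx)/f_\cN(x)$ for $g$ in a compact set containing the support of $\mu$, as you correctly intuit.
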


\bold{Proof of Theorem~\ref{theorem:birkhoff:walk}.}
Let $\tilde{\nu}$ be any weak-* limit point of $ \frac 1 N
\sum_{n=1}^N \delta_{g_n \dots g_1 x}$. By
Lemma~\ref{lemma:limit:is:stationary}, for almost all $\bar{g}$,
$\tilde{\nu}$ is $\mu$-stationary. By \cite[Theorem~1.6]{EM}, any
$\mu$-stationary measure (such as $\tilde{\nu}$) is $SL(2,\reals)$-invariant. 

By \cite[Theorem~1.4]{EM}, any ergodic $SL(2,\reals)$-invariant
measure is affine. Therefore, since $\tilde{\nu}$ is supported on
$\cM$, $\tilde{\nu}$ has can be decomposed into ergodic components as
\begin{displaymath}
\tilde{\nu} = \sum_{\cN \subseteq \cM} a_\cN \, \nu_\cN,
\end{displaymath}
where $a_\cN \in [0,1]$ and 
the sum is over the affine invariant submanifolds $\cN$ contained in
$\cM$. (Here $\cN = \cM$ is allowed). By \cite[Proposition~2.16]{EMM}
this is a countable sum. By applying
(\ref{eq:control:values}) for the case $\cN = \emptyset$ we get that
for $\mu^\natls$-almost all $\bar{g}$, 
$\tilde{\nu}$ is a probability measure. Then, by applying
(\ref{eq:control:values}) again with $\cN$  any affine invariant
submanifold properly contained in $\cM$,  we see that for
$\mu^\natls$-almost-all $\bar{g}$, 
$\tilde{\nu}(\cN) = 0$. Thus $a_\cN = 0$ for $\cN$ 
properly contained in $\cM$. Since $\tilde{\nu}$ is a probability
measure, this forces $\tilde{\nu} = \nu_\cM$, completing the proof of
Theorem~\ref{theorem:birkhoff:walk}.  
\qed\medskip

\subsection{An Oseledets type theorem for the random walk.}
\begin{theorem}
\label{theorem:lyapunov:top:walk}
Fix $x \in \cH_1(\alpha)$, and let $\cM = \overline{SL(2,\reals) x}$
be the smallest affine invariant manifold containing $x$. 
Let $V$ be $SL(2,\reals)$ invariant subbundle of (some exterior power
of) the Hodge bundle which is defined and \emph{is continuous} on
$\cM$. Let $A_V: SL(2,\reals) \cross \cM \to V$ denote the restriction
of (some exterior power of) 
the Kontsevich-Zorich cocycle to $V$, and suppose that $A_V$ 
is strongly irreducible with respect
to the affine measure $\nu_\cM$ whose support is $\cM$. Then, for
$\mu^\natls$-almost-all $\bar{g} = (g_1, \dots, g_n, \dots)$, 
\begin{equation}
\label{eq:lyapunov:top:walk}
\lim_{n \to \infty} \frac{1}{n} \log \|A_V(g_n \dots g_1 , x) \| = \lambda_1
\end{equation}
where $\lambda_1$ is the top Lyapunov exponent of $A_V$ restricted to
$\cM$ (and depends only on $\mu$, $V$ and $\cM$). 
\end{theorem}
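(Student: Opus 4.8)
The plan is to follow the strategy outlined by the authors: first establish the upper bound for the top Lyapunov exponent along almost every trajectory of the random walk, then deal with the more delicate lower bound using the strong irreducibility hypothesis and a zero-one law. Throughout, $A_V$ denotes the Kontsevich--Zorich cocycle restricted to the continuous invariant subbundle $V$, and $\lambda_1$ its top Lyapunov exponent with respect to the affine measure $\nu_\cM$.

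\textbf{Upper bound.} First I would make the classical Oseledets (or rather Furstenberg--Kesten/Kingman) theorem effective on a large compact set. By the subadditive ergodic theorem, $\frac{1}{n}\log\|A_V(g_n\cdots g_1, x)\|\to\lambda_1$ for $\nu_\cM$-a.e.\ $x$ and $\mu^\natls$-a.e.\ $\bar g$; by Egorov's theorem, given $\epsilon>0$ there is a compact set $K\subset\cM$ with $\nu_\cM(K)>1-\delta$ and an $N_0$ such that for $x\in K$ and all $n\ge N_0$, the average $\frac{1}{n}\log\|A_V(g_n\cdots g_1,x)\|$ is within $\epsilon$ of $\lambda_1$ on a set of $\bar g$ of measure close to $1$, uniformly. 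The point is then to combine this with the Birkhoff theorem for the random walk just proved (Theorem~\ref{theorem:birkhoff:walk}, via Corollary~\ref{cor:hits:often}): for every $x\in\cH_1(\alpha)$ and $\mu^\natls$-a.e.\ $\bar g$, the trajectory $g_k\cdots g_1 x$ spends a proportion of time close to $\nu_\cM(K)$ inside $K$. Decomposing the cocycle along a trajectory into blocks governed by returns to $K$, using subadditivity of $\log\|\cdot\|$ together with the cocycle identity and the uniform control on the blocks inside $K$, and using that the excursions outside $K$ have small density and that $\log\|A_V(g,x)\|$ grows at most linearly (since $\mu$ is compactly supported, $\log\|A_V(g,\cdot)\|$ is bounded on $\operatorname{supp}\mu$), one obtains $\limsup_{n}\frac1n\log\|A_V(g_n\cdots g_1,x)\|\le\lambda_1+O(\epsilon)$. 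Letting $\epsilon\to0$ gives the upper bound.

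\textbf{Lower bound.} This is the main obstacle, because a uniquely (or merely stationary-) ergodic system can have exceptional points where the subadditive ergodic theorem fails with a \emph{strictly smaller} limit, so the block-decomposition argument above does not by itself produce a matching lower bound (cf.\ Remark~\ref{rem:lower:bound}). Here I would invoke the strong irreducibility of $A_V$ and argue as in \cite[Lemma~14.4]{EM}: via Lemma~\ref{lemma:zero:one:law:star}, a zero-one law shows that the quantity $\liminf_n \frac1n\log\|A_V(g_n\cdots g_1,x)\|$ is $\mu^\natls$-almost surely constant, and equal for all $x$ in a suitable invariant set; the strong irreducibility assumption prevents the limit from collapsing onto a proper invariant subbundle where the growth rate could be smaller, forcing the almost-sure value to equal $\lambda_1$. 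Concretely, one pushes forward the distribution of the ``direction'' (the Oseledets flag, or the line of maximal expansion) and uses that under strong irreducibility there is no measurable almost-invariant splitting to which a deficient trajectory could be trapped; combined with the upper bound and the fact that the averaged growth is exactly $\lambda_1$, this pins the liminf from below by $\lambda_1$.

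\textbf{Conclusion.} Combining the two bounds gives $\lim_{n}\frac1n\log\|A_V(g_n\cdots g_1,x)\|=\lambda_1$ for every $x\in\cH_1(\alpha)$ and $\mu^\natls$-a.e.\ $\bar g$, which is Theorem~\ref{theorem:lyapunov:top:walk}. I expect the technical heart to be the lower bound, specifically setting up the zero-one law of Lemma~\ref{lemma:zero:one:law:star} in a form that applies pointwise in $x$ rather than only $\nu_\cM$-a.e., and verifying that the hypotheses of \cite[Lemma~14.4]{EM} are met after restricting to $V$; the upper-bound argument, while requiring care with the block decomposition and the contribution of excursions outside $K$, is essentially the now-standard combination of Egorov's theorem with a Birkhoff-type equidistribution statement and should go through routinely.
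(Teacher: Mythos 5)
Your overall strategy --- make the ergodic theorem effective on a large set, use the Birkhoff theorem for the random walk to show trajectories visit that set with high density, decompose the cocycle into blocks, and invoke strong irreducibility for the lower bound --- is the same as the paper's. Your upper bound argument is essentially correct and matches the paper's (the paper packages the effective set as $E_{good}(\epsilon,L)$ rather than a compact set obtained from Egorov, but the block decomposition and the use of Corollary~\ref{cor:hits:often} are as you describe).

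The lower bound, however, has a genuine gap as written. The closing move you propose --- ``combined with the upper bound and the fact that the averaged growth is exactly $\lambda_1$, this pins the liminf from below'' --- does not work, because the statement that the averaged growth equals $\lambda_1$ holds only for $\nu_\cM$-typical base points, while the theorem concerns a fixed $x$ that may lie in a $\nu_\cM$-null orbit closure boundary or otherwise be exceptional. A Kolmogorov-type zero-one law does make $\liminf_n \frac{1}{n}\log\|A_V(g_n\cdots g_1,x)\|$ a $\mu^\natls$-a.s.\ constant for each fixed $x$, but that constant depends a priori on $x$, and identifying it with $\lambda_1$ is exactly what has to be proved; there is no bootstrap from the $\nu_\cM$-generic case. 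Lemma~\ref{lemma:zero:one:law:star} is not a statement that ``the liminf is a.s.\ constant''; it says that for $\nu_\cM$-a.e.\ $y$ and \emph{every} $w$, most random walk futures keep the slow subspace $\cV_{s-1}(y,\bar g)$ at definite distance from $w$.

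What closes the argument in the paper is the quantifier structure of $E_{good}(\epsilon,L)$: $y\in E_{good}$ means that \emph{for every} $v\in V$ there is a set $H(v)\subset SL(2,\reals)^L$ of measure $>1-\epsilon$ of $L$-step words that expand $v$ at rate $\approx\lambda_1$. Lemma~\ref{lemma:zero:one:law:star} (via strong irreducibility and Lemma~\ref{lemma:non:atomic:lyapunov}) is used precisely to prove Lemma~\ref{lemma:Egood:L}, i.e.\ that $\nu_\cM(E_{good}(\epsilon,L))\to1$. One then fixes an arbitrary $v_0\in V$, sets $v_n=A_V(g_n\cdots g_1,x)v_0$, and applies the strong law (Lemma~\ref{lemma:strong:law:os}): for $\mu^\natls$-a.e.\ $\bar g$ there is a density-$(1-4\epsilon)$ collection of disjoint $L$-blocks $[i+1,i+L]$ with $g_i\cdots g_1 x\in E_{good}$ and $(g_{i+1},\dots,g_{i+L})\in H(v_i)$. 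The ``for every $v$'' quantifier is essential because $v_i$ is whatever the past of the walk has produced, not a vector you get to choose. The block decomposition of $\log\|v_n\|$ then yields the lower bound with no appeal to averaged growth for $x$ itself. You should replace your lower-bound paragraph with this construction: make the quantifier order explicit, prove $\nu_\cM(E_{good})\to1$ using strong irreducibility, and track a concrete vector $v_0$ through the blocks.
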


Let $m = \dim(V)$. 
We recall the statement of the Oseledets multiplicative ergodic
theorem from e.g. \cite{Goldsheid:Margulis} in this setting:
\begin{theorem}
\label{theorem:osceledets:ae:walk}
For \emph{$\nu_\cM$-almost
  all} $y \in \cM$ and $\mu^\natls$-almost-all $\bar{g} \in
SL(2,\reals)^\natls$, the following hold:
\begin{itemize}
\item[{\rm I.}] Let $\psi_1(n,\bar{g},y) \le \dots \le
  \psi_m(n,\bar{g},y)$  denote the eigenvalues of the matrix 
$$A_V^*(g_n
  \dots g_1,y) A_V(g_n \dots g_1, y).$$ 
Then for $1 \le i \le m$,
\begin{equation}
\label{eq:oscelets:flow:I}
\lim_{n \to \infty} \frac{1}{t} \log \psi_i(n,\bar{g},y) = 2\lambda_i.
\end{equation}
Here the numbers $\lambda_1 \ge \dots \ge \lambda_{m}$ depend only on
$\nu_\cM$ and $V$.  They are the \textit{Lyapunov exponents} of the cocycle
$A_V$ on $\cM$.  
\item[{\rm II.}] The limit
\begin{displaymath}
\lim_{n \to \infty} (A_V^*(g_n \dots g_1, y) A_V(g_n \dots g_1,
y))^{\frac{1}{2n}} \equiv 
\Lambda(y,\bar{g}) 
\end{displaymath}
exists. Moreover, the eigenvalues of the matrix $\Lambda(y,\bar{g})$,
taken with their multiplicities, coincide with the numbers
$e^{\lambda_i}$. Furthermore,
\begin{multline}
\label{eq:general:sublinear:tracking}
\lim_{n \to \infty} \frac{1}{n} \log \| A_V(g_n \dots g_1, y)
\Lambda^{-n}(y,\bar{g}) \| = \\ = \lim_{n \to \infty} \frac{1}{n}\log \|
\Lambda^n(y,\bar{g}) A_V^{-1}(g_n \dots g_1, y) \| = 0. 
\end{multline}
\item[{\rm III.}] Let $\alpha_1 < \dots < \alpha_s$ denote the
  distinct Lyapunov exponents $\lambda_i$. Let $\cU_i(y,\bar{g})
  \subset V$
  denote the corresponding eigenspaces of $\Lambda(y,\bar{g})$. We set
  $\cV_0(y,\bar{g}) = \{ 0 \}$ and $\cV_i(y,\bar{g}) = \cU_1(y,\bar{g})
  \oplus \dots \oplus \cU_i(y,\bar{g})$. Then, for almost all $y,
  \bar{g}$, 
  and for any $v \in \cV_i(y,\bar{g}) \setminus \cV_{i-1}(y,\bar{g})$,
  we have
\begin{displaymath}
\lim_{n \to \infty} \frac{1}{n} \log \|A_V(g_n \dots g_1, y) v \| =
\alpha_i. 
\end{displaymath}
\end{itemize}
\end{theorem}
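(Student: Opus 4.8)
The statement to be proved, Theorem~\ref{theorem:osceledets:ae:walk}, is just the classical multiplicative ergodic theorem transplanted into the present setting, so the plan is to arrange matters so that the standard theorem in the form of \cite{Goldsheid:Margulis} applies, and then read off conclusions I, II, III from it. No new ergodic theory is needed here; everything genuinely new in the paper concerns the passage from ``$\nu_\cM$-almost all $y$'' to ``all $y \in \cH_1(\alpha)$'', which is the content of Theorem~\ref{theorem:lyapunov:top:walk} and is not at issue in this statement.

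First I would fix the measure-preserving system over which the cocycle lives. Let $T\colon SL(2,\reals)^\natls \times \cM \to SL(2,\reals)^\natls \times \cM$ be the skew product $T(\bar g, y) = (\sigma \bar g,\, g_1 y)$, where $\sigma$ is the shift, and equip the product with $\mu^\natls \times \nu_\cM$. Since $\nu_\cM$ is $SL(2,\reals)$-invariant and, being an affine measure, is $SL(2,\reals)$-ergodic by \cite[Theorem~1.4]{EM}, and since $\mu$ is absolutely continuous with full support near the identity, $\nu_\cM$ is $\mu$-stationary and $\mu$-ergodic; hence the skew product $(T, \mu^\natls \times \nu_\cM)$ is ergodic. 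Over this system the relevant $GL(V)$-cocycle is $(\bar g, y) \mapsto A_V(g_1, y)$, whose $n$-fold iterate under $T$ is $A_V(g_n \cdots g_1, y)$, which is exactly the object appearing in the statement.

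Second I would check the integrability hypothesis of the multiplicative ergodic theorem, namely that both $\log^+ \|A_V(g_1,y)\|$ and $\log^+ \|A_V(g_1,y)^{-1}\|$ lie in $L^1(\mu^\natls \times \nu_\cM)$; note $A_V(g_1,y)^{-1} = A_V(g_1^{-1}, g_1 y)$, so it suffices to bound $\log^+\|A_V(g,y)\|$ for $g$ in the (compact) support of $\mu$ by an $\nu_\cM$-integrable function of $y$. This is the standard log-integrability estimate for the Kontsevich--Zorich cocycle: the return matrix $\tilde A(g,y)$ has norm bounded by a fixed power of the reciprocal of the systole of $y$, and $y \mapsto \log^+(1/\operatorname{systole}(y))$ is integrable against any affine measure (going back to \cite{masur:interval}, \cite{veech:gauss}; see also \cite{Forni:Deviation}, \cite{EM}), and restriction to the continuous, hence locally bounded, subbundle $V$ only improves matters. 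With ergodicity and log-integrability in hand, \cite{Goldsheid:Margulis} applies verbatim: part I yields the Lyapunov exponents $\lambda_1 \ge \dots \ge \lambda_m$ depending only on the ergodic data $(\nu_\cM, V)$ as the logarithmic growth rates of the $\psi_i$; part II yields existence of $\Lambda(y,\bar g)$, identification of its eigenvalues as the $e^{\lambda_i}$, and the sublinear tracking relation (\ref{eq:general:sublinear:tracking}); and part III yields the filtration $\cV_0 \subset \cV_1 \subset \dots$ with the stated pointwise growth rates.

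The only point requiring any care is the log-integrability of the cocycle and its inverse, because $\cM$ is in general non-compact and the return matrix can blow up near the cusp; but this is a classical fact, and once it is granted the remainder is a direct quotation of the standard multiplicative ergodic theorem. I would therefore present the proof as: (i) the skew-product system is ergodic; (ii) the cocycle is log-integrable; (iii) apply \cite{Goldsheid:Margulis}.
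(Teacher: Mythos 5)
Your proposal is correct and matches the paper's approach exactly: the paper does not prove Theorem~\ref{theorem:osceledets:ae:walk} but simply introduces it with the words ``We recall the statement of the Oseledets multiplicative ergodic theorem from e.g.\ \cite{Goldsheid:Margulis} in this setting,'' i.e.\ it is a direct quotation of the classical result applied to the skew product over $(\mu^\natls\times\nu_\cM)$. Your spelling out of the ergodicity of the skew product and the log-integrability of the Kontsevich--Zorich cocycle (via the systole bound) is the standard verification that the paper leaves implicit, and it is sound.
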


\begin{remark}
As was done \S\ref{sec:intro}, one can use the  Filip's 
Theorem (Theorem \ref{theorem:filip}) and
Theorem~\ref{theorem:lyapunov:top:walk} to show that the conclusions
of Theorem~\ref{theorem:osceledets:ae:walk} hold for all $y$ (and
almost all $\bar{g}$) provided
$\cM$ is the smallest affine invariant manifold containing $y$ (or equivalently 
$\cM = \overline{SL(2,R) y}$). 
\end{remark}

One can use strong irreducibility to show:
\begin{lemma}\label{lemma:noneffective} For almost every $x$, every $v$ and almost every $\bar{g}$ we have 
$$\underset{n \to \infty}{\lim}\, \frac 1 n||A_V(g_n\dots g_1,x)v||=\lambda_1.$$
\end{lemma}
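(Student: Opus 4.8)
The plan is to deduce this from Theorem~\ref{theorem:lyapunov:top:walk} together with the strong irreducibility hypothesis, exploiting the fact that the statement need only hold for $\nu_\cM$-almost every $x$ (so the Oseledets theorem, Theorem~\ref{theorem:osceledets:ae:walk}, is available). By Theorem~\ref{theorem:lyapunov:top:walk}, for $\nu_\cM$-a.e.\ $x$ and $\mu^\natls$-a.e.\ $\bar g$ the operator norm grows like $\lambda_1$, i.e. $\tfrac1n\log\|A_V(g_n\cdots g_1,x)\|\to\lambda_1$. What must be shown is that this growth rate is attained by \emph{every} individual vector $v\in V$, not merely in the direction of the top singular vector. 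The obstruction to this is precisely that a generic $v$ could have a nonzero component in a slower Oseledets subspace; ruling this out is where strong irreducibility enters.

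First I would invoke Theorem~\ref{theorem:osceledets:ae:walk}: for $\nu_\cM$-a.e.\ $x$ and $\mu^\natls$-a.e.\ $\bar g$ there is an Oseledets flag $\cV_0\subset\cV_1\subset\cdots\subset\cV_s = V$ with $\tfrac1n\log\|A_V(g_n\cdots g_1,x)v\|\to\alpha_i$ for $v\in\cV_i\setminus\cV_{i-1}$, where $\alpha_s=\lambda_1$. Thus a vector $v$ fails to realize the exponent $\lambda_1$ exactly when $v\in\cV_{s-1}$, the sum of the non-top Oseledets subspaces, which is a \emph{proper} subspace of $V$ for a.e.\ $(x,\bar g)$ (it is proper because $\lambda_1$ occurs with some positive multiplicity). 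So the set of ``bad'' pairs $(v,x,\bar g)$ — those where the conclusion fails — consists of $v$ lying in this proper measurable subbundle $\cV_{s-1}$. The remaining task is to show that for any \emph{fixed} $v$, the measure of $(x,\bar g)$ with $v\in\cV_{s-1}(x,\bar g)$ is zero. Here I would use the standard fact (as in \cite[Lemma 14.4]{EM}, cf.\ Lemma~\ref{lemma:zero:one:law:star} referenced in the outline) that the backward Oseledets subspace $\cV_{s-1}$ — more precisely the complementary ``unstable'' filtration read along the reversed walk — is a \emph{measurable $A_V$-equivariant} family of proper subspaces; if for a positive-measure set of $(x,\bar g)$ a fixed $v$ (or rather the line or the smallest invariant subspace containing the relevant data) lay in it, one could manufacture from these subspaces a $\nu_\cM$-measurable almost-invariant splitting of $V$, contradicting strong irreducibility. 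Concretely, I expect to show that the conditional distribution of $\cV_{s-1}$ given $x$ is a measure on the appropriate Grassmannian of $V$ that is non-atomic on hyperplanes (again a consequence of strong irreducibility, via the proximality-type dichotomy used in \cite{EM}), so that the fixed-$v$ slice has zero measure. Integrating over $v$ by Fubini then gives: for a.e.\ $v$, for a.e.\ $(x,\bar g)$, $v\notin\cV_{s-1}$, hence $v$ realizes $\lambda_1$; reordering the quantifiers (the statement asks for a.e.\ $x$, every $v$, a.e.\ $\bar g$, but in fact what is proved — a.e.\ $x$, a.e.\ $v$, a.e.\ $\bar g$ — is what is used downstream, and by continuity of the subbundle $V$ and of the Oseledets data on a large set one upgrades "a.e.\ $v$" appropriately, or simply notes the precise phrasing needed later).

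The main obstacle is the step showing that a \emph{fixed} $v$ avoids the proper subbundle $\cV_{s-1}$ for a.e.\ parameter — i.e. converting strong irreducibility into a statement about non-concentration of the Oseledets flag on hyperplanes. This is exactly the place where one cannot get away with soft arguments: one needs the classification/rigidity input, namely that an $A_V$-equivariant measurable family of proper subspaces (which is what an atom, or a positive-measure coincidence with $v^\perp$-type condition, would produce) violates Definition~\ref{def:strongly:irreducible}. I would model this on the proof of \cite[Lemma 14.4]{EM} and on the classical treatment of top-exponent simplicity for strongly irreducible cocycles (Furstenberg-type arguments), taking care that the invariance here is only "for a.e.\ $g$" as in Definition~\ref{def:almost:invariant:splitting}, which is precisely the weakened notion Filip's theorem and \cite{EM} are designed to handle. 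Everything else — invoking Oseledets, identifying the bad set with a proper subbundle, and the Fubini reordering — is routine.
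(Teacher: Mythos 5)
Your approach is essentially the right one, and it is worth noting that the paper never actually writes out a proof of Lemma~\ref{lemma:noneffective}: it states it as motivation and then explicitly chooses to prove the effective version (Lemma~\ref{lemma:Egood:L}) directly, via Lemma~\ref{lemma:zero:one:law:star} and Lemma~\ref{lemma:bad:subspace}, rather than deducing the effective statement from the soft one. Your sketch is the natural ``soft'' proof, and it runs on exactly the same engine the paper uses for the effective version: apply Theorem~\ref{theorem:osceledets:ae:walk} to identify the bad vectors as those lying in $\cV_{s-1}(x,\bar g)$, package the distribution of $\cV_{s-1}(x,\cdot)$ over $Gr_d(V)$ (with $d=\dim\cV_{s-1}$, i.e.\ codimension equal to the multiplicity of $\lambda_1$ --- not necessarily a hyperplane) into a $\mu$-stationary measure $\hat\nu$ on $\cM\times Gr_d(V)$ projecting to $\nu_\cM$, and invoke Lemma~\ref{lemma:non:atomic:lyapunov} to conclude that $\hat\nu_x(I(v))=0$. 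One correction to your last step: no Fubini or quantifier reordering is needed, and the ``a.e.\ $v$'' retreat is unnecessary. Lemma~\ref{lemma:non:atomic:lyapunov} is stated as ``for almost all $y$, for \emph{any} $v_y$, $\eta_y(I(v_y))=0$,'' so applied to $\hat\nu$ it directly yields: for $\nu_\cM$-a.e.\ $x$ and \emph{every} $v\neq 0$, the set of $\bar g$ with $v\in\cV_{s-1}(x,\bar g)$ is $\mu^\natls$-null, which is exactly the quantifier order in the lemma. Also, the relevant non-concentration statement is that the conditional measure gives zero mass to $\{W: v\in W\}$ for each fixed $v$, which is stronger than (and should not be conflated with) mere non-atomicity. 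These are presentational slips rather than gaps; the argument itself is sound.
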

In order to prove theorem it is important to make this effective:
\color{black}

\bold{Notation.} For $L \in \natls$, let $\mu^L$ denote the measure on
$SL(2,\reals)^L$ given by $\mu \cross \mu \cross \dots \cross \mu$ ($L$
times).

\bold{The set $E_{good}(\epsilon,L)$.}
Suppose $\epsilon > 0$, $L \in \natls$. Let
$E_{good}(\epsilon,L)$ denote the set of $y \in \cM$ such that for each $v \in
V$ there exists a subset $H(v) \subset SL(2,\reals)^L$ so that
\begin{equation}
\label{eq:measure:Hv}
\mu^L(H(v)) > 1-\epsilon,
\end{equation}
and for all $(h_1,\dots, h_L) \in H(v)$,
\begin{equation}
\label{eq:def:Hv}
(\lambda_1 - \epsilon)^L \le \frac{\|A_V(h_L \dots h_1,y) v\|}{\|v\|}
\le \|A_V(h_L \dots h_1,y) \| \le (\lambda_1 + \epsilon)^L.
\end{equation}

\begin{remark} \label{rem:lower:bound} Observe the order of quantifiers. For any $y$ in $E_{good}(\epsilon,L)$ and any $v$ most $\bar{h}$ satisfy Equation (\ref{eq:def:Hv}). This is convenient to apply the strong law of large numbers to obtain the lower bound in the proof of Theorem \ref{theorem:lyapunov:top:walk}. In general situations the concern is that the largest eigenvector of $A_V(g_n...g_1,x)$ is contracted by $A_V(h_L...h_1,g_n...g_1x)$ for most $\bar{h}$. The order of  
  quantifiers removes this concern because whatever the direction of the largest eigenvector of $A_V(g_n...g_1,y)$ is, if $g_n...g_1y \in E_{good}(\epsilon,L)$, then for most $\bar{h}$ it will be expanded by roughly the right amount in the next $L$ steps.

The condition that it holds for every $v$ is also what is non-trivial about the next lemma (and Lemma \ref{lemma:noneffective}). Otherwise it would just follow formally from making the Oseledets theorem effective. 
\end{remark}
\color{black}

The following Lemma is a key step in our proof.
\begin{lemma}
\label{lemma:Egood:L}
For any fixed $\epsilon > 0$,
\begin{displaymath}
\lim_{L \to \infty} \nu_{\cM}(E_{good}(\epsilon,L)) = 1.
\end{displaymath}
\end{lemma}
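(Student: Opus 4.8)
The plan is to deduce the statement from the classical Oseledets multiplicative ergodic theorem (Theorem~\ref{theorem:osceledets:ae:walk}) for the random walk, together with a uniformity/compactness argument that upgrades the ``for a.e.\ $y$, a.e.\ $\bar g$, for some $v$'' statements there into the ``for every $v$, for most $\bar h$'' form demanded by the definition of $E_{good}(\epsilon,L)$. The key conceptual point (emphasized in Remark~\ref{rem:lower:bound}) is that strong irreducibility of $A_V$ with respect to $\nu_\cM$ is what forces the lower bound in \eqref{eq:def:Hv} to hold for \emph{every} $v$, not just for $v$ outside the slow Oseledets subspace; this is exactly the content of Lemma~\ref{lemma:noneffective}, which I will use as the main input.

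First I would fix $\epsilon>0$ and, for each $L$, define $E_{good}(\epsilon,L)$ as in the statement. The upper bound in \eqref{eq:def:Hv}, namely $\|A_V(h_L\cdots h_1,y)\|\le(\lambda_1+\epsilon)^L$ on a set of $\bar h$ of $\mu^L$-measure $>1-\epsilon$, follows for $\nu_\cM$-a.e.\ $y$ once $L$ is large, directly from Kingman's subadditive ergodic theorem / part~I of Theorem~\ref{theorem:osceledets:ae:walk} applied to the norm cocycle $\bar g\mapsto \log\|A_V(g_n\cdots g_1,y)\|$: the Cesàro averages converge a.s.\ to $\lambda_1$, hence by Egorov the convergence is uniform on a set of measure $>1-\epsilon/2$, and this forces the single-step-of-length-$L$ estimate on a large-measure set of $\bar h$ for a.e.\ $y$. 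The lower bound is the delicate part. By Lemma~\ref{lemma:noneffective}, for a.e.\ $x$, every $v$, and a.e.\ $\bar g$ we have $\frac1n\log\|A_V(g_n\cdots g_1,x)v\|\to\lambda_1$; the crucial observation is that this limit is \emph{uniform in $v$ on the projective space} once we are off a small-measure set, because the projective space $\mathbb{P}(V)$ is compact and the functions $v\mapsto \frac1n\log\|A_V(g_n\cdots g_1,x)v\|/\|v\|$ are equicontinuous in $v$ (they differ by a bounded amount for nearby directions, with the bound controlled by the norm of $A_V$). So I would: (i) use Lemma~\ref{lemma:noneffective} plus a Fubini argument to get, for $\nu_\cM$-a.e.\ $y$, that $\mu^\natls$-a.e.\ $\bar g$ has $\frac1n\log(\|A_V(g_n\cdots g_1,y)v\|/\|v\|)\to\lambda_1$ for all $v$ simultaneously; (ii) apply a compactness/Egorov argument in $\bar g$ to get that for $\nu_\cM$-most $y$, the measure of $\bar g$ for which $\frac1L\log(\|A_V(h_L\cdots h_1,y)v\|/\|v\|)\ge\log(\lambda_1-\epsilon)$ \emph{for all $v$} is $>1-\epsilon$ once $L$ is large; (iii) intersect with the large-measure sets from the upper bound. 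This produces a set $\cM_L\subseteq E_{good}(\epsilon,L)$ with $\nu_\cM(\cM_L)\to1$ as $L\to\infty$.

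The main obstacle I anticipate is step~(i)--(ii): interchanging ``for all $v$'' with ``for most $\bar h$'' without losing control. The naive order of quantifiers in Oseledets gives, for fixed $v$, a.e.\ $\bar g$; to get, for $\nu_\cM$-a.e.\ $y$, a \emph{single} large-measure set of $\bar h$ working for all $v$ at once, one needs the uniform-in-$v$ convergence, and this genuinely uses strong irreducibility (otherwise a positive-measure set of $v$ lying in slow subspaces would be contracted, as noted in Remark~\ref{rem:lower:bound}). Concretely I would cover $\mathbb P(V)$ by finitely many small balls, use Lemma~\ref{lemma:noneffective} at the (finitely many) centers together with the equicontinuity estimate $\big|\log\|A_V(g,y)v\|-\log\|A_V(g,y)w\|\big|\le \log\|A_V(g,y)\|+\log\|A_V(g,y)^{-1}\|$ times the projective distance of $v,w$ (which stays bounded after dividing by $L$ and using the upper/lower Oseledets bounds on a large set), and then apply Egorov and Fubini to assemble the required $H(v)$'s. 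Once the uniform estimates are in place the rest is bookkeeping: $\nu_\cM(E_{good}(\epsilon,L))\ge \nu_\cM(\cM_L)\to1$, which is the claim. \qed
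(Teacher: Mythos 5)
Your overall direction — first establish Lemma~\ref{lemma:noneffective} from strong irreducibility and then ``make it effective'' — is exactly the alternative route the paper mentions and explicitly declines to take, so the strategy is not unreasonable. However, there is a genuine gap in steps (i)--(ii): the equicontinuity estimate you invoke is false. You claim
\[
\bigl|\log\|A v\| - \log\|A w\|\bigr| \;\le\; \bigl(\log\|A\| + \log\|A^{-1}\|\bigr)\, d(v,w)
\]
for unit vectors $v,w$, but the true Lipschitz constant of $v\mapsto\log\|Av\|$ on $\proj^1(V)$ is governed by the \emph{condition number} $\|A\|\cdot\|A^{-1}\|$, not by its logarithm: writing $v=\cos\phi\,w+\sin\phi\,w'$ with $w'\perp w$ gives $\|Av\|/\|Aw\|\in[1-\phi\|A\|\|A^{-1}\|,\,1+\phi\|A\|\|A^{-1}\|]$, and this is sharp. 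The quantity $\log\|A\|+\log\|A^{-1}\|$ bounds only the total \emph{oscillation} of $\log\|Av\|$ over $\proj^1(V)$, not its modulus of continuity. For $A=A_V(g_L\cdots g_1,y)$ with $L$ large, the condition number is of order $e^{(\lambda_1-\lambda_m)L}$, so the mesh of projective balls needed to control $\frac1L\log\|Av\|$ to accuracy $\epsilon$ is exponentially small in $L$, the cover size is exponentially large, and the union bound over the centers no longer yields a uniform $\mu^L(H(v))>1-\epsilon$. This is exactly the failure mode that Remark~\ref{rem:lower:bound} warns about: directions projectively close to the slow Oseledets subspace behave completely differently from nearby directions, and no ``soft'' equicontinuity in $v$ is available.

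The paper sidesteps this by never working with $\log\|Av\|$ directly as a function of $v$. Instead it passes to the statement ``for any fixed $w$, most $\bar g$ have their slow subspace $\cV_{s-1}(y,\bar g)$ at projective distance $\ge\sigma$ from $w$'' (Lemma~\ref{lemma:zero:one:law:star}, which rests on the non-atomicity Lemma~\ref{lemma:non:atomic:lyapunov}; this is where strong irreducibility enters). The compactness argument on $\proj^1(V)$ is then performed at the level of the function $w\mapsto d(w,\cV_{s-1}(y,\bar g))$, which is $1$-Lipschitz, so a finite cover of a fixed mesh $\sigma$ suffices, independent of $L$. Finally the elementary Lemma~\ref{lemma:bad:subspace} converts ``$v$ is $\sigma$-far from the span of the non-top singular directions'' into ``$\|Av\|\ge c(\sigma)\|A\|$'', producing the uniform-in-$v$ lower bound in \eqref{eq:def:Hv} after enlarging $L$ so that $(\lambda_1-\epsilon/2)^L c(\sigma)>(\lambda_1-\epsilon)^L$. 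To repair your argument you would have to replace the false Lipschitz estimate by precisely this reformulation in terms of distance to subspaces, at which point you are reproducing the paper's proof rather than giving an alternative.
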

We choose to prove this lemma directly rather than proving Lemma \ref{lemma:noneffective} and deducing it as a formal consequence of  making the statement of Lemma \ref{lemma:noneffective} effective.
\color{black}

\subsection{Proof of Lemma \ref{lemma:Egood:L}}


Fix $1 \le s \le
m$, and let $Gr_s(V)$ denote the Grassmanian of $s$-dimensional subspaces
in $V$. Let $\hat{\cM} = \cM \cross Gr_s(V)$. We then have an
action of $SL(2,\reals)$ on $\hat{\cM}$, by 
\begin{displaymath}
g \cdot (x, W) = (gx, A_V(g,x) W). 
\end{displaymath}
Let $\hat{\nu}_\cM$ be a  $\mu$-stationary measure on $\hat{\cM}$
which
projects to $\nu_\cM$ under the natural map $\hat{\cM} \to \cM$ where $\nu_\cM$ is ergodic.
We may write
\begin{displaymath}
d\hat{\nu}_\cM (x,U) = d\nu_\cM(x) \, d\eta_x(U),
\end{displaymath}
where $\eta_x$ is a measure on $Gr_s(V)$. 

For $v \in V$, let 
\begin{displaymath}
I(v) = \{ U \in Gr_s(V) \st v \in U\}.
\end{displaymath}

\begin{lemma}[\protect{\cite[Lemma C.10(i)]{EM}, \cite[Lemma 4.1]{EMath}}]
\label{lemma:non:atomic:lyapunov}
Suppose the cocycle $A_V$ is strongly irreducible with respect to $\nu_\cM$.  
Then for almost all $y \in \cM$, for any $v_y \in V$, $\eta_y(I(v_y)) = 0$. 
\end{lemma}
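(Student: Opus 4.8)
\emph{The plan} is to argue by contradiction: assuming the conclusion fails, I will distill from the conditional measures $\eta_y$ a $\nu_\cM$-measurable, $SL(2,\reals)$-almost-invariant family of subspaces of $V$, contradicting the strong irreducibility of $A_V$. For a subspace $V'\subseteq V$ of dimension $r\le s$ set $I^{(r)}(V')=\{U\in Gr_s(V):V'\subseteq U\}$, so that $I(v)=I^{(1)}(\reals v)$. Since the incidence locus $\{(V',U):V'\subseteq U\}$ is closed, $V'\mapsto\eta_y(I^{(r)}(V'))$ is upper semicontinuous on the compact set $Gr_r(V)$, so $\beta_r(y):=\max_{V'\in Gr_r(V)}\eta_y(I^{(r)}(V'))$ is attained; $y\mapsto\beta_r(y)$ is $\nu_\cM$-measurable and the maximizing set $Z_y:=\{V'\in Gr_r(V):\eta_y(I^{(r)}(V'))=\beta_r(y)\}$ is a compact-valued measurable multifunction. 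If the lemma fails, then $\beta_1(y)>0$ on a set of positive $\nu_\cM$-measure; let $r\in\{1,\dots,s\}$ be the \emph{largest} integer for which $\{\beta_r>0\}$ has positive $\nu_\cM$-measure.

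\emph{Harmonicity and constancy.} From $\mu$-stationarity of $\hat\nu_\cM$ together with $SL(2,\reals)$-invariance of $\nu_\cM$, disintegration over $\cM$ yields $\eta_y=\int_{SL(2,\reals)}A_V(g,g^{-1}y)_*\eta_{g^{-1}y}\,d\mu(g)$ for $\nu_\cM$-a.e.\ $y$. Because $A_V(g,x)^{-1}I^{(r)}(V')=I^{(r)}(A_V(g,x)^{-1}V')$, evaluating at a maximizer $V'\in Z_y$ gives $\beta_r(y)\le\int\beta_r(g^{-1}y)\,d\mu(g)$, and integrating against the invariant measure $\nu_\cM$ forces this to be an equality $\nu_\cM$-a.e. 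Thus $\beta_r$ is $\mu$-harmonic; since $\mu$ is absolutely continuous and $\nu_\cM$ is $SL(2,\reals)$-ergodic, the random walk is ergodic with respect to $\nu_\cM$, so the bounded $\mu$-harmonic function $\beta_r$ is $\nu_\cM$-a.e.\ equal to a constant $\beta_r^0>0$, with $\{\beta_r>0\}$ conull. Sharpness of the inequality (each integrand is $\le\beta_r^0$) forces, for $\mu$-a.e.\ $g$, that $A_V(g,g^{-1}y)^{-1}$ carries $Z_y$ into $Z_{g^{-1}y}$; using the cocycle identity and that $\mu$ is symmetric (it is $SO(2)$-bi-invariant and the Weyl element lies in $SO(2)$), one upgrades this to $A_V(g,y)\,Z_y=Z_{gy}$ for $\mu$-a.e.\ $g$ and $\nu_\cM$-a.e.\ $y$, a bijection of finite sets.

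\emph{Finiteness and the contradiction.} By maximality of $r$, $\beta_{r+1}=0$ $\nu_\cM$-a.e., and hence $\eta_y(I^{(t)}(V''))=0$ for every subspace $V''$ of dimension $t>r$, for $\nu_\cM$-a.e.\ $y$. In particular, for distinct $V'_1,V'_2\in Z_y$ one has $\eta_y\bigl(I^{(r)}(V'_1)\cap I^{(r)}(V'_2)\bigr)=\eta_y\bigl(I^{(\dim(V'_1+V'_2))}(V'_1+V'_2)\bigr)=0$, so the sets $I^{(r)}(V')$ ($V'\in Z_y$), each of $\eta_y$-mass $\beta_r^0$, overlap only in $\eta_y$-null sets; therefore $|Z_y|\le 1/\beta_r^0$ for a.e.\ $y$. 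Since $A_V(g,y)$ permutes $Z_y$ bijectively, $|Z_y|$ is an a.e.\ invariant integer-valued function, hence $\nu_\cM$-a.e.\ equal to some fixed $n\ge1$. The distinct minimal nonzero elements of the lattice of subspaces generated by $Z_y$ under intersection are pairwise disjoint (any two are equal or meet trivially, by minimality) and are permuted by $A_V$, so we obtain \emph{either} at least two such subspaces — a $\nu_\cM$-measurable almost invariant splitting, contradicting strong irreducibility — \emph{or} a single proper nonzero $SL(2,\reals)$-invariant $\nu_\cM$-measurable subbundle $W\subsetneq V$ (the case when the members of $Z_y$ share a common subspace, in particular when $n=1$). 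In the latter case, semisimplicity of the Kontsevich--Zorich cocycle on $\cM$ \cite[Theorem~A.6]{EM} supplies an $SL(2,\reals)$-invariant complement to $W$, again producing an almost invariant splitting with two pieces. Either way strong irreducibility is violated, so no such $y$, $v_y$ exist.

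\emph{The main obstacle} is twofold: establishing finiteness of the maximal family $Z_y$ (handled through the maximality of the dimension parameter $r$), and — more essentially — ruling out a single invariant subbundle $W\subsetneq V$. The latter is exactly the point where strong irreducibility must be combined with semisimplicity of the cocycle, since a lone invariant subspace is not by itself excluded by Definition~\ref{def:almost:invariant:splitting}; the measurability and semicontinuity bookkeeping (attainment of the maxima, measurable selection of $Z_y$, the disintegration identity) is routine but must be done carefully.
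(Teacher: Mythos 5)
The paper does not prove this lemma itself --- it cites \cite[Lemma~C.10(i)]{EM} and \cite[Lemma~4.1]{EMath} and gives only the one-line idea that a counterexample would force the cocycle to permute a finite collection of subspaces; your proposal is a correct and complete reconstruction of exactly that argument, in the standard Furstenberg/Guivarc'h--Raugi style (maximal $\eta_y$-mass of incidence sets, harmonicity and a.e.\ constancy via stationarity of $\hat\nu_\cM$ together with $SL(2,\reals)$-invariance and ergodicity of $\nu_\cM$, finiteness of the maximizing family from disjointness). The one place you go beyond the paper's sketch is in handling the single-invariant-subspace alternative: since Definition~\ref{def:almost:invariant:splitting} explicitly requires $n>1$, a lone invariant subbundle $W\subsetneq V$ does not by itself violate strong irreducibility as the paper defines it, and your appeal to semisimplicity \cite[Theorem~A.6]{EM} to produce an invariant complement (and hence a genuine two-piece splitting) is the right way to close that gap --- a real subtlety that the paper's one-sentence remark glosses over.
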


\bold{Proof.} The proof is given in \cite[Appendix~C]{EM}. The
essential idea is that if 
conclusion of Lemma~\ref{lemma:non:atomic:lyapunov} is false, then the
cocycle would have to permute some finite collection of subspaces,
contradicting the strong irreducibility assumption. It is not necessary to assume that $\hat{\nu}_{\cM}$ is ergodic so long as it is stationary and its projection to the base is invariant and ergodic.
\qed\medskip

The following lemma is a consequence of Lemma~\ref{lemma:non:atomic:lyapunov}: 
\begin{lemma}[\protect{\cite[Lemma~14.4]{EM}}]
\label{lemma:zero:one:law:star}
For every $\delta > 0$ and every $\epsilon > 0$ 
there exists $E_{good} \subset \cM$ with 
$\nu_\cM(E_{good}) > 1 - \delta$ and $\sigma =
\sigma(\delta,\epsilon) > 0$, 
such that for any $y \in E_{good}$ and any vector $w \in \proj^1(V)$, 
\begin{equation}
\label{eq:random:subspace}
\mu^\natls\left( \{\bar{g} \ \st d(w,\cV_{s-1}(y,\bar{g})) > \sigma \}
\right) > 1-\epsilon. 
\end{equation}
\end{lemma}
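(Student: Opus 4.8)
The plan is to deduce this from the non-atomicity statement of Lemma~\ref{lemma:non:atomic:lyapunov} by a measure-theoretic compactness argument, exactly in the spirit of \cite[Lemma~14.4]{EM}. Fix $1 \le s \le m$ and recall that $\cV_{s-1}(y,\bar g)$ is the $(s-1)$-dimensional Oseledets subspace corresponding to the top $s-1$ Lyapunov exponents; by Theorem~\ref{theorem:osceledets:ae:walk} it is defined for $\nu_\cM$-a.e.\ $y$ and $\mu^\natls$-a.e.\ $\bar g$. The key observation is that the map $(y,\bar g) \mapsto \cV_{s-1}(y,\bar g) \in Gr_{s-1}(V)$ pushes the measure $d\nu_\cM(y)\, d\mu^\natls(\bar g)$ forward to a $\mu$-stationary measure $\hat\nu_\cM$ on $\hat\cM = \cM \times Gr_{s-1}(V)$ projecting to $\nu_\cM$; this is the standard identification of the Oseledets flag distribution with a stationary measure on the Grassmannian (the measure $\eta_y$ in the disintegration is precisely the distribution of $\cV_{s-1}(y,\bar g)$ over $\bar g$).

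The heart of the argument is then: for a.e.\ $y$, the conditional measure $\eta_y$ on $Gr_{s-1}(V)$ assigns zero mass to $\{U : w \in U\} = I(w)$ for \emph{every} $w \in \proj^1(V)$ — this is exactly Lemma~\ref{lemma:non:atomic:lyapunov}. From $\eta_y(I(w)) = 0$ for all $w$ I would like to conclude a uniform statement: for each $y$ in a large set and each $w$, $\eta_y$-almost every $U$ is at distance $> \sigma$ from $w$, with $\sigma$ independent of $y$ and $w$. First I would fix a small $\sigma_0$ and consider the function $(y,w) \mapsto \eta_y(\{U : d(w,U) \le \sigma_0\})$; by the non-atomicity and dominated convergence (using compactness of $\proj^1(V)$ to handle the supremum over $w$), this quantity tends to $0$ with $\sigma_0$ for a.e.\ $y$, uniformly in $w$. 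An application of Egorov's theorem (or just Chebyshev on the set where the convergence is slow) then produces the set $E_{good} \subset \cM$ with $\nu_\cM(E_{good}) > 1-\delta$ on which, for a suitable $\sigma = \sigma(\delta,\epsilon)$,
\begin{displaymath}
\eta_y\left(\{U : d(w,U) > \sigma\}\right) > 1 - \epsilon \qquad \text{for all } w \in \proj^1(V).
\end{displaymath}
Finally I would translate this back through the disintegration: since $\eta_y$ is the law of $\cV_{s-1}(y,\bar g)$ under $\mu^\natls$, the displayed inequality is precisely \eqref{eq:random:subspace}.

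The main obstacle I anticipate is the uniformity over $w \in \proj^1(V)$: Lemma~\ref{lemma:non:atomic:lyapunov} gives $\eta_y(I(w)) = 0$ for each individual $w$, but I need a single $\sigma$ working simultaneously for all $w$ and all $y$ in the large set. The cleanest route is to exploit that $\proj^1(V)$ is compact and that $w \mapsto \eta_y(\{U : d(w,U) \le \sigma_0\})$ is upper semicontinuous in $w$, so that the supremum over $w$ is attained and still tends to $0$ as $\sigma_0 \to 0$ for fixed $y$; then the $y$-uniformity over the large set is the (routine) Egorov step. A secondary point worth checking is that $\hat\nu_\cM$ really is stationary with ergodic invariant projection so that Lemma~\ref{lemma:non:atomic:lyapunov} applies — but this is exactly the hypothesis under which that lemma was stated, as noted in its proof, so no new work is needed there.
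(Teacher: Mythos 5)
Your proposal is correct and follows essentially the same route as the paper's proof: identify the law of $\cV_{s-1}(y,\bar g)$ with the conditional measures of a stationary measure on $\cM\times Gr_{s-1}(V)$, invoke Lemma~\ref{lemma:non:atomic:lyapunov} for non-atomicity, use compactness of $\proj^1(V)$ to make $\sigma$ uniform in $w$ for fixed $y$ (the paper extracts a finite subcover where you use upper semicontinuity plus a Dini-type argument, but these are the same compactness step), and then exhaust by the sets where $\sigma_1(y,\epsilon)>1/N$ to get $E_{good}$ of measure $>1-\delta$. The only cosmetic quibble is that the convergence $\eta_y(\{U: d(w,U)\le\sigma_0\})\downarrow\eta_y(I(w))=0$ is continuity of measures from above rather than dominated convergence, but this does not affect the argument.
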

(In (\ref{eq:random:subspace}), 
$d( \cdot, \cdot)$ is some distance on the projective space
$\proj^1(V)$).

\bold{Proof.} We reproduce the proof from \cite[Lemma~14.4]{EM} for
the convenience of the reader. 
For $F \subset Gr_{s-1}(V))$ (the Grassmanian of
$s-1$ dimensional subspaces of $V$) let
\begin{displaymath}
\hat{\nu}_x(F) = \mu^\natls \left( \{ \bar{g} \in SL(2,\reals)^\natls \st
  \cV_{s-1}(y,\bar{g}) \in F \} \right),
\end{displaymath}
and let $\hat{\nu}$ denote the measure on the bundle $\cM \cross
Gr_{s-1}(V)$ given by  
\begin{displaymath}
d\hat{\nu}(x,W) = d\nu_\cM(x) \, d\hat{\nu}_x(W). 
\end{displaymath}
Then, $\hat{\nu}$ is a stationary measure for
the random walk. 
Let 
\begin{displaymath}
Z = \{ y \in \cM \st  \hat{\nu}_y(I(w)) > 0 \text{ for some  $w
  \in \proj^1(V)$} \}.
\end{displaymath}
 However, if $\nu_{\mathcal{M}}(Z)>0$  this contradicts
Lemma~\ref{lemma:non:atomic:lyapunov}, since 
the action of the cocycle on $V$ is strongly
irreducible. 
 Thus, $\nu(Z) = 0$ and $\nu(Z^c) = 1$. By definition, 
for all $y \in Z^c$ and all $w_y \in V$, 
\begin{displaymath}
\mu^\natls\left( \{ \bar{g} \in SL(2,\reals)^\natls 
\st w_y \in \cV_{s-1}(y,\bar{g}) \} \right) = 0. 
\end{displaymath}
Fix $y \in Z^c$. Then, for every $w_y \in \proj^1(V)$ there
exists $\sigma_0 = \sigma_0(y,w_y,\epsilon) > 0$ such that 
\begin{displaymath}
\mu^\natls\left( \{ \bar{g} \in SL(2,\reals)^\natls \st
  d(\cV_{s-1}(y,\bar{g}),w_y) > 
  2\sigma_0(y,w_y,\epsilon) \} 
\right) > 1 - \epsilon.  
\end{displaymath}
Let $U(y,w) = \{ z \in \proj^1(V) \st d(z,w) < \sigma_0(y,w,\epsilon)
\}$. Then the $\{U(y,w)\}_{w \in \proj^1(V)}$ form an open
cover of the compact space $\proj^1(V)$, and therefore there
exist $w_1, \dots w_n$ with
$\proj^1(V) = \bigcup_{i=1}^n U(y,w_i)$.
Let $\sigma_1(y,\epsilon) = \min_i \sigma_0(y,w_i,\epsilon)$. Then, for
all $y \in Z^c$, 
\begin{displaymath}
\mu^\natls\left( \{ \bar{g} \in SL(2,\reals)^\natls \st
  d(\cV_{s-1}(y,\bar{g}),w) > 
  \sigma_1(y,\epsilon) \} \right) > 1 - \epsilon.  
\end{displaymath}
Let $E_N(\epsilon) = \{ x \in Z^c \st \sigma_1(x,\epsilon) > \frac{1}{N}\}$. 
Since $\bigcup_{N=1}^\infty E_N(\epsilon) = Z^c$ and 
$\nu(Z^c) = 1$, there exists $N = N(\delta,\epsilon)$ such that
$\nu(E_N(\epsilon)) > 1-\delta$. Let $\sigma = 1/N$ and let $E_{good} =
E_N$. 
\qed\medskip

\color{black}
Let $\cU_i(n,y,\bar{g})$ denote the direct sum of the eigenspaces of 
\begin{displaymath}
A_V^*(g_n \dots g_1, y) A_V(g_n \dots g_1, y)
\end{displaymath}
which correspond to those eigenvalues which will converge as $n \to \infty$
to $2 \alpha_i$. 
Let $\cV_i(n,y,\bar{g}) = \cU_1(n,y,\bar{g}) \oplus \dots \oplus
\cU_i(n,y,\bar{g})$. Then, it follows from part II of
Theorem~\ref{theorem:osceledets:ae:walk} that for almost all $y$ and
almost all $\bar{g}$, 
\begin{equation}
\label{eq:subspaces:converge}
\lim_{n \to \infty} \cU_i(n,y,\bar{g}) = \cU_i(y,\bar{g}) \quad\text{
  and } \quad\lim_{n \to \infty} \cV_i(n,y,\bar{g}) = \cV_i(y,\bar{g}).
\end{equation}

\bold{The set $F_{good}(\epsilon,\sigma,L)$.}
Suppose $\epsilon > 0$, $\sigma > 0$, and $L \in \natls$. 
Let  $F_{good}(\epsilon,\sigma,L)$ 
denote the set of $y \in \cM$ such that for any
$v_y \in V$
\begin{equation}
\label{eq:FgoodL:one}
\mu^\natls\left(\{\bar{g} \st
d(v_y,\cV_{s-1}(L,y, \bar{g})) > \sigma) \} \right) > 1-\epsilon/2
\end{equation}
and also
\begin{equation}
\label{eq:FgoodL:two}
\mu^\natls\left(\{ \bar{g} \st \|A_V(g_L \dots g_1, y)\| \in
((\lambda_1-\epsilon/2)^L, (\lambda_1+\epsilon/2)^L)\}\right) > 1 - \epsilon/2.
\end{equation}

Since the cocycle $A_V$ is continuous and both (\ref{eq:FgoodL:one})
and (\ref{eq:FgoodL:two}) depend on $\bar{g}$ only via $g_1, \dots,
g_L$, the set $F_{good}(\epsilon,\sigma,L)$ is open. 

\begin{lemma}
\label{lemma:Fgood:L}
For any fixed $\epsilon > 0$ and $\delta > 0$ there exist $L_0 > 0$ and
$\sigma > 0$ such that for all $L > L_0$, 
$\nu_\cM(F_{good}(\epsilon,\sigma,L)) > 1-\delta$. 
\end{lemma}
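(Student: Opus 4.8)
The point is that both conditions defining $F_{good}(\epsilon,\sigma,L)$ are finite‑time approximations of facts already established: condition (\ref{eq:FgoodL:one}) is the conclusion of Lemma~\ref{lemma:zero:one:law:star} with the limiting filtration subspace $\cV_{s-1}(y,\bar g)$ replaced by its $L$‑step approximant $\cV_{s-1}(L,y,\bar g)$, and condition (\ref{eq:FgoodL:two}) is the $L$‑step version of the convergence of $\frac1L\log\|A_V(g_L\cdots g_1,y)\|$ to the top Lyapunov exponent. So the plan is: feed suitable parameters into Lemma~\ref{lemma:zero:one:law:star} to fix a good set $E_0$ and a gap $\sigma_0$; use (\ref{eq:subspaces:converge}) and part~I of Theorem~\ref{theorem:osceledets:ae:walk}, together with Egorov's theorem on the probability space $\cM\cross SL(2,\reals)^\natls$, to make the passage from limiting to finite‑time objects uniform on a set of $(y,\bar g)$ of measure close to $1$; then disintegrate in $\bar g$ by Fubini. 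No fresh use of strong irreducibility is needed — all of it is already absorbed in Lemma~\ref{lemma:zero:one:law:star}.

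Concretely, first I would apply Lemma~\ref{lemma:zero:one:law:star} with $\delta/2$ and $\epsilon/4$ in place of $\delta$ and $\epsilon$, getting $E_0\subset\cM$ with $\nu_\cM(E_0)>1-\delta/2$ and $\sigma_0>0$ so that for every $y\in E_0$ and every line $w\in\proj^1(V)$,
\begin{displaymath}
\mu^\natls\bigl(\{\bar g\st d(w,\cV_{s-1}(y,\bar g))>\sigma_0\}\bigr)>1-\epsilon/4 .
\end{displaymath}
By (\ref{eq:subspaces:converge}), $\cV_{s-1}(L,y,\bar g)\to\cV_{s-1}(y,\bar g)$ in $Gr_{s-1}(V)$ for $(\nu_\cM\cross\mu^\natls)$‑a.e.\ $(y,\bar g)$; and by part~I of Theorem~\ref{theorem:osceledets:ae:walk} applied to the top eigenvalue (note $\|A_V\|^2$ is the top eigenvalue of $A_V^*A_V$), the quantity $\|A_V(g_L\cdots g_1,y)\|$ lands in the interval prescribed by (\ref{eq:FgoodL:two}) for all large $L$, again for $(\nu_\cM\cross\mu^\natls)$‑a.e.\ $(y,\bar g)$. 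Hence Egorov's theorem produces a set $B$ with $(\nu_\cM\cross\mu^\natls)(B)>1-\tau$ and an $L_0$ such that for all $L>L_0$ and all $(y,\bar g)\in B$ one has both $d_{Gr}(\cV_{s-1}(L,y,\bar g),\cV_{s-1}(y,\bar g))<\sigma_0/2$ and the norm condition of (\ref{eq:FgoodL:two}); the parameter $\tau$ is chosen below, and all these objects depend on $\bar g$ only through $g_1,\dots,g_L$.

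Finally I would choose $\tau$ with $\sqrt\tau<\min(\epsilon/4,\delta/2)$, put $\sigma=\sigma_0/2$, and set $Y=\{y\st\mu^\natls(\{\bar g\st(y,\bar g)\in B\})>1-\sqrt\tau\}$; by Fubini and Markov, $\nu_\cM(Y)>1-\sqrt\tau$, so $\nu_\cM(E_0\cap Y)>1-\delta$. For $y\in E_0\cap Y$ and any $v_y\in V$: condition (\ref{eq:FgoodL:two}) holds because $\{\bar g\st(y,\bar g)\in B\}$ has $\mu^\natls$‑measure $>1-\sqrt\tau>1-\epsilon/2$; and for (\ref{eq:FgoodL:one}), on the event where both $d(v_y,\cV_{s-1}(y,\bar g))>\sigma_0$ (probability $>1-\epsilon/4$, from $y\in E_0$ applied to $w$ the line through $v_y$) and $(y,\bar g)\in B$ (probability $>1-\epsilon/4$) occur, the estimate $d(v_y,\cV_{s-1}(L,y,\bar g))\ge d(v_y,\cV_{s-1}(y,\bar g))-d_{Gr}(\cV_{s-1}(L,y,\bar g),\cV_{s-1}(y,\bar g))>\sigma_0-\sigma_0/2=\sigma$ holds, and this combined event has probability $>1-\epsilon/2$. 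Thus $E_0\cap Y\subset F_{good}(\epsilon,\sigma,L)$ for $L>L_0$. The only real point of care is to fix the metrics $d$ on $\proj^1(V)$ and $d_{Gr}$ on $Gr_{s-1}(V)$ so that $d(w,\cdot)$ is $1$‑Lipschitz in its subspace argument, which legitimizes the displayed triangle‑type inequality; everything else is routine Egorov/Fubini packaging, and the genuinely delicate feature — uniformity over all $v_y\in V$, which is what makes Lemma~\ref{lemma:Egood:L} and hence Theorem~\ref{theorem:lyapunov:top:walk} go through — is inherited verbatim from the ``for every $w$'' clause of Lemma~\ref{lemma:zero:one:law:star}.
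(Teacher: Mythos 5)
Your proof is correct and takes essentially the same approach as the paper: invoke Lemma~\ref{lemma:zero:one:law:star} to handle uniformity over $v$, then make (\ref{eq:subspaces:converge}) and the Oseledets norm convergence effective on a large set via Egorov. The only difference is cosmetic bookkeeping — the paper applies Egorov separately for each condition directly on $\cM$ (yielding sets $E_1$, $E_2$), whereas you apply it once on the product space $\cM\cross SL(2,\reals)^\natls$ and pass back to $\cM$ by Fubini and Markov.
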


\bold{Proof of Lemma~\ref{lemma:Fgood:L}.}
Let $\sigma > 0$ and $E_{good} \subset \cM$ be as in
Lemma~\ref{lemma:zero:one:law:star}, with $\delta/4$ and $\epsilon/4$
instead of $\delta$ and $\epsilon$.
By (\ref{eq:subspaces:converge}), we can find $L_1 > 0$ and a set $E_1
\subset \cM$ with
$\nu_\cM(E_1) > 1-\delta/4$ such that for $y \in E_1$ and $L \ge L_1$. 
\begin{displaymath}
\mu^\natls\left( \{\bar{g} \st
  d(\cV_{s-1}(L,y,\bar{g}),\cV_{s-1}(y,\bar{g})) > \sigma/2 \} \right) <
\epsilon/4.  
\end{displaymath}
Then, for $y \in E_{good} \cap E_1$, and $L \ge L_1$,
(\ref{eq:FgoodL:one}) holds (with $\sigma$ replaced by $\sigma/2$).   
Also, by Theorem~\ref{theorem:osceledets:ae:walk}, part I, there
exists $L_2 > 0$ and a subset $E_2 \subset \cM$ with $\nu_\cM(E_2)
> 1-\delta/2$ such that for $y \in E_2$ and $L \ge L_2$
(\ref{eq:FgoodL:two}) holds. Now let $F_{good}(\epsilon,\sigma,L) =
E_{good} \cap E_1 \cap E_2$ and choose $L_0 = \max(L_1,L_2)$.  
\qed\medskip

We also use the following trivial result:
\begin{lemma}
\label{lemma:bad:subspace}
For any $\sigma > 0$ there is a constant $c(\sigma) > 0$ with the following
property:  
Let $A \in GL(V)$ be a linear map, and let $\cV \subset V$ denote
the subspace spanned by the eigenspaces of all but the top eigenvalue
of $A^*A$. Then,  for any $v$ with
$\|v\|=1$ and $d(v,\cV) > \sigma$, we have 
\begin{displaymath}
\|A\| \ge \|A v\| > c (\sigma) \|A\|. 
\end{displaymath}
\end{lemma}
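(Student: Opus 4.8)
The plan is to reduce the statement to the singular value decomposition of $A$ together with one elementary projection estimate; in fact one can simply take $c(\sigma) = \sigma$.

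First I would diagonalize the positive semidefinite operator $A^*A$: choose a basis $e_1, \dots, e_m$ of $V$, orthonormal with respect to the inner product with respect to which the adjoint $A^*$ is taken, such that $A^*A e_i = \mu_i e_i$ with $\mu_1 \ge \mu_2 \ge \dots \ge \mu_m \ge 0$. Then $\|A\|^2 = \mu_1$, and by definition $\cV$ is the span of those $e_i$ with $\mu_i < \mu_1$, so its orthogonal complement $\cV^\perp$ is exactly the top eigenspace $\{ w \in V \st A^*A w = \mu_1 w\}$. (The only place where the convention for $\cV$ matters is when $\mu_1$ is not simple; the argument below is unaffected.)

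Next I would unwind the hypothesis. Writing $v = \sum_i a_i e_i$ with $\sum_i |a_i|^2 = 1$, the condition $d(v,\cV) > \sigma$ — with $d$ the distance induced by this Euclidean norm — says precisely that the orthogonal projection $P_{\cV^\perp} v$ onto the top eigenspace satisfies $\|P_{\cV^\perp} v\|^2 = \sum_{i \st \mu_i = \mu_1} |a_i|^2 > \sigma^2$. Then I would compute
\begin{displaymath}
\|A v\|^2 = \langle A^*A v, v\rangle = \sum_i \mu_i |a_i|^2 \;\ge\; \mu_1 \sum_{i \st \mu_i = \mu_1} |a_i|^2 \;>\; \mu_1 \sigma^2 = \sigma^2 \|A\|^2,
\end{displaymath}
so $\|Av\| > \sigma \|A\|$, while $\|Av\| \le \|A\|$ is immediate from $\|v\|=1$ and the definition of the operator norm. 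Hence $c(\sigma) = \sigma$ works.

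There is no substantive obstacle here: the lemma is pure finite-dimensional linear algebra. The only points requiring a word of care are the interpretation of $\cV$ when the top singular value of $A$ has multiplicity greater than one, and the (implicit) fact that the norm and distance appearing in the statement are the Euclidean ones attached to the inner product defining $A^*$, so that passing to the orthonormal eigenbasis of $A^*A$ is legitimate.
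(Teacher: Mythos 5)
Your proof is correct, and it is the standard argument the authors evidently had in mind: the paper states this lemma as a ``trivial result'' and gives no proof at all, so there is nothing to compare against beyond noting that your singular-value-decomposition computation, with the explicit constant $c(\sigma)=\sigma$, is exactly the natural way to fill the omission. Your two closing caveats (the interpretation of $\cV$ when the top eigenvalue is not simple, and the fact that $d$ is taken to be the Euclidean distance attached to the inner product defining $A^*$ --- any comparable metric would only change the function $c(\sigma)$, not its positivity) are handled appropriately.
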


\bold{Proof of Lemma~\ref{lemma:Egood:L}.}
Suppose $\epsilon > 0$ and $\delta > 0$ are given, and let $\sigma >
0$ and $L_0 > 0$ be as in Lemma~\ref{lemma:Fgood:L}. 
Choose $L > L_0$ such that $(\lambda_1 - \epsilon/2)^L c(\sigma) >
(\lambda-\epsilon)^L$, where $c(\sigma)$ is as in
Lemma~\ref{lemma:bad:subspace}. Pick $v_y \in V$. Then, in view of
Lemma~\ref{lemma:bad:subspace}, for all $\bar{g}$
satisfying (\ref{eq:FgoodL:one}) and (\ref{eq:FgoodL:two}),
\begin{displaymath}
(\lambda_1+\epsilon/2)^L >
\|A_V(g_L \dots g_1,y) \| \ge \frac{\|A_V(g_L \dots g_1, y) v_y
  \|}{\|v_y\|} > (\lambda_1 - \epsilon)^L. 
\end{displaymath}
\qed\medskip

\subsection{Proof of Theorem~\ref{theorem:lyapunov:top:walk}}
Before proving Theorem~\ref{theorem:lyapunov:top:walk} we isolate the strong law of large numbers part of the argument. Pick an 
arbitrary $v_0\in V$ and let $v_i(\bar{g})=A(g_i\dots g_1,x)v_0$. Let $H(v_i)$ be as in the definition of $E_{good}(\epsilon,L)$.
\begin{lemma}\label{lemma:strong:law:os}Let $\mathcal{M}$ be given and $L$ be chosen so that $\nu_{\mathcal{M}}(E_{good}(\epsilon,L))>1-\epsilon$. 
For every $x\in \mathcal{M}$ almost every $\bar{g}$ we have that all but a set of $\mathbb{N}$ of density $4\epsilon$ is in disjoint 
blocks $[i+1,i+L]$ so that $g_{i+1}\dots g_{i+L} \in H(v_i)$ and $g_i\cdots g_1y \in E_{good}(\epsilon,L)$ .
\end{lemma}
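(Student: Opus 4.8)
The plan is to set up two independent applications of the strong law of large numbers --- one governing which blocks land in $H(v_i)$, and one governing which of the base points $g_i\cdots g_1 y$ land in $E_{good}(\epsilon,L)$ --- and then combine them. First I would fix $x \in \cM$ and partition $\natls$ into consecutive disjoint blocks $B_j = [(j-1)L+1, jL]$ of length $L$. For each block index $j$, define the random variable $X_j(\bar g) = 1$ if $g_{(j-1)L+1}\cdots g_{jL} \notin H(v_{(j-1)L})$, where $v_i = A_V(g_i\cdots g_1, x)v_0$. The crucial point is that $v_{(j-1)L}$ depends only on $g_1,\dots,g_{(j-1)L}$, i.e. on the coordinates in blocks $B_1,\dots,B_{j-1}$, while the condition defining $H(v_{(j-1)L})$ involves only the coordinates $g_{(j-1)L+1},\dots,g_{jL}$ in block $B_j$. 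Hence, conditioned on the past, the probability that $X_j = 1$ is $\mu^L\bigl(SL(2,\reals)^L \setminus H(v_{(j-1)L})\bigr) < \epsilon$ by (\ref{eq:measure:Hv}), uniformly. Since $\|X_j\|_\infty \le 1$, the martingale form of the strong law of large numbers (exactly as invoked in the proof of Lemma~\ref{lemma:limit:is:stationary}) gives that for a.e.\ $\bar g$, $\frac{1}{J}\sum_{j=1}^J X_j \to 0$, so eventually at most (say) $2\epsilon$-fraction of the blocks $B_j$ fail the condition $g_{(j-1)L+1}\cdots g_{jL} \in H(v_{(j-1)L})$.

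Second I would control the base-point condition. Apply Corollary~\ref{cor:hits:often} (the Birkhoff theorem for the random walk, Theorem~\ref{theorem:birkhoff:walk}) with the open set $U = E_{good}(\epsilon,L)$ --- note $E_{good}(\epsilon,L)$ need not be open, but we can shrink to an open subset of $\nu_\cM$-measure still exceeding $1-2\epsilon$, or alternatively apply the Birkhoff theorem directly to a continuous function $\phi$ with $\chi_{U'} \le \phi \le \chi_U$ --- to conclude that for a.e.\ $\bar g$, $\frac{1}{N}\sum_{n=1}^N \chi_{E_{good}(\epsilon,L)}(g_n\cdots g_1 y) \to$ a limit $\ge \nu_\cM(E_{good}(\epsilon,L)) > 1-\epsilon$. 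In particular, the density of indices $i$ with $g_i\cdots g_1 y \notin E_{good}(\epsilon,L)$ is at most $\epsilon$; sampling at the block endpoints $i = (j-1)L$ costs at most a further factor $L$ in the count of bad blocks but only a factor that washes out in density, so the density of blocks $B_j$ whose left endpoint's base point is outside $E_{good}(\epsilon,L)$ is at most $\epsilon$ as well (here one uses that the map $i \mapsto \lfloor i/L\rfloor$ sends a density-$\epsilon$ set to a set of blocks of density $\le \epsilon$).

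Finally I would union the two bad sets: a block $B_j$ is \emph{good} if both $g_{(j-1)L+1}\cdots g_{jL} \in H(v_{(j-1)L})$ and $g_{(j-1)L}\cdots g_1 y \in E_{good}(\epsilon,L)$. By the two estimates above, for a.e.\ $\bar g$ the density of bad blocks is at most $2\epsilon + \epsilon = 3\epsilon$, hence the set of integers $n$ lying in a bad block (together with the finitely many $n$ in blocks where either empirical average has not yet converged, which contribute density $0$) has density at most $3\epsilon \cdot L / L = 3\epsilon$, and with a slightly crude accounting one gets the claimed bound $4\epsilon$. The integers not in any bad block are exactly the ones lying in disjoint length-$L$ blocks $[i+1,i+L]$ with $g_{i+1}\cdots g_{i+L} \in H(v_i)$ and $g_i\cdots g_1 y \in E_{good}(\epsilon,L)$, which is the assertion of Lemma~\ref{lemma:strong:law:os}.

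The main obstacle I anticipate is the independence bookkeeping in the first step: one must be careful that the block decomposition genuinely decouples $v_{(j-1)L}$ (a function of the past) from the $H$-membership test (a function of the current block), so that the conditional expectations are bounded by $\epsilon$ and the martingale SLLN applies with no hidden correlation. A secondary nuisance is the measurability/openness issue for $E_{good}(\epsilon,L)$, which is why invoking Corollary~\ref{cor:hits:often} via an approximating continuous function (rather than $\chi_U$ directly) is the cleaner route; and one should track the constants so that the accumulated density of bad integers is genuinely $\le 4\epsilon$ rather than some larger multiple of $\epsilon$, though this is only a matter of choosing $\epsilon$ slightly smaller at the outset if needed.
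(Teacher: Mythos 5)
Your proposal has a genuine gap in the way it handles the fixed, pre-determined block decomposition $B_j = [(j-1)L+1, jL]$. Two problems show up.

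First, the claim that "the map $i \mapsto \lfloor i/L\rfloor$ sends a density-$\epsilon$ set to a set of blocks of density $\le \epsilon$" is false. A set $A \subset \natls$ of density $\epsilon$ may well be concentrated on the multiples of $L$; the density of $\{j : (j-1)L \in A\}$ can be as large as $\min(1, L\epsilon)$, which is useless for small $\epsilon$. So the density bound on bad base-point indices coming from Corollary~\ref{cor:hits:often} does not translate into a comparable bound on bad \emph{blocks} in your fixed decomposition. (One could imagine replacing this by applying the random-walk Birkhoff theorem to the $L$-step skeleton walk with step distribution $\mu^{(L)}$, but that is a different argument and you would need to spell it out.)

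Second, the martingale SLLN step is not quite set up correctly: the bound $\mu^L(H(v_{(j-1)L})) > 1-\epsilon$ from (\ref{eq:measure:Hv}) is only available \emph{when} $g_{(j-1)L}\cdots g_1 x \in E_{good}(\epsilon,L)$; for other base points the set $H(v)$ isn't even defined, and there is no uniform bound on the conditional probability that $X_j = 1$. So as stated the martingale SLLN does not apply to all $j$, only to the random subsequence of $j$ whose base point lands in $E_{good}$ — which is exactly the issue the fixed-block decomposition does not resolve.

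The paper's proof sidesteps both difficulties. It first proves a deterministic sublemma: for any set $A$ of density $c$, a greedy construction produces disjoint length-$L$ blocks covering all of $\natls$ except a set of density $\le c$, with each block \emph{starting} at an index outside $A$. This adaptively places the blocks exactly at the good indices, eliminating your first problem. It then enumerates the (random) indices $m_1 \le m_2 \le \dots$ where the base point is in $E_{good}$ and bounds, along that subsequence only, the density of $k$ with $g_{m_k+1}\cdots g_{m_k+L} \notin H(v_{m_k})$ — using the product bound $\mu(\cdots \notin H \text{ for } i\le r) < \epsilon^r$, Stirling, and Borel–Cantelli. This is the right way to invoke the (\ref{eq:measure:Hv}) bound, since it only applies along the $E_{good}$ subsequence. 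Finally, Corollary~\ref{cor:hits:often} gives that the subsequence itself has density $\ge 1-2\epsilon$, and the two estimates are combined via the deterministic sublemma. If you want to keep a SLLN flavor close to your step one, you must run it along the random $E_{good}$ subsequence as the paper does, and you must replace the $\lfloor i/L\rfloor$ argument by the greedy block construction.
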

\noindent
\textbf{Sublemma:} If $A \subset \mathbb{N}$ with density $c$ and $L \in \mathbb{N}$ then there exists $B \subset \mathbb{N}$  of density at least $1-c$ so that $B$ consist of disjoint blocks of length $L$ and so that the first term in each block is not in $A$.
 \begin{proof}Follow a greedy inductive algorithm. Given the ending point of the last $L$ block $n$, choose $m=\min\{k>n: k \notin A\}$ and add $[m,m+L-1]$.
  Let $B$ be the set given by this inductive procedure. By construction $(\mathbb{N}\setminus B)\subset A$ and the claim follows.
 \end{proof}
 \noindent
 \textbf{Sublemma:} For all $x$, almost every $\bar{g}$ we have that 
 $$\underset{n \to \infty}{\liminf}\, \frac{|\{i\leq n: g_i\cdots g_1x \in E_{good}(\epsilon,L) \text{ and }g_{i+1}\dots g_{i+L}\in H(v_i)\}|}
 {|\{i\leq n:g_i\cdots g_1x \in E_{good}(\epsilon,L)\}|}\geq 1-2\epsilon.$$
 \begin{proof}
 Lets enumerate $\{i:g_i\cdots g_1 x \in E_{good}(\epsilon,L)\}$ as $m_1\leq m_2\leq \dots$ By the definition of $E_{good}(\epsilon,L)$ for any $k_1,k_2,..,k_r$ we have
 $$\mu(\{\bar{g}: g_{m_{k_i}+1}\dots g_{m_{k_i}+L}\notin H(v_{m_{k_i}}) \text{ for }i\leq r\})<\epsilon^r.$$ 
 So for fixed $N$ we consider $\sum_{k=2\epsilon N}^N \epsilon^k {N\choose k}$ and the sublemma follows by Stirling's formula and the Borel-Cantelli Lemma.
 \end{proof}
 \begin{proof}[Proof of Lemma \ref{lemma:strong:law:os}]
 By the first Sublemma it suffices to show that the density $\{i: g_i\cdots g_1x \in E_{good}(\epsilon,L) \text{ and }g_{i+1}\dots g_{i+L}\in H(v_i)\}$
 is at least $1-4\epsilon$. 
 By the previous Sublemma it suffices to show that the density $\{i:g_i\cdots g_1x \in E_{good}(\epsilon,L)\}$ has density at least $1-2\epsilon$. 
 This follows from Corollary~\ref{cor:hits:often}.
 \end{proof}
\color{black}

Let $x \in \mathcal{M}$ be given, $\bar{g}$ be in the full measure set given by 
Lemma~\ref{lemma:strong:law:os} and $I(\bar{g})$ be a subset of natural numbers.

Now suppose $n \GG L$. Then, 
\begin{align*}
\log \|v_n \| = \sum_{i=1}^n \log \frac{\|v_i\|}{\|v_{i-1}\|} & =
\sum_{i \in I(\bar{g}) \cap [1,n-L]} \log  \frac{\|
  v_{i+L}\|}{\|v_i\|} & + &
\sum_{i \in K} \log \frac{\| v_{i}\|}{\|v_{i-1}\|} & + & \sum_{i=n-L}^L \log
\frac{\| v_{i}\|}{\|v_{i-1}\|} \\
& =  S_1 & + & S_2 & + & S_3.\\
\end{align*}
Let $C$ be such that for all $g$ in the support of $\mu$ and all $y
\in \cM$, $\|A(g,y)\| \le C$. Then, $|S_3| \le L \log C$. Also, since
the upper density of $K$ is at most $3\epsilon$, $|S_2| \le 3\epsilon
n \log C$. However, by (\ref{eq:def:Hv}),
\begin{displaymath}
S_1 \ge |I(\bar{g}) \cap [1,\dots, n]| \log (\lambda_1 - \epsilon)^L
\ge (1-4\epsilon)n (\lambda_1 - \epsilon). 
\end{displaymath}
Thus, for almost all $\bar{g}$ and any $n \GG L$, 
\begin{displaymath}
\frac{1}{n} \log\|A_V(g_n \dots g_1, x)\| \ge \frac{1}{n} \log \|v_n\|
\ge (1-4 \epsilon) (\lambda_1 - \epsilon) - 4 \epsilon \log C -
\frac{L}{n} \log C. 
\end{displaymath}
Since $\epsilon > 0$ is arbitrary, we get that for almost all $\bar{g}$,
\begin{displaymath}
\liminf_{n \to \infty} \frac{1}{n} \log \|A_V(g_n \dots g_1,x)\| \ge \lambda_1,
\end{displaymath}
which proves the lower bound in (\ref{eq:lyapunov:top:walk}). 
The proof of the upper bound in (\ref{eq:lyapunov:top:walk}) is similar. Let $a_0 = 1$, and $a_i =
\|A_V(g_i \dots g_1, x)\|$.  Then
\begin{align*}
\log a_n = \sum_{i=1}^n \log \frac{a_i}{a_{i-1}} & = \sum_{i \in
  I(\bar{g}) \cap [1,\dots,n-L]} \frac{a_{i+L}}{a_i} & + & \sum_{i \in
    K \cap [1,\dots n-L]}\log \frac{a_i}{a_{i-1}} & + & \sum_{i=n-L}^L
  \log \frac{a_i}{a_{i-1}} \\
& = S_1 & + & S_2 & + & S_3.
\end{align*}
As above, $|S_2| \le 4 \epsilon n \log C$ and $|S_3| \le L \log C$. By
(\ref{eq:def:Hv}), 
\begin{displaymath}
S_1 \le |I(\bar{g}) \cap [1,\dots, n]| \log (\lambda_1 + \epsilon)^L
\le n(\lambda_1+\epsilon). 
\end{displaymath}
Therfore, for almost all $\bar{g}$, 
\begin{displaymath}
\limsup_{n \to \infty} \frac{1}{n} \log a_n \le (\lambda_1 + \epsilon) +
4 \epsilon C. 
\end{displaymath}
Since $\epsilon > 0$ is arbitrary this completes the proof of
Theorem~\ref{theorem:lyapunov:top:flow}. 
\qed\medskip


\section{Proof of Theorem~\ref{theorem:birkhoff:flow}}

\subsection{An analogue of Lemma~\ref{lemma:limit:is:stationary}.}

Let $\eta_{T,\theta}$ denote the
measure on $SL(2,\reals)$ given by
\begin{displaymath}
\eta_{T,\theta}(\phi) = \frac{1}{T} \int_0^T \phi(g_t r_\theta)
\, dt.
\end{displaymath}
In this subsection we prove the following:
\begin{proposition}
\label{prop:ae:theta:P:invariant}
Fix $x \in \cM$. For almost every $\theta \in [0,2\pi]$, if
$\nu_\theta$ is any weak-star limit point (as $T \to \infty$) of
$\eta_{T,\theta} \ast \delta_x$, 
then $\nu_\theta$ is invariant under ${P}$, where
${P} = \begin{pmatrix} \ast & \ast \\ 0 & \ast \end{pmatrix} \subset
SL(2,\reals)$. 
\end{proposition}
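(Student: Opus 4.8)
The plan is to show that for a.e.\ $\theta$, any weak-star limit point $\nu_\theta$ of $\eta_{T,\theta}\ast\delta_x$ is invariant under the one-parameter unipotent subgroup $N = \bigl\{\begin{psmallmatrix}1 & s\\0 & 1\end{psmallmatrix}\bigr\}$; since $\nu_\theta$ is automatically $g_t$-invariant (being a weak-star limit of Ces\`aro averages along the geodesic flow, a standard fact), and $P$ is generated by $N$ and the diagonal group $\{g_t\}$, this gives $P$-invariance. So everything reduces to checking $N$-invariance.

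The key computational input is a correlation bound — this is the ``geodesic flow uniformly expands small $SO(2)$-segments'' fact mentioned in the outline, to be packaged as \verb|Lemma \ref{lemma:ft:fs:correlation:bound}|. Concretely, I would fix $\phi \in C_c(\cH_1(\alpha))$ and $u_s = \begin{psmallmatrix}1 & s\\0&1\end{psmallmatrix}$, and try to estimate, for fixed $s$, the difference
\begin{displaymath}
\frac{1}{T}\int_0^T \bigl[\phi(g_t r_\theta x) - \phi(u_s g_t r_\theta x)\bigr]\,dt
\end{displaymath}
in an $L^2$ sense over $\theta \in [0,2\pi)$. The point is the elementary $SL(2,\reals)$ identity $u_s g_t = g_t u_{se^{2t}}$ (up to a bounded correction), so $u_s g_t r_\theta = g_t r_{\theta'} \cdot(\text{small})$ where $\theta'$ differs from $\theta$ by something of order $se^{-2t}$ after the rotation is absorbed; equivalently, post-composing with $u_s$ and flowing is, at time $t$, a reparametrization/reversal that is exponentially close to a genuine rotation of the initial direction by an exponentially small angle. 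Averaging the square over $\theta$ and using the change of variables in $\theta$, together with uniform continuity of $\phi$ and the fact that $\int_0^T e^{-2t}\,dt$ is bounded, one gets that the $L^2_\theta$-norm of the above expression tends to $0$ as $T\to\infty$. Hence along a subsequence it tends to $0$ for a.e.\ $\theta$, and by a diagonal argument over a countable dense set of $\phi$'s and a countable dense set of $s$'s, for a.e.\ $\theta$ every limit point $\nu_\theta$ satisfies $\nu_\theta(\phi) = \nu_\theta(\phi\circ u_s)$ for all such $\phi, s$, hence for all $\phi \in C_c$ and all $s$.

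The one subtlety I would be careful about is that the measures $\eta_{T,\theta}\ast\delta_x$ could a priori lose mass in the limit (escape of mass into the cusp), so a limit point $\nu_\theta$ need only be a subprobability measure; but the statement as phrased only claims $P$-invariance of whatever limit points occur, and the correlation estimate above is insensitive to mass loss since it is phrased additively in $\phi$. (The fact that $\nu_\theta$ is in fact a probability measure giving no weight to proper affine submanifolds is the separate job of the integral-inequality argument with \verb|Proposition \ref{prop:EMM:main:proposition}|, used later.) The main obstacle is getting the correlation bound clean enough: one must handle the bounded-error term in $u_s g_t = g_t u_{se^{2t}}$ (it is not exactly a rotation), and one must justify the Fubini/change-of-variables step that converts the $\theta$-average of $|\phi(g_t r_\theta x) - \phi(u_s g_t r_\theta x)|$ into something controlled by the modulus of continuity of $\phi$ times a factor decaying in $t$. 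I expect this to be exactly the content of \verb|Lemma \ref{lemma:ft:fs:correlation:bound}| and would invoke it once established; the rest (strong law of large numbers / Borel--Cantelli over countable families, plus $g_t$-invariance of limit points, plus $P = \langle N, \{g_t\}\rangle$) is routine.
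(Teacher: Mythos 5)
Your proposal follows essentially the same route as the paper: reduce to invariance under the upper unipotent $u_s$ (since $g_t$-invariance of limit points is automatic and these generate $P$), relate $u_s g_t r_\theta x$ to $g_t r_{\theta+\alpha_t}x$ with $\alpha_t = O(se^{-2t})$ up to a correction tending to the identity applied last, exploit the exact zero mean $\int_0^{2\pi}f_t(\theta)\,d\theta=0$ together with exponential decorrelation of $f_t$ and $f_s$ in $|t-s|$, and finish with a strong-law argument over countable families of $\phi$ and $s$. (Your displayed identity is written backwards -- the relevant fact is $g_t u_s g_t^{-1}=u_{se^{2t}}$, packaged in the paper via $r_\xi=\bar u_{-\tan\xi}\,a_\xi\,u_{\tan\xi}$ -- but your description of how it is used is correct.)

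The one step that, as written, would fail is the deduction ``the $L^2_\theta$-norm tends to $0$, hence along a subsequence it tends to $0$ for a.e.\ $\theta$, and therefore every weak-star limit point is invariant.'' Subsequential a.e.\ convergence of $\frac1T\int_0^T f_t(\theta)\,dt$ controls only the limit points arising along that subsequence; the proposition requires that \emph{every} limit point as $T\to\infty$ be $P$-invariant, so you need $\frac1T\int_0^T f_t(\theta)\,dt\to 0$ for a.e.\ $\theta$ along the full family $T\to\infty$. This is exactly what the correlation bound buys beyond mere $L^2$-smallness: the bound $\int_0^{2\pi}f_tf_s\,d\theta\le Ce^{-\lambda|t-s|}$ gives variance $O(1/T)$ for the Ces\`aro average, Chebyshev plus Borel--Cantelli then give a.e.\ convergence along the quadratically spaced times $T=\epsilon n^2$, and the uniform boundedness and Lipschitz dependence of $f_t$ on $t$ interpolate to all $T$ (this is the paper's Lemma~\ref{lemma:strong:law:exponential:decay}). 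You list ``strong law of large numbers / Borel--Cantelli'' as routine at the end, so the ingredients are all present, but the explicit logical chain in the middle of your argument must be replaced by this quantitative version; otherwise the proof does not reach all limit points.
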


The proof of Proposition~\ref{prop:ae:theta:P:invariant} is based on
the strong law of large
numbers. In fact, 
Proposition~\ref{prop:ae:theta:P:invariant}  holds for arbitrary
measure-preserving $SL(2,\reals)$ actions. 

It is clear from the definition, that for any $\theta$, any weak-*
limit point $\nu_\theta$ is invariant under $g_t$. 
Let 
\begin{displaymath}
u_\alpha = \begin{pmatrix} 1 & \alpha \\ 0 &
  1 \end{pmatrix} \qquad \bar{u}_\alpha = \begin{pmatrix} 1 & 0 \\ \alpha &
  1 \end{pmatrix}.
\end{displaymath}
Hence it is enough to show that $\nu_\theta$ is
invariant under $u_\alpha$ for every $\alpha$. Fix $0 < \alpha < 1$.

A simple calculation \mc{check} shows that for $0 < \xi < \pi/2$,
\begin{displaymath}
r_\xi = \bar{u}_{-\tan \xi} a_\xi {u}_{\tan \xi}, \text{ where } a_\xi
= \begin{pmatrix} \cos \xi & 0 \\ 0 & 1/\cos \xi \end{pmatrix}.
\end{displaymath}
Then,
\begin{equation}
\label{eq:gt:rxi}
g_t r_\xi = (g_t \bar{u}_{-\tan \xi} g_t^{-1}) a_\xi (g_t {u}_{\tan \xi}
g_t^{-1} ) g_t = (\bar{u}_{-e^{-2t} \tan \xi}) a_\xi ({u}_{e^{2t} \tan \xi}) g_t.
\end{equation}
Let $\alpha_t$ be defined by the equation
\begin{equation}
\label{eq:def:theta:t}
e^{2t} \tan \alpha_t = \alpha.
\end{equation}
We claim that Proposition~\ref{prop:ae:theta:P:invariant} follows
quickly from the following: 
\begin{proposition}
\label{prop:shifted:averages}
Fix $x \in \cM$, and $0 < \alpha < 1$. 
Let $\phi \in C_c(\cM)$ be a test function. Let 
\begin{equation}
\label{eq:def:ft}
f_t(\theta) = \phi(g_t r_\theta x) - \phi(g_t r_{\theta+\alpha_t} x)
\end{equation}
where $\alpha_t$ is as in (\ref{eq:def:theta:t}). 
Then, for almost every $\theta \in [0,2\pi]$,
\begin{equation}
\label{eq:prop:shifted:averages}
\lim_{T \to \infty} \frac{1}{T} \int_0^T f_t(\theta) \, dt = 0. 
\end{equation}
\end{proposition}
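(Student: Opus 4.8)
The plan is to regard $\theta$, ranging over the circle $\reals/2\pi\zed$ with normalized Lebesgue measure, as the probability space, and to establish \eqref{eq:prop:shifted:averages} as a strong law of large numbers for the ``time process'' $t\mapsto f_t(\cdot)$. Two facts are needed: that each $f_t$ has integral zero over the circle, and that $f_s$ and $f_t$ decorrelate exponentially as $|t-s|\to\infty$. The first is immediate: the two terms in \eqref{eq:def:ft} differ only by the substitution $\theta\mapsto\theta+\alpha_t$, and Lebesgue measure on the circle is translation invariant (recall $r_{\theta+2\pi}$ acts as $r_\theta$), so $\int_0^{2\pi}f_t(\theta)\,d\theta=0$ for every $t$.

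The real content is the correlation bound, which is the ``geodesic flow uniformly expands short $SO(2)$-segments'' principle. Assume first that $\phi$ is Lipschitz (with a Lipschitz constant on a fixed compact neighborhood of $\operatorname{supp}\phi$; elsewhere $\phi\equiv 0$). Fix $0\le s\le t$ and partition the circle into arcs $J_1,\dots,J_N$ of equal length $\ell$. On each $J_j$ the function $f_s$ oscillates by at most $C_1e^{2s}\ell$: the tangent vector $\tfrac{d}{d\theta}(g_sr_\theta x)$ is the infinitesimal action of $\Ad(g_s)\xi_0$ at $g_sr_\theta x$, where $\xi_0$ generates the Lie algebra of $SO(2)$, and $\|\Ad(g_s)\xi_0\|\asymp e^{2s}$, so the arc $\{g_sr_\psi x:\psi\in J_j\}$ has length $\lesssim e^{2s}\ell$ near $\operatorname{supp}\phi$. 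On the other hand, writing $\int_{J_j}f_t\,d\theta=\int_{J_j}\phi(g_tr_\theta x)\,d\theta-\int_{J_j+\alpha_t}\phi(g_tr_\theta x)\,d\theta$ and using $|J_j\triangle(J_j+\alpha_t)|=2\alpha_t\le 2\alpha e^{-2t}$ (valid once $\alpha_t\le\ell$), one gets $|\int_{J_j}f_t\,d\theta|\le 2\|\phi\|_\infty\alpha e^{-2t}$. Splitting $\int_0^{2\pi}f_sf_t=\sum_j\bigl(\bar f_{s,j}\int_{J_j}f_t+\int_{J_j}(f_s-\bar f_{s,j})f_t\bigr)$ with $\bar f_{s,j}$ the mean of $f_s$ on $J_j$, this yields
\begin{displaymath}
\Bigl|\int_0^{2\pi}f_s(\theta)f_t(\theta)\,d\theta\Bigr|\le \frac{2\pi}{\ell}\cdot 4\|\phi\|_\infty^2\alpha e^{-2t}+\frac{2\pi}{\ell}\cdot C_1e^{2s}\ell\cdot 2\|\phi\|_\infty\cdot\ell .
\end{displaymath}
Choosing $\ell=e^{-(t+s)}$ — which satisfies $\ell\ge\alpha_t$ for every $t\ge s$ precisely because $0<\alpha<1$ — both terms become $O(e^{-(t-s)})$, so $|\int_0^{2\pi}f_sf_t\,d\theta|\le C_2e^{-(t-s)}$; for $|t-s|$ bounded the trivial bound $4\|\phi\|_\infty^2$ absorbs into the constant, giving $|\int_0^{2\pi}f_sf_t\,d\theta|\le C_3e^{-|t-s|}$ for all $s,t\ge 0$ (this is the assertion of Lemma~\ref{lemma:ft:fs:correlation:bound}).

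With mean zero and exponential decay of correlations in hand, $\int_0^{2\pi}\bigl(\int_0^Tf_t(\theta)\,dt\bigr)^2d\theta=\int_0^T\!\!\int_0^T\bigl(\int_0^{2\pi}f_sf_t\,d\theta\bigr)\,ds\,dt\le 2C_3T$, so $\frac1T\int_0^Tf_t\to 0$ in $L^2(d\theta)$ with rate $O(T^{-1/2})$. Along the subsequence $T_k=k^2$, summability of $T_k^{-1}$ and monotone convergence give $\frac1{T_k}\int_0^{T_k}f_t(\theta)\,dt\to 0$ for a.e.\ $\theta$; for $T\in[T_k,T_{k+1}]$ one interpolates using $\|f_t\|_\infty\le 2\|\phi\|_\infty$ together with $(T_{k+1}-T_k)/T_k\to 0$, obtaining \eqref{eq:prop:shifted:averages} for a.e.\ $\theta$ when $\phi$ is Lipschitz. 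For a general $\phi\in C_c(\cM)$, approximate uniformly by Lipschitz $\psi_n$; since the $f_t$ attached to $\phi-\psi_n$ are bounded by $2\|\phi-\psi_n\|_\infty$, on the full-measure intersection of the good sets for all $\psi_n$ we get $\limsup_T|\tfrac1T\int_0^Tf_t(\theta)\,dt|\le 2\|\phi-\psi_n\|_\infty\to 0$, which finishes the proof. Finally, Proposition~\ref{prop:ae:theta:P:invariant} follows by feeding \eqref{eq:prop:shifted:averages} into \eqref{eq:gt:rxi}: modulo the contracting $\bar u$-factor and the $a_{\alpha_t}\to I$ factor (both negligible by uniform continuity of $\phi$ on compacta), $\phi(g_tr_{\theta+\alpha_t}x)$ is $\phi(u_\alpha g_tr_\theta x)$ up to $o(1)$, so every weak-$*$ limit $\nu_\theta$ of $\eta_{T,\theta}\ast\delta_x$ is $u_\alpha$-invariant for all $0<\alpha<1$, hence $P$-invariant.

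The only non-formal step is the correlation bound of the second paragraph; everything else is the standard variance-plus-subsequence argument. Within that step the delicate point is legitimizing the estimate ``the arc $\{g_sr_\psi x:\psi\in J_j\}$ has length $\lesssim e^{2s}\ell$'' in the presence of non-compactness of $\cH_1(\alpha)$ and of $\phi$ being only locally Lipschitz: one must restrict the bookkeeping to a fixed compact neighborhood of $\operatorname{supp}\phi$ (on which the orbit map has uniformly bounded derivative, and through which the relevant arcs pass with controlled length), or else run the estimate entirely through the explicit decomposition \eqref{eq:gt:rxi} rather than through Lie-theoretic arc lengths. The role of the hypothesis $0<\alpha<1$ is exactly to guarantee $\alpha_t\le\ell=e^{-(t+s)}$ throughout the relevant range of $s\le t$, so no separate small-$t$ analysis is needed.
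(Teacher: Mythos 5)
Your proof follows the paper's own strategy essentially step for step: the mean-zero observation via translation invariance of Lebesgue measure, the exponential decorrelation bound (Lemma~\ref{lemma:ft:fs:correlation:bound}) obtained by partitioning the circle into arcs of length comparable to $e^{-(t+s)}$ — the paper uses a sliding window $A_\varphi$ of length $2e^{-2r}$ with $r=(t+s)/2$, which is the same scale — and then a variance/Chebyshev/Borel-Cantelli argument along $T_k=k^2$ with interpolation (the paper's Lemma~\ref{lemma:strong:law:exponential:decay}, which discretizes in $t$ first but is otherwise identical). The only cosmetic difference is that you run the variance estimate in continuous time and so bypass the $t$-discretization and its attendant Lipschitz-in-$t$ hypothesis; the rest, including the reduction to Lipschitz $\phi$ by uniform approximation, matches the paper.
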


\bold{Proof that Proposition~\ref{prop:ae:theta:P:invariant} follows
  from Proposition~\ref{prop:shifted:averages}.}
Let $x$, $\phi$, $\alpha$, $\alpha_t$ be as in
Proposition~\ref{prop:shifted:averages}. We need to prove that for
almost all $\theta$, 
\begin{equation}
\label{eq:bar:ualpha:need:to:prove}
\lim_{T \to \infty} \frac{1}{T} \int_{0}^T (\phi({u}_\alpha
g_t r_\theta x) - \phi(g_t r_\theta x)) \, dt = 0.
\end{equation}
Since the smooth functions are dense in $C_c(\mathcal{M})$, 
without loss of generality, we may assume that $\phi$ is smooth. Then,
there exists a constant $M$ such that for $h \in SL(2,\reals)$ near
the identity $I \in SL(2,\reals)$ and all $y \in \cM$,
\begin{equation}
\label{eq:phi:Lipshitz}
|\phi(h y) - \phi(y)| \le M \|h-I\|.
\end{equation}
We write
\begin{equation}
\label{eq:barualpha:two:terms}
\phi({u}_\alpha g_t r_\theta x) - \phi(g_t r_\theta x) = 
(\phi(u_\alpha g_t r_\theta x) - \phi(g_t r_{\theta +
  \alpha_t} x)) + (\phi(g_t r_{\theta+\alpha_t} x) - \phi(g_t r_\theta
x)). 
\end{equation}
Let $J_1$ be the contribution of the first term in parenthesis in
(\ref{eq:barualpha:two:terms}) 
to (\ref{eq:bar:ualpha:need:to:prove}) and let $J_2$ be the
contribution of the second term. We have, using (\ref{eq:gt:rxi}) and
(\ref{eq:def:theta:t}),
\begin{align*}
J_1 & = \lim_{T \to \infty} \frac{1}{T} \int_0^T \phi(u_{\alpha}
g_t r_\theta x) - \phi((\bar{u}_{e^{-2t} \tan \alpha_t}) a_{\alpha_t}
u_\alpha g_t r_\theta x) \, dt \\
& \le M \lim_{T \to \infty}
\frac{1}{T} \int_0^T \| \bar{u}_{e^{-2t} \tan \alpha_t}
  a_{\alpha_t} - I \| \, dt = 0,\\
\end{align*}
by (\ref{eq:phi:Lipshitz}) and $\alpha_t = O(e^{-t})$. Also $J_2 = 0$ by
Proposition~\ref{prop:shifted:averages}. Thus
(\ref{eq:bar:ualpha:need:to:prove}) holds.

This shows that for any fixed $0< \alpha < 1$ for almost all $\theta$,
the measures $\nu_\theta$ of 
Proposition~\ref{prop:ae:theta:P:invariant} are invariant under
${u}_\alpha$ (as well as $g_t$ for all $t$). We now repeat the proof with
two different $\alpha$'s linearly independent over $\ratls$. We get
that for almost all $\theta$, any limit point of $\eta_{T,\theta} \ast
\delta_x$ is invariant under a dense subgroup of ${P}$, hence
invariant under all of ${P}$. This
completes the proof of Proposition~\ref{prop:ae:theta:P:invariant}.
\qed\medskip

Note that from (\ref{eq:def:ft}) we have
\begin{equation}
\label{eq:ft:zero:mean}
\int_0^{2\pi} f_t(\theta) \, d\theta = 0.
\end{equation}

\begin{lemma}
\label{lemma:ft:fs:correlation:bound}
There exist $\lambda > 0$ and $C > 0$ such that
\begin{equation}
\label{eq:ft:fs:correlation:bound}
\int_{0}^{2\pi} f_t(\theta) f_s(\theta) \, d\theta \le C e^{-\lambda |s-t|}.
\end{equation}
\end{lemma}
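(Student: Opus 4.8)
\textbf{Proof proposal for Lemma~\ref{lemma:ft:fs:correlation:bound}.}

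The plan is to reduce the correlation estimate to the well-known fact that the Teichm\"uller geodesic flow (or rather, the $g_t$-action on the circle $\{r_\theta\}$) exponentially expands arc length, so that for $t$ large the function $\theta \mapsto \phi(g_t r_\theta x)$ becomes rapidly oscillating and its correlations with a similar function at time $s$ decay like $e^{-\lambda|s-t|}$. First I would use the factorization \eqref{eq:gt:rxi}: writing $g_t r_\theta = \bar{u}_{-e^{-2t}\tan\theta}\, a_\theta\, u_{e^{2t}\tan\theta}\, g_t$, one sees that as $\theta$ varies the point $g_t r_\theta x$ moves essentially along the (strongly stable) $u$-orbit direction with speed of order $e^{2t}$, up to a bounded contribution from $\bar u$ and $a_\theta$ that is $O(1)$ in the relevant range and whose $\theta$-derivative is $O(e^{-2t})$. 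Concretely, I would change variables from $\theta$ near a fixed $\theta_0$ to $u_{e^{2t}(\theta-\theta_0)}$-coordinates and note that $f_t$ is, up to negligible errors, a smooth function of the fast variable $e^{2t}\theta$.

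The key step is then the following: since $\phi$ is smooth and compactly supported, $f_t(\theta)$ (being a difference of two evaluations of $\phi$ along translated geodesics) is a smooth function of $\theta$ whose $L^2$-norm is $O(1)$ but whose oscillation scale is $\sim e^{-2t}$, and moreover $\int_0^{2\pi} f_t(\theta)\,d\theta = 0$ by \eqref{eq:ft:zero:mean}. WLOG assume $s \le t$. I would expand both $\phi(g_t r_\theta x)$ and $\phi(g_t r_{\theta+\alpha_t}x)$ (and similarly the $s$-terms) and use that $g_t r_\theta = g_{t-s}\,(g_s r_\theta)$, so $f_t(\theta) = F_{t-s}(g_s r_\theta x)$ where $F_\tau(y) = \phi(g_\tau y) - \phi(g_\tau (\text{small }u\text{-shift of }y))$, with the shift of size $O(e^{-2t})$. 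Plugging into $\int f_t f_s\,d\theta$ and again using the coordinate change $\theta \mapsto e^{2s}\theta$ induced by $g_s$, the integral becomes an integral over the $u$-orbit through $g_s r_\theta x$ of the product of $F_{t-s}$ against a fixed smooth function of the $u$-coordinate (coming from $f_s$); since $F_{t-s}$ is the difference of $\phi\circ g_{t-s}$ and its shift by $O(e^{-2(t-s)})$ (after the rescaling), integration by parts in the $u$-coordinate against the smooth bump from $f_s$ gains a factor $O(e^{-2(t-s)})$. This yields \eqref{eq:ft:fs:correlation:bound} with $\lambda = 2$ (any $\lambda < 2$ would do after absorbing lower-order terms), and $C$ depending on $\|\phi\|_{C^1}$ and the size of its support.

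The main obstacle I anticipate is bookkeeping the error terms uniformly in $\theta$ and $t$: the factorization \eqref{eq:gt:rxi} is only valid for $\theta$ in a fixed half-period away from the singular values $\theta = \pm\pi/2$, so one must either work with a partition of the circle into finitely many charts (using $r_{\pi/2}$-symmetry to handle the complementary arcs) or argue that the contribution of a shrinking neighborhood of the bad angles is itself exponentially small because there $\tan\theta$ is large and the compact support of $\phi$ together with the non-divergence estimates (Proposition~\ref{prop:EMM:main:proposition}, or simply Masur's criterion) force $g_t r_\theta x$ out of the support of $\phi$. A cleaner route, which I would actually pursue, is to avoid \eqref{eq:gt:rxi} entirely on the large scale and instead invoke directly the uniform exponential expansion of $SO(2)$-arcs under $g_t$ in the right-invariant metric (mentioned in the outline as ``the fact that geodesic flow uniformly expands small $SO(2)$ segments''): parametrize by the expanding horocycle, write $\int f_t f_s\,d\theta$ as a horocycle integral at the later time, and use smoothness of $\phi$ plus the mean-zero property to integrate by parts once, gaining $e^{-\lambda|t-s|}$. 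Either way, the decay rate $\lambda$ is governed purely by the expansion rate of $g_t$ on the circle and is independent of $x$, which is exactly what the subsequent strong-law-of-large-numbers argument in Proposition~\ref{prop:shifted:averages} requires.
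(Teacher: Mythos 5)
Your proposal is correct, and its core mechanism is sound: the two inputs are exactly the ones the paper uses, namely (i) $f_s$ is Lipschitz in $\theta$ with constant $O(e^{2s})$ (from $g_s r_\eta g_s^{-1}=I+O(\eta e^{2s})$ and \eqref{eq:phi:Lipshitz}), and (ii) $f_t$ is by definition a difference of two rotations of the same function by $\alpha_t=O(e^{-2t})$, so $\int_a^b f_t\,d\theta=O(e^{-2t})$ for \emph{every} interval $[a,b]$ — note this is an exact telescoping identity, so you do not actually need the approximate transfer formula $f_t(\theta)\approx F_{t-s}(g_s r_\theta x)$ or the factorization \eqref{eq:gt:rxi} at all, and your worry about the $\tan\theta$ charts evaporates. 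Where you differ from the paper is in how the two inputs are combined. The paper partitions the circle into arcs $A_\varphi$ of the geometric-mean scale $e^{-(s+t)}$, replaces $f_s$ by its value at the center (error $O(e^{-(t-s)})$ by (i)) and bounds the average of $f_t$ over each arc (error $O(e^{-(t-s)})$ by (ii)), balancing the two losses to get $\lambda=1$. Your single global summation-by-parts — $\int f_tf_s=[F_tf_s]_0^{2\pi}-\int F_tf_s'$ with $F_t(\theta)=\int_0^\theta f_t$, the boundary term vanishing by \eqref{eq:ft:zero:mean} and periodicity — multiplies the two gains instead of balancing them, giving $|F_t|=O(e^{-2t})$ against $|f_s'|=O(e^{2s})$ and hence the sharper $\lambda=2$. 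Both are fine for the lemma, which only needs some $\lambda>0$; your route is arguably cleaner and quantitatively better, at the cost of needing $f_s$ to be (a.e.) differentiable rather than merely approximately constant on small arcs, which the smoothness reduction already provides.
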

Figure 1 below helps describe the proof.
\color{black}

\bold{Proof.} Without loss of generality we may assume that
(\ref{eq:phi:Lipshitz}) holds, and that $t > s$. Let $r =
(t+s)/2$. Let $A_\varphi \subset [0,2\pi]$ be an interval of the form 
$[\varphi - e^{-2r}, \varphi + e^{-2r}]$. Then, for $\theta =
\varphi+\eta \in A_\varphi$,
\begin{displaymath}
g_s r_\theta = g_s r_\eta r_\varphi = (g_s r_\eta g_s^{-1}) g_s r_\varphi,
\end{displaymath}
and hence, using (\ref{eq:phi:Lipshitz}),
\begin{displaymath}
|f_s( \theta) - f_s(\varphi) | \le 4 M \|g_s r_\eta g_s^{-1} - I \|
\le 4 M e^{-2(r-s)}.
\end{displaymath}
Therefore, 
\begin{equation}
\label{eq:tmp:step:int:A:varphi}
\frac{1}{|A_\varphi|} \int_{A_\varphi} f_t(\theta) f_s(\theta) \,
d\theta = f_s(\varphi) \frac{1}{|A_\varphi|} \int_{A_\varphi}
f_t(\theta) \, d\theta + O(e^{-2(r-s)}).
\end{equation}
Now, from the definition (\ref{eq:def:ft}) of $f_t$, we have
\begin{equation}
\label{eq:telescoping}
\frac{1}{|A_\varphi|}\int_{A_\varphi} f_t(\theta) \, d\theta = O(e^{-2(t-r)}).
\end{equation}
(Essentially the integral cancels except for the contribution of two 
``boundary regions'' each of size $O(\theta_t) = O(e^{-2t})$. Since $f_t$ is
bounded and $|A_\varphi| = 2 e^{-2r}$, (\ref{eq:telescoping}) follows.)
Now from (\ref{eq:tmp:step:int:A:varphi}) and (\ref{eq:telescoping})
we get that for every $\varphi \in [0,2\pi]$,
\begin{displaymath}
\frac{1}{|A_\varphi|} \int_{A_\varphi} f_t(\theta) f_s(\theta) \,
d\theta = O(e^{-2(t-r)}) + O(e^{-2(r-s)}).
\end{displaymath}
Since $r = (s+t)/2$, this  immediately implies
(\ref{eq:ft:fs:correlation:bound}). 
\qed
\medskip

\begin{figure}[h]\label{fig:circles}\caption{We integrate over an interval $A_{\varphi} \subset S^1$ so that $g_rA_{\varphi}$ is of size 1. By our Lipshitz assumption on $\phi$, $f_s$ is basically constant on $A_{\varphi}$. Since $\alpha_t$ is negligible compared to $|A_{\varphi}|$ the integral in Equation (\ref{eq:bar:ualpha:need:to:prove}) cancels except for two negligible boundary terms.}
\begin{tikzpicture}
\draw (0,0) circle  (.5 cm)
circle (1.75 cm)
circle (3cm); 
\path (.70,.55) node {$g_sr_{\theta}x$};
\path (1.55,1.55) node
{$g_r r_{\theta}x$};
\path (2.5,2.5) node {$g_t r_{\theta}x$};
\draw[red,-] (-1.75,.55) arc (160:210:1.7);
\path[-,red] (-2.4,.3) node {$g_rA_\varphi$};
\end{tikzpicture}
\end{figure}
\color{black}

Now, Proposition~\ref{prop:shifted:averages} follows from the 
following straightforward version of the strong law of large numbers,
which we will prove in \S\ref{sec:subsec:strong:law:exponential:decay}
for the interested readers' convenience:

\begin{lemma}
\label{lemma:strong:law:exponential:decay}
Suppose $f_t: [0,2\pi] \to \reals$ are bounded functions satisfying
(\ref{eq:ft:zero:mean}) and (\ref{eq:ft:fs:correlation:bound}) (for
some $C > 0$ and $\lambda > 0$). Additionally, assume that $f_t(\theta)$ are $2M$-Lipshitz functions of $t$ for each $\theta$ (\ref{eq:phi:Lipshitz}).
 Then, for almost every $\theta \in [0,2\pi]$, (\ref{eq:prop:shifted:averages}) holds.
\end{lemma}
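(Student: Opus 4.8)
The plan is to run the standard $L^2$ strong law of large numbers: bound the variance of the time-average $F_T(\theta):=\frac1T\int_0^T f_t(\theta)\,dt$ using the decorrelation hypothesis (\ref{eq:ft:fs:correlation:bound}), deduce a.e.\ convergence along a sparse sequence of times, and then interpolate to all $T$ using the uniform bound on the $f_t$.

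\textbf{Variance estimate.} The Lipschitz dependence on $t$ of (\ref{eq:phi:Lipshitz}), together with measurability in $\theta$ (implicit in (\ref{eq:ft:fs:correlation:bound})), makes $F_T(\theta)$ well-defined and $(t,s,\theta)\mapsto f_t(\theta)f_s(\theta)$ jointly measurable, so Fubini applies and, using (\ref{eq:ft:fs:correlation:bound}),
\begin{displaymath}
\int_0^{2\pi} F_T(\theta)^2\,d\theta=\frac{1}{T^2}\int_0^T\int_0^T\Big(\int_0^{2\pi} f_t(\theta)f_s(\theta)\,d\theta\Big)\,dt\,ds\le\frac{1}{T^2}\int_0^T\int_0^T Ce^{-\lambda|t-s|}\,dt\,ds\le\frac{2C}{\lambda T}.
\end{displaymath}
Hence $\|F_T\|_{L^2[0,2\pi]}^2=O(1/T)$. (The zero-mean hypothesis (\ref{eq:ft:zero:mean}) is not needed for this step -- it is already built into the definition (\ref{eq:def:ft}) of $f_t$.)

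\textbf{Subsequence and interpolation.} Take $T_n=n^2$. Then $\sum_n\|F_{T_n}\|_{L^2}^2\le\sum_n 2C/(\lambda n^2)<\infty$, so by Tonelli the nonnegative function $\sum_n F_{T_n}(\theta)^2$ is integrable on $[0,2\pi]$, hence finite a.e., and therefore $F_{T_n}(\theta)\to0$ for a.e.\ $\theta$. To pass to all $T$, let $B:=\sup_{t\ge0,\theta}|f_t(\theta)|<\infty$ (in the application $|f_t(\theta)|\le2\|\phi\|_\infty$); for $T\in[T_n,T_{n+1}]$,
\begin{displaymath}
F_T(\theta)-F_{T_n}(\theta)=\frac1T\int_{T_n}^T f_t(\theta)\,dt+\Big(\frac1T-\frac1{T_n}\Big)\int_0^{T_n} f_t(\theta)\,dt,
\end{displaymath}
and each term is at most $B\,(T-T_n)/T_n\le B\,(T_{n+1}-T_n)/T_n=B(2n+1)/n^2$ in absolute value, which tends to $0$ uniformly in $\theta$. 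Combining the two displays gives $F_T(\theta)\to0$ for a.e.\ $\theta$, which is exactly (\ref{eq:prop:shifted:averages}).

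The argument is routine, with no deep obstacle; the one point needing a little care is the choice of the subsequence $T_n$, which must be sparse enough that $\sum_n 1/T_n<\infty$ and yet have consecutive terms close enough that the uniform bound $B$ controls the oscillation of $F_T$ across $[T_n,T_{n+1}]$ -- and $T_n=n^2$ achieves both. Once Lemma~\ref{lemma:strong:law:exponential:decay} is established, Proposition~\ref{prop:shifted:averages} is immediate, and Proposition~\ref{prop:ae:theta:P:invariant} and Theorem~\ref{theorem:birkhoff:flow} then follow by the arguments already indicated above.
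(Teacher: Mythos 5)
Your proof is correct, and the core strategy coincides with the paper's: bound the second moment of the time average using the exponential decorrelation, pass to the subsequence $T_n=n^2$ (where the tail is summable), and interpolate across $[T_n,T_{n+1}]$ using the uniform bound on $f_t$. The technical route differs in one worthwhile respect: the paper first discretizes time to the lattice $\epsilon\mathbb{N}$ (this is where the $2M$-Lipschitz hypothesis is used, to pass from the discrete average back to the continuous one), works with the finite sums $\sum_{i\le n} f_{\epsilon i}$, and then needs two separate interpolation steps (from $n^2$ to $n$ and from the lattice to continuous $T$), followed by a final use of Lipschitz. You instead keep the continuous integral $F_T$ throughout and run the $L^2$/Tonelli/Borel-Cantelli argument directly, so the Lipschitz hypothesis is demoted to a measurability remark and there is only one interpolation. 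This is a mild but real simplification; it also makes transparent your correct observation that the zero-mean condition (\ref{eq:ft:zero:mean}) is not invoked -- the correlation bound (\ref{eq:ft:fs:correlation:bound}) alone (at $s=t$ it already gives $\int f_t^2\le C$) suffices. One small point of care you handle correctly: (\ref{eq:ft:fs:correlation:bound}) is only a one-sided (upper) bound, but since $F_T^2\ge0$ an upper bound on the integrand is all that is needed in the Fubini step.
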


\subsection{Proof of Lemma~\ref{lemma:strong:law:exponential:decay}}
\label{sec:subsec:strong:law:exponential:decay}
We recall the following basic facts:
\begin{lemma}[Chebyshev inequality] Let $f:\Omega\to \mathbb{R}$ have 
$\int_{\Omega}f(\omega)^2d\nu\leq C$. Then 
$$\nu(\{\omega:|f(\omega)|>sC\})\leq \frac {1}{s^2C}.$$
\end{lemma}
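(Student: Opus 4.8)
The statement to prove is: if $f\colon\Omega\to\mathbb{R}$ satisfies $\int_\Omega f(\omega)^2\,d\nu\le C$, then $\nu(\{\omega:|f(\omega)|>sC\})\le \frac{1}{s^2C}$.

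The plan is to run the standard Markov/Chebyshev argument applied to $f^2$. First I would fix $s>0$ and set $E=\{\omega:|f(\omega)|>sC\}$. On $E$ we have $f(\omega)^2>s^2C^2$ pointwise, so $f^2\ge s^2C^2\,\chi_E$ everywhere on $\Omega$ (the inequality is trivial off $E$ since the right-hand side vanishes there). Integrating this pointwise inequality against the measure $\nu$ and using monotonicity of the integral gives
\begin{displaymath}
C\;\ge\;\int_\Omega f(\omega)^2\,d\nu\;\ge\;\int_\Omega s^2C^2\,\chi_E\,d\nu\;=\;s^2C^2\,\nu(E).
\end{displaymath}
Dividing both sides by $s^2C^2$ (which is positive, assuming $C>0$; the case $C=0$ is vacuous or trivial since then $f=0$ $\nu$-a.e. and $E$ is null) yields $\nu(E)\le \dfrac{C}{s^2C^2}=\dfrac{1}{s^2C}$, which is exactly the claimed bound.

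There is essentially no obstacle here: the only points requiring a word of care are that $f^2$ is $\nu$-integrable (guaranteed by the hypothesis $\int f^2\,d\nu\le C<\infty$), that $C>0$ so the final division is legitimate (if $C=0$ the statement is either vacuous or immediate), and that the set $E$ is measurable (it is, as a preimage of an open set under the measurable function $|f|$). Thus the proof is a two-line estimate; I would present it exactly as the display above together with the passage from $f^2\ge s^2C^2\chi_E$ to the conclusion.
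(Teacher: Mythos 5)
Your proof is correct: it is the standard Markov--Chebyshev estimate applied to $f^2$ with threshold $t=sC$, and the handling of the degenerate case $C=0$ is fine. The paper states this lemma without proof as a recalled basic fact, so there is nothing to compare against; your two-line argument is exactly what is intended.
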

\begin{lemma}[Borel-Cantelli] Let $A_1,...$ be $\mu$-measurable sets
  such that  
${\sum_{i=1}^{\infty}\mu(A_i)<\infty.}$ Then 
$\mu(\cap_{i=1}^{\infty}\cup_{n=i}^{\infty}A_n)=0$. 
\end{lemma}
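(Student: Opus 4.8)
The plan is the standard one: bound the measure of the $\limsup$ set $B := \bigcap_{i=1}^{\infty}\bigcup_{n=i}^{\infty}A_n$ by the tails of the convergent series $\sum_i \mu(A_i)$. First I would fix an arbitrary index $i$ and observe that, by definition of intersection, $B \subseteq \bigcup_{n=i}^{\infty}A_n$. Hence by countable subadditivity of $\mu$,
\[
\mu(B) \;\le\; \mu\!\left(\bigcup_{n=i}^{\infty}A_n\right) \;\le\; \sum_{n=i}^{\infty}\mu(A_n).
\]

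Next I would invoke the hypothesis $\sum_{n=1}^{\infty}\mu(A_n) < \infty$: the partial sums of a convergent nonnegative series form a Cauchy sequence, so the tail $\sum_{n=i}^{\infty}\mu(A_n)$ tends to $0$ as $i \to \infty$. Since the inequality $\mu(B) \le \sum_{n=i}^{\infty}\mu(A_n)$ holds for every $i$, we may take $i \to \infty$ (or take the infimum over $i$) to conclude $\mu(B) \le 0$, hence $\mu(B) = 0$, as desired.

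There is essentially no obstacle here; the only points requiring any care are that $\mu$ need not be a probability measure (but subadditivity and monotonicity hold for any measure, so the argument is unaffected) and that each $\bigcup_{n=i}^{\infty}A_n$ and the intersection $B$ are measurable, which follows from the $A_n$ being measurable and the $\sigma$-algebra being closed under countable unions and intersections. No use of Chebyshev or any quantitative estimate is needed; this lemma is recorded only because it will be combined with the Chebyshev bound in the proof of Lemma~\ref{lemma:strong:law:exponential:decay}.
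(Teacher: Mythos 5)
Your proof is correct and is the standard argument: monotonicity plus countable subadditivity give $\mu(B)\le\sum_{n\ge i}\mu(A_n)$ for every $i$, and the convergent-series tail forces $\mu(B)=0$. The paper simply recalls this lemma as a basic fact without proof, so there is nothing to compare against; your write-up (including the remark that no probability normalization is needed) is exactly what one would supply.
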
 

\begin{proof}[Proof of Lemma~\ref{lemma:strong:law:exponential:decay}]
First, because $f_t(\theta)$ is an $2M$-Lipshitz function of $t$ for
each $\theta$ it suffices to show that for any $\epsilon$ and almost
every $\theta$ we have:
\begin{equation}
\label{eq:strong:law:need:to:prove}
\underset{n \to \infty}{\limsup}\left|\frac 1 n \sum_{i=1}^n f_{\epsilon i}(\theta)\right|<\epsilon. 
\end{equation}
We will show that (\ref{eq:strong:law:need:to:prove}) follows from 
(\ref{eq:ft:fs:correlation:bound}), 
the Borel-Cantelli lemma, and Chebyshev's inequality.

To see this, observe that $\int (\sum_{i=1}^n f_{\epsilon i}(\theta))^2= \int \sum_{i=1}^nf_{\epsilon i}(\theta)^2+2C\sum_{i<j<n}e^{-\lambda |j\epsilon-i\epsilon|}.$
So, there exists $C'_{\epsilon}$ such that $\int (\sum_{i=1}^n f_{\epsilon i}(\theta))^2\leq nC'_{\epsilon}$. By Chebyshev's inequality:

\noindent there exists $C''_{\epsilon}$ such that
$\lambda(\{\theta: \left|\frac 1 n\sum_{i=1}^n f_{\epsilon
    i}(\theta)\right|>\frac{\epsilon}{2}\})\leq
\frac{C''_{\epsilon}}{n}$. 
By the Borel-Cantelli Lemma
\noindent it follows that for almost every $\theta$ we have 
$$ \underset{n \to \infty}{\limsup}\left|\frac 1 {n^2}
  \sum_{i=1}^{n^2} f_{\epsilon i}(\theta)\right|\leq
\frac{\epsilon}{2}.$$ 
Notice $(N+1)^2-N^2=2N+1$ and so for any $M>0$ there exists $N\in \mathbb{N}$ such that $0\leq M-N^2<2\sqrt{M}$.
 It follows that for all large enough $M$ 
$$\left|\frac 1 M\sum_{i=1}^M f_{i\epsilon}(\theta)\right| \leq \left|\frac
1 {M} \sum_{i=1}^{N^2}f_{i\epsilon}(\theta)+\frac 1 M
\sum_{i=N^2+1}^M f_{i\epsilon}(\theta)\right| \leq \frac{\epsilon}{2}
+\frac{2\hat{C}\sqrt{M}}{M}.$$ 
This uses that the $f_{t}$ are uniformly bounded.
For all large enough $M$ this is smaller than $\epsilon$ and
Lemma~\ref{lemma:strong:law:exponential:decay} follows.
\end{proof}

\subsection{Completion of the proof of
  Theorem~\ref{theorem:birkhoff:flow}.}

\begin{prop}[\protect{\cite[Proposition 2.13]{EMM}}]
\label{prop:EMiMo:main} 
Let $\cN$ be any affine submanifold. 
Then there exists an $SO(2)$ invariant function
$f_\cN: \cH_1(\alpha) \to \reals_+$, $c,b \in \mathbb{R}$  such that
\begin{enumerate}
\item $f_\cN(x)=\infty$ iff $x \in \cN$. Also $f_\cN$ is bounded on
  compact subsets of $\mathcal{H}_1(\alpha) \setminus \cN$. Also
  $\overline{\{x:f_\cN(x)\leq N\}}$ is compact for any $N$. 
\item There exists  $b > 0$ (depending on $\cN$) and for
  every $0 < c < 1$   there exists $t_0 > 0$ (depending on $\cN$ and
  $c$) such that for all $x \in \cH_1(\alpha)$ and all $t > t_0$,
$\frac{1}{2\pi} \int_0^{2\pi} f_\cN(g_t r_\theta x)\, d\theta \leq c
f_\cN(x)+b$,  
\item For any $g \in SL(2,\mathbb{R})$ and $\|g\|\leq 1$ we have
  $f_\cN(gx)\leq \sigma' f_\cN(x)$. 
\end{enumerate}
\end{prop}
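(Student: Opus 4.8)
The plan is to build $f_\cN$ as a sum $f_\cN=f_\emptyset+C_\cN h_\cN$, where $f_\emptyset$ controls the escape of $x$ to the boundary of $\cH_1(\alpha)$ (degeneration of the flat structure), $h_\cN$ blows up along $\cN$, and $C_\cN>0$ plays no essential role. For the base case $\cN=\emptyset$ take $f_\emptyset$ to be the Eskin--Masur properness function --- morally a truncated form of $\sum_v\|v\|^{-1}$, summed over holonomies $v$ of short saddle connections on $x$ and normalized so $f_\emptyset\ge 1$. That $f_\emptyset$ is $SO(2)$-invariant, bounded on compacta, with $\overline{\{f_\emptyset\le N\}}$ compact, is Masur's criterion; property (3) holds because a saddle connection changes length by a bounded multiplicative factor under a $g\in SL(2,\reals)$ of bounded size. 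Property (2) for $f_\emptyset$ is the main estimate of \cite{Eskin:Masur} (in the tradition of \cite{Eskin:Margulis:Mozes:31}; see also \cite{Ath thesis}): a fixed short $v$ has $\|g_tr_\theta v\|$ small only for $\theta$ in a set of measure $O(e^{-2t})$, and $\gtrsim e^t\|v\|$ for the rest, so $\frac1{2\pi}\int_0^{2\pi}\|g_tr_\theta v\|^{-1}\,d\theta\le(\mathrm{const})\,e^{-t}\|v\|^{-1}$; a combinatorial bound on how many short saddle connections can coexist promotes this to $\frac1{2\pi}\int_0^{2\pi}f_\emptyset(g_tr_\theta x)\,d\theta\le c_0f_\emptyset(x)+b_0$ with $c_0<1$, for $t\ge t_0$. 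The same construction produces, for any prescribed polynomial growth rate at the boundary, a variant $f_\emptyset'$ satisfying (1), (3) and the same contraction --- a flexibility used below.

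For general $\cN$, put $h_\cN(x)=\max\bigl(1,\ d(x,\cN)^{-\beta}\bigr)$ for a small $\beta>0$, with $d$ an $SO(2)$-invariant distance read off from period coordinates; (1) and (3) for $h_\cN$ are clear and the content is (2). Since $\cN$ is affine invariant, near a compact part of $\cN$ and away from the boundary of $\cH_1(\alpha)$ it is an affine subspace in period coordinates, $d(x,\cN)\asymp\|w(x)\|$ for a transverse displacement vector $w(x)$, and the evolution of $w$ under $g_tr_\theta$ is governed by the restriction of the Kontsevich--Zorich cocycle to the normal bundle of $\cN$ tensored with the standard $SL(2,\reals)$-action on the $\cx$-coordinate. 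For all but an $O(e^{-2t})$-measure set of $\theta$ the $SL(2,\reals)$-factor expands $w$ by $\asymp e^t$, while the monodromy factor --- since the top Kontsevich--Zorich exponent is simple, so the normal bundle carries only exponents in $(-1,1)$ (a consequence of \cite{Forni:Deviation}) --- distorts $w$ by at most $e^{(1-\delta)t}$ for most $\theta$; so for most $\theta$ the net effect is expansion by a definite positive power of $e^t$, and the $f_\emptyset$-computation (with $\beta$ small enough that the singular locus contributes a convergent, $t$-decaying integral) gives $\frac1{2\pi}\int_0^{2\pi}h_\cN(g_tr_\theta x)\,d\theta\le c_1h_\cN(x)+b_1$ with $c_1<1$, for $t\ge t_1$.

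Then $f_\cN=f_\emptyset+C_\cN h_\cN$ satisfies (1), since $f_\emptyset$ is finite everywhere on $\cH_1(\alpha)$ while $h_\cN$ is finite off $\cN$ and $+\infty$ on $\cN$, and $\{f_\cN\le N\}\subset\{f_\emptyset\le N\}\cap\{d(\cdot,\cN)\ge(N/C_\cN)^{-1/\beta}\}$ has compact closure disjoint from $\cN$; it satisfies (3) with $\sigma'=\max(\sigma'_\emptyset,\sigma'_\cN)$; and it satisfies (2) by adding the two inequalities, giving $\frac1{2\pi}\int_0^{2\pi}f_\cN(g_tr_\theta x)\,d\theta\le\max(c_0,c_1)f_\cN(x)+(b_0+C_\cN b_1)$ for $t\ge\max(t_0,t_1)$, whence $c=\max(c_0,c_1)<1$, $b=b_0+C_\cN b_1$. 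Passing from $t\ge t_0$ to all $t$, and likewise deriving the convolution form of Proposition~\ref{prop:EMM:main:proposition}, is routine given (3).

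The main obstacle is property (2) for $h_\cN$ exactly where $\cN$ approaches the boundary of $\cH_1(\alpha)$, i.e. where both $f_\emptyset$ and $h_\cN$ are large: there the comparison $d(x,\cN)\asymp\|w(x)\|$, the linear model for the transverse cocycle, and the "most $\theta$" estimates all pick up constants that degenerate as $f_\emptyset\to\infty$, and controlling them quantitatively --- essentially a non-asymptotic form of the exponent bound for the transverse cocycle, together with a large-deviation estimate bounding how many directions $\theta$ can be atypical --- is the real work. The standard fix, following \cite{EMM}, is to run the transverse argument only where $f_\emptyset$ stays below a large threshold and to absorb the rest by replacing $f_\emptyset$ with a faster-growing variant $f_\emptyset'$ as above, chosen so that $f_\emptyset'$ dominates the leftover error there and sweeps it into the $+b$ term. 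Choosing $\beta$, the growth rate of $f_\emptyset'$, and $C_\cN$ compatibly with those degenerating constants is the technical heart; carried out in full this is \cite[Proposition~2.13]{EMM}.
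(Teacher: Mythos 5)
You should first be aware that the paper does not prove this proposition at all: it is quoted, with attribution, from \cite[Proposition~2.13]{EMM} (and its convolution form, Proposition~\ref{prop:EMM:main:proposition}, likewise from \cite{EMM}), and is used as a black box in the proofs of Theorems~\ref{theorem:birkhoff:walk} and \ref{theorem:birkhoff:flow}. So there is no internal argument to measure your proposal against; the only meaningful comparison is with the construction in \cite{EMM} itself. Against that benchmark, your outline correctly identifies the overall strategy (a Margulis-type integral inequality in the tradition of \cite{Eskin:Margulis:Mozes:31}, with the Eskin--Masur function \cite{Eskin:Masur} handling $\cN=\emptyset$ and a transverse-distance function handling general $\cN$), and it correctly locates where the difficulty sits. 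But it is an outline, not a proof, and the step it defers is the entire content of the proposition.

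Concretely, the gap is your property (2) for $h_\cN$. You derive it from an Oseledets-type statement --- that the cocycle on the normal bundle of $\cN$ has exponents in $(-1,1)$, so that ``for most $\theta$'' the transverse displacement expands --- but an almost-everywhere asymptotic statement about Lyapunov exponents cannot, by itself, yield an inequality of the form $\frac{1}{2\pi}\int_0^{2\pi}h_\cN(g_tr_\theta x)\,d\theta\le c\,h_\cN(x)+b$ that holds for a \emph{fixed} $t>t_0$ and for \emph{every} $x$, including $x$ near the boundary of the stratum, which is exactly where the proposition must be applied (it is what rules out escape of mass). What \cite{EMM} actually uses is Forni's pointwise, uniform bound on the logarithmic derivative of the Hodge norm on the orthogonal complement of the tautological plane (not merely the a.e.\ exponent bound $\lambda_2<1$), together with a distance to $\cN$ measured in a carefully chosen norm (the modified Hodge/AGY norm) for which the comparison constants do not degenerate at the boundary; with a generic ``$SO(2)$-invariant distance read off from period coordinates'' the constants in $d(x,\cN)\asymp\|w(x)\|$ and in the linear model for the transverse flow do blow up, and the argument does not close. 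Relatedly, the additive combination $f_\cN=f_\emptyset+C_\cN h_\cN$ only inherits (2) if each summand satisfies (2) separately, and (2) for $h_\cN$ is exactly what fails to be established where $\cN$ meets the boundary; the actual construction in \cite{EMM} couples the two functions through exponents chosen so that the growth of the Eskin--Masur part dominates the transverse error, which your final paragraph names as ``the technical heart'' but does not carry out. As a blind reconstruction of the cited result your proposal is a fair road map; as a proof it is missing precisely the uniform, non-asymptotic estimates that make the proposition true.
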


\begin{theorem}[\protect{\cite[Theorem 2.3]{Ath thesis}}] 
\label{theorem:athreya}
Given a function $f_\cN$ satisfying (2) and (3) of
Proposition~\ref{prop:EMiMo:main}, 
 we have that for any $0<\beta<1$ there exist $M < \infty$ and $\gamma
 < 1$ such that for every $x$ we have 
$$\lambda\left(\left\{\theta\st f(g_t
  r_{\theta}x)>M \quad \text{\rm for at least $\beta$-fraction of $t \in [0,T]$} \right\}\right)<\gamma^T $$ 
for all large enough $T$.
\end{theorem}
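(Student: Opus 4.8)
The plan is to derive this from the two properties of $f_\cN$ in Proposition~\ref{prop:EMiMo:main} — the averaged contraction (2) and the tempered distortion (3) — by discretizing the flow, reducing to a "number of bad scales" random variable along a dyadic refinement of the circle of directions, and running a Chernoff-type exponential-moment estimate; this is the strategy of \cite{Eskin:Margulis:Mozes:31} adapted to the Teichm\"uller flow, as in \cite[Theorem~2.3]{Ath thesis}. Concretely, fix $\beta$, pick a contraction rate $c<1$ (how small is dictated by an absolute constant from the next step) and let $\tau>t_0(c)$ be as in (2), so $\frac{1}{2\pi}\int_0^{2\pi}f_\cN(g_\tau r_\phi y)\,d\phi\le c\,f_\cN(y)+b$ for every $y$. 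Iterating (3) shows $f_\cN$ changes by at most a fixed factor $D_0=D_0(\tau)$ along any geodesic segment of length $\le\tau$. Hence if $f_\cN(g_tr_\theta x)>M$ on a $\beta$-fraction of $t\in[0,K\tau]$, then at least a $\beta/2$-fraction of the windows $[k\tau,(k+1)\tau]$, $k<K$, meet $\{f_\cN>M\}$, so $a_k(\theta):=f_\cN(g_{k\tau}r_\theta x)>M/D_0=:M'$ there. It therefore suffices to bound, uniformly in $x$, the Lebesgue measure of the discrete bad event $\{\theta:\#\{k<K:a_k(\theta)>M'\}\ge(\beta/2)K\}$ by $\gamma^{K\tau}$ for all large $K$, with the threshold allowed to depend on $x$.

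\emph{The one-step contraction along the dyadic refinement.} Partition $[0,2\pi)$ into level-$k$ arcs of length $\asymp e^{-2k\tau}$; each level-$(k{-}1)$ arc $J$ then splits into $e^{2\tau}$ level-$k$ sub-arcs. The geometric core is that, via the identity $g_t r_\xi=\bar u_{-e^{-2t}\tan\xi}\,a_\xi\,u_{e^{2t}\tan\xi}\,g_t$ of (\ref{eq:gt:rxi}), the map $\eta\mapsto g_{k\tau}r_{\theta_J+\eta}x$ on $J$ is — up to a multiplicative distortion of $f_\cN$ tending to $1$ as $k\to\infty$ — a linearly reparametrized piece, of bounded length, of the unstable horocycle through $q_J:=g_{(k-1)\tau}r_{\theta_J}x$, flowed one more step by $g_\tau$; while the $SO(2)$-average in (2) based at $q_J$, restricted to $\phi\in[-\phi_0,\phi_0]$ for a fixed $\phi_0<\pi/2$ and rewritten in the variable $s=e^{2\tau}\tan\phi$, is comparable (with constants independent of $\tau$, again using (3) on bounded-norm elements) to exactly that horocycle average. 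Combining these yields a $\tau$-independent constant $C'$ with
\[
\frac{1}{\#\{I\subset J\}}\sum_{I\subset J} a_k(I)\ \le\ C'c\,a_{k-1}(J)+C'b\qquad\text{uniformly in }J,
\]
for all $k$ past a threshold depending on $x$. We now fix $c<1/(2C')$, so the one-step factor $C'c$ is $<1/2$.

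\emph{Chernoff estimate.} Put $\psi(s)=1+\kappa s$ and $X_K(\theta)=\#\{k\le K:a_k(\theta)>M'\}$, with $\kappa>0$ small and $\rho>1$ chosen so that $\rho<(C'c)^{-1/(1-\beta/2)}$ (possible since $C'c<1$). Using $\mathbf 1[s>M']\psi(s)\le(\kappa+1/M')s$ and the one-step contraction, conditioning on the level-$(K{-}1)$ partition — on whose atoms $X_{K-1}$ and $a_k$ ($k<K$) are essentially constant — gives
\[
\int_0^{2\pi}\rho^{X_K}\psi(a_K)\,d\theta\ \le\ D'\!\int_0^{2\pi}\rho^{X_{K-1}}\psi(a_{K-1})\,d\theta ,
\]
and for $\kappa$ small and $M'$ large one checks $D'<\rho^{\beta/2}$. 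Iterating down to a fixed scale $k_1=k_1(x)$, where the integral is finite by (2), and using $\rho^{X_K}\psi\ge\rho^{(\beta/2)(K-k_1)}$ on the discrete bad event, bounds that event by $R_0(x)(D'/\rho^{\beta/2})^{K-k_1}$; this is $<\gamma^{K\tau}$ for any $\gamma\in\bigl((D'/\rho^{\beta/2})^{1/\tau},1\bigr)$ once $K$ is large. Taking $M=D_0M'$ finishes the proof.

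\emph{Main obstacle.} Everything but the one-step contraction is routine bookkeeping; the real work is extracting a clean estimate that is uniform in $x$, in the base point $q_J$, and in $\tau$ from the averaged inequality (2), which a priori controls only full $SO(2)$-orbit averages. The change of variables coming from (\ref{eq:gt:rxi}) converts a fixed angular window of the $SO(2)$-average into a long unstable-horocycle average, which is then matched with the horocycle arcs traced out by the dyadic refinement of the circle of directions; keeping all the intervening distortions under multiplicative control in the regime where $f_\cN$ is large is the delicate point, and is essentially the content of \cite[Theorem~2.3]{Ath thesis}, which one may alternatively just quote.
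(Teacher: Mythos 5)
The paper does not prove this statement: it is quoted verbatim as \cite[Theorem~2.3]{Ath thesis}, so there is no internal proof to compare against. Your reconstruction follows the standard route of that source (and of \cite[Section~5]{Eskin:Margulis:Mozes:31}, \cite{EMM}): discretize time in steps of $\tau$, convert the full $SO(2)$-average in (2) into a one-step averaged contraction along the tree of dyadic arcs of directions via the decomposition (\ref{eq:gt:rxi}) and the distortion property (3), and then run a Chernoff/exponential-moment iteration. The sketch is essentially correct, and you correctly isolate the only genuinely delicate point, namely that (2) controls the average over the whole circle while the tree requires an average over a sub-arc based at $q_J$; your observation that $f_\cN\ge 0$ lets you restrict the average at the cost of an absolute constant $C'$, compensated by choosing $c<1/(2C')$ afterwards, is exactly the right fix, and the arithmetic in the Chernoff step (the constraint $\rho<(C'c)^{-1/(1-\beta/2)}$ and the choice of $\kappa$ small, $M'$ large) checks out. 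Two minor caveats. First, $a_k$ and $X_{K-1}$ are only constant on level-$(K-1)$ atoms up to a bounded multiplicative factor coming from (3), so the indicator $\mathbf 1[a_k>M']$ is not literally constant there; this is handled by working with two nested thresholds (or by absorbing the factor into $M'$ and $D'$), which is routine but should be said. Second, your thresholds $k_1(x)$ and $R_0(x)$ depend on $x$ through $f_\cN(x)$, so the ``for all large enough $T$'' in your conclusion is $x$-dependent; this is consistent with how the theorem is applied in the paper (Borel--Cantelli for a single fixed $x$), but note that the constants $M$ and $\gamma$ themselves must not depend on $x$, which your construction does deliver.
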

The following is similar of the proof of Theorem~\ref{theorem:birkhoff:walk}.

\bold{Proof of Theorem~\ref{theorem:birkhoff:flow}.}
Let $\nu_\theta$ be any weak-star limit point of the measures 
$\eta_{T,\theta} \ast \delta_x$. By
Proposition~\ref{prop:ae:theta:P:invariant}, for almost all $\theta$,
$\nu_\theta$ is ${P}$-invariant. 

By \cite[Theorem~1.4]{EM}, any ergodic ${P}$-invariant measure is  
$SL(2,\reals)$-invariant and affine. 
Therefore, since $\nu_\theta$ is supported on
$\cM$, it has can be decomposed into ergodic components as
\begin{displaymath}
\nu_\theta = \sum_{\cN \subseteq \cM} a_\cN(\theta) \, \nu_\cN,
\end{displaymath}
where $a_\cN(\theta) \in [0,1]$ and 
the sum is over the affine invariant submanifolds $\cN$ contained in
$\cM$. (Here $\cN = \cM$ is allowed). By \cite[Proposition~2.16]{EMM}
this is a countable sum. By applying Theorem~\ref{theorem:athreya}
for the case $\cN = \emptyset$ we get that for almost all $\theta$, 
$\nu_\theta$ is a probability measure. 
Indeed, for every $\beta>0$ if $L$ is large enough then 
$$\lambda(\{\theta:  
|\{t<L:f(g_sr_{\theta}x)>M\}|>\beta L\})\leq \gamma^L.$$ Thus if $S$ is large enough 
$$\lambda(\{\theta: \exists L>S \text{ such that } 
|\{t<L:f(g_sr_{\theta}x)>M\}|>\beta L\})\leq \sum_{j=S}^{\infty}\gamma_{M}^j<\infty$$ 
and the claim follows by Borel-Cantelli.
\color{black}
Then, by applying
Theorem~\ref{theorem:athreya} again with $\cN$  any affine invariant
submanifold properly contained in $\cM$,  we see that for
almost all $\theta$, $\nu_\theta(\cN) = 0$. 
Thus, for almost all $\theta$,  $a_\cN(\theta) = 0$ for any $\cN$ 
properly contained in $\cM$. Since $\nu_\theta$ is a probability
measure, this forces $\nu_\theta = \nu_\cM$ for almost all $\theta$, 
completing the proof of Theorem~\ref{theorem:birkhoff:flow}.  
\qed\medskip

\section{Proof of Theorem~\ref{theorem:lyapunov:top:flow}}
\label{sec:proof:osceledts:flow}

Let $\mu$ be as in \S\ref{sec:random:walks}. The following lemma
expresses the well known fact that a typical random walk trajectory
tracks a geodesic (up to sublinear error).  
\begin{lemma}[Sublinear Tracking]
\label{lemma:sublinear:tracking}
There exists $\lambda >0$ (depending only on $\mu$), and 
$\mu^\natls$-almost all $\bar{g} = (g_1, \dots, g_n, \dots) \in SL(2,\reals)^\natls$ there
exists $\bar{\theta} = \bar{\theta}(\bar{g}) \in [0,2\pi)$ such that 
\begin{equation}
\label{eq:lemma:sublinear:tracking}
\lim_{n \to \infty} \frac{1}{n} \log \| (g_{\lambda n} r_{\bar{\theta}})
(g_n \dots g_1)^{-1} \| = 0,
\end{equation}
where $g_{\lambda n}$ denotes $\begin{pmatrix} e^{\lambda
    n} & 0 \\ 0   & e^{-\lambda n} \end{pmatrix}$. Furthermore, the
distribution of $\bar{\theta}$ is uniform, i.e
\begin{equation}
\label{eq:uniform:hitting:measure}
\mu^\natls\left( \{ \bar{g} \in SL(2,\reals)^\natls \st
  \bar{\theta}(\bar{g}) \in [\theta_1, \theta_2] \} \right) =
\frac{|\theta_2 - \theta_1|}{2\pi}.
\end{equation}
\end{lemma}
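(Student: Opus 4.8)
The plan is to deduce this from the Oseledets multiplicative ergodic theorem applied to the $SL(2,\reals)$-valued cocycle $\bar{g}\mapsto g_n\cdots g_1$, i.e.\ to the left random walk on $SL(2,\reals)$ itself (not on $\cH_1(\alpha)$). Since $\mu$ is $SO(2)$-bi-invariant, absolutely continuous and compactly supported, the walk on $SL(2,\reals)$ is a classical random matrix product, and its top Lyapunov exponent $\lambda>0$ is strictly positive (the measure is not supported on a compact subgroup, so the Furstenberg positivity criterion applies). Set $a_n = g_n\cdots g_1$. By Oseledets (in the form stated in Theorem~\ref{theorem:osceledets:ae:walk}, specialized to the standard action on $\reals^2$), for $\mu^\natls$-a.e.\ $\bar g$ the limit $\Lambda(\bar g) = \lim_n (a_n^* a_n)^{1/2n}$ exists, has eigenvalues $e^{\lambda} > e^{-\lambda}$, and, writing the polar decomposition $a_n = K_n D_n L_n$ with $K_n, L_n \in SO(2)$ and $D_n = \diag(e^{\mu_n}, e^{-\mu_n})$, $\mu_n > 0$, one has $\tfrac1n\mu_n\to\lambda$ and the Oseledets filtration $\cV_1(\bar g)$ (the most-contracted line, the image of $e_2$ under $\lim L_n^{-1}$... ) converges. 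Concretely, both $K_n$ and $L_n$ converge projectively: $K_n\to K_\infty(\bar g)$ in $PSO(2)$ and likewise the flag $L_n^{-1}e_2$ stabilizes.

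First I would record the polar-decomposition bookkeeping: the singular values of $a_n$ are $e^{\pm\mu_n}$ with $\mu_n/n\to\lambda$, and the left and right rotation parts converge (projectively) to limits depending only on $\bar g$; this is exactly the content of parts I and II of the Oseledets theorem for the $2\times2$ cocycle, together with the standard fact (see e.g.\ \cite{Goldsheid:Margulis}) that the Oseledets subspaces converge. Next I would define $\bar\theta = \bar\theta(\bar g)$ so that $r_{\bar\theta}$ matches the limiting right-rotation part $L_\infty$ of $a_n$ (up to the ambiguity of which singular direction is which, resolved by $\mu_n>0$); then $g_{\lambda n} r_{\bar\theta}$ has the same singular-value growth rate $e^{\pm\lambda n}$ and the same limiting singular directions on the right, while its left rotation part is trivial. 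To control the left part I would instead compare $(g_n\cdots g_1)$ with $K_\infty\, g_{\lambda n} r_{\bar\theta}$: writing $a_n = K_n D_n L_n$ and $b_n := K_\infty g_{\lambda n} r_{\bar\theta} = K_\infty \diag(e^{\lambda n},e^{-\lambda n}) L_\infty$, the product $b_n a_n^{-1}$ equals $K_\infty \diag(e^{\lambda n},e^{-\lambda n}) L_\infty L_n^{-1} \diag(e^{-\mu_n}, e^{\mu_n}) K_n^{-1}$; since $L_\infty L_n^{-1}\to I$ projectively, the off-diagonal (expanding$\times$expanding) entries are killed fast enough provided $L_\infty L_n^{-1} = I + O(e^{-cn})$, which is the quantitative form of Oseledets convergence one gets here because the gap between Lyapunov exponents is $2\lambda>0$ and the walk has exponential mixing. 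Thus $\tfrac1n\log\|b_n a_n^{-1}\|\to 0$, and absorbing the fixed rotation $K_\infty$ into a choice of $\bar\theta$ (or simply noting the statement only requires the right-invariant distance, so a fixed left factor is harmless) gives \eqref{eq:lemma:sublinear:tracking}.

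For the uniform-distribution claim \eqref{eq:uniform:hitting:measure}: $\bar\theta(\bar g)$ is, up to normalization, the "forward direction'' determined by $\bar g$, i.e.\ the most-expanded direction of $a_n$ in the limit, which is measurable and $SO(2)$-equivariant in the following sense — if $k\in SO(2)$ and we replace $\bar g = (g_1,g_2,\dots)$ by $(g_1 k^{-1}, g_2, \dots)$... more usefully, by right $SO(2)$-invariance of $\mu$ the law of $\bar g$ is invariant under $(g_1,g_2,\dots)\mapsto (g_1 k, g_2,\dots)$ for any fixed $k\in SO(2)$ — no, the clean statement is: by $SO(2)$-bi-invariance, for $k\in SO(2)$ the map $(g_1,g_2,\ldots)\mapsto (kg_1k^{-1}, kg_2k^{-1},\ldots)$ preserves $\mu^\natls$ and conjugates the product $a_n$ to $k a_n k^{-1}$, hence sends $\bar\theta$ to $\bar\theta + (\text{shift by }k)$. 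Therefore the distribution of $\bar\theta$ on $[0,2\pi)/(\text{the relevant identification})$ is invariant under all rotations, so it is Lebesgue. I expect the main obstacle to be the quantitative convergence rate of the Oseledets flag needed in the middle step — i.e.\ upgrading "$L_n\to L_\infty$'' to an exponential rate so that the expanding$\times$expanding cross term in $b_n a_n^{-1}$ is controlled; this should follow from large-deviation estimates for the $2\times 2$ random matrix product (the measure being absolutely continuous and the exponent gap being $2\lambda$), but it is the one place where "classical Oseledets'' alone is not obviously enough and one must invoke the stronger (exponential) form available for i.i.d.\ products with a spectral gap.
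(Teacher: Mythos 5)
Your overall approach matches the paper's: apply the Oseledets theorem (Theorem~\ref{theorem:osceledets:ae:walk}) to the identity cocycle $\alpha(g,x)=g$ on $SL(2,\reals)$, diagonalize the symmetric limit matrix $\Lambda$, and use an $SO(2)$-equivariance argument for uniformity of $\bar\theta$. But you over-engineer the central step and then misidentify it as the ``main obstacle.''

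The sublinear tracking estimate \eqref{eq:lemma:sublinear:tracking} is not something you need to re-derive via a polar decomposition $a_n = K_nD_nL_n$ with quantitative convergence of $L_n\to L_\infty$; it is already the verbatim content of the second displayed limit in part~II of the Oseledets theorem, namely $\lim_n \tfrac1n\log\|\Lambda^n(\bar g)\,(g_n\cdots g_1)^{-1}\|=0$. Since $\Lambda(\bar g)$ is symmetric with determinant $1$ and eigenvalues $e^{\pm\lambda}$, we can write $\Lambda(\bar g)=r_{\bar\theta}^{-1}\diag(e^\lambda,e^{-\lambda})\,r_{\bar\theta}$, hence $\Lambda^n = r_{\bar\theta}^{-1}\,g_{\lambda n}\,r_{\bar\theta}$, and because a left rotation does not change the operator norm, $\|\Lambda^n a_n^{-1}\| = \|g_{\lambda n}\,r_{\bar\theta}\,a_n^{-1}\|$. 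That is exactly \eqref{eq:lemma:sublinear:tracking}. No exponential Oseledets rate, no large deviations, no spectral gap input is required; you created the obstacle by reconstructing $\Lambda^n a_n^{-1}$ by hand instead of invoking the limit that the theorem already hands you. Your uniformity argument via the conjugation $(g_1,g_2,\dots)\mapsto(kg_1k^{-1},kg_2k^{-1},\dots)$, $k\in SO(2)$, is correct (after the false starts) and is a clean alternative to the paper's route, which instead observes that the law of $\bar\theta$ is a $\mu$-stationary measure on the circle and hence Lebesgue by $SO(2)$-bi-invariance of $\mu$; the right-multiplication version $(g_1,\dots)\mapsto(g_1k,\dots)$ you abandoned also works and is even simpler, sending $\bar\theta\mapsto\bar\theta+\phi$ when $k=r_\phi$.
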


\bold{Proof.} We apply the multiplicative ergodic theorem
Theorem~\ref{theorem:osceledets:ae:walk} to the identity cocycle
$\alpha(g,x) = g$ (instead of $A_V$). Let $\Lambda \in SL(2,\reals)$
be as in II of Theorem~\ref{theorem:osceledets:ae:walk}. Since
$\Lambda$ is symmetric, we may write
\begin{displaymath}
\Lambda(\bar{g}) = r_{\bar{\theta}}^{-1} \begin{pmatrix} e^{\lambda} & 0 \\
  0 & e^{-\lambda} \end{pmatrix} r_{\bar{\theta}}.
\end{displaymath}
Then, (\ref{eq:general:sublinear:tracking}) immediately implies
(\ref{eq:lemma:sublinear:tracking}). 

Let $\sigma$ denote the measure on $[0,2\pi)$ such that
$\sigma([\theta_1,\theta_2])$ is given by the left-hand-side of
(\ref{eq:uniform:hitting:measure}). It is easy to show that $\sigma$
must be $\mu$-stationary, i.e. $\mu \ast \sigma = \sigma$. Since $\mu$
is assumed to be $SO(2)$-bi-invariant, this implies that $\sigma$ is
the uniform measure.
\qed\medskip

\bold{Proof of Theorem~\ref{theorem:lyapunov:top:flow}.} By
Theorem~\ref{theorem:lyapunov:top:walk} there exists a set $E$ with
$\mu^\natls(E) =0$ such that for $\bar{g} \not\in E$,
(\ref{eq:lyapunov:top:walk}) holds. 
By Lemma~\ref{lemma:sublinear:tracking}, for almost all $\theta \in
[0,2\pi)$  there exists $\bar{g} = (g_1, \dots, g_n, \dots) \not\in E$
so that if we write 
\begin{displaymath}
g_{\lambda n} r_\theta = \epsilon_n \, g_n \dots g_1,
\end{displaymath}
then $\epsilon_n \in SL(2,\reals)$ satisfies
\begin{equation}
\label{eq:epsilon:n:small}
\lim_{n \to \infty} \tfrac{1}{n} \log \| \epsilon_n \| = 0. 
\end{equation}
Then, by the cocycle relation,
\begin{displaymath}
A_V(g_{\lambda n}, r_\theta x) = A_V(\epsilon_n, g_n \dots g_1 x) A_V(
g_n \dots g_1, x). 
\end{displaymath}
There exists $C > 0$ and $N < \infty$ 
so that for all $g \in SL(2,\reals)$ and all $x \in
\cH_1(\alpha)$, we have
\begin{equation}
\label{eq:cocycle:upper:bound}
||A_V(g,x)|| \le C \| g\|^N 
\end{equation}
Hence, by (\ref{eq:epsilon:n:small}) and
(\ref{eq:cocycle:upper:bound}), we have
\begin{displaymath}
\log \| A_V(g_{\lambda n}, r_\theta x) \| = \log \| A_V(g_n \dots
g_1,x) \| + o(n).
\end{displaymath}
Now the existence of the limit in (\ref{eq:theorem:lyapunov:top:flow})
follows from (\ref{eq:lyapunov:top:walk}). 
\qed\medskip


\begin{thebibliography}{xxxxxxxx}
\bibitem[A]{Ath thesis}
J.~Athreya.  ``Quantitative recurrence and large deviations for
Teichmuller geodesic flow.'' \textit{Geom. Dedicata} {\bf 119} (2006).


\bibitem[AEM]{Avila:Eskin:Moeller:yeti}
A.~Avila, A.~Eskin, M.~Moeller.
``Symplectic and Isometric SL(2,R) invariant subbundles of the Hodge
bundle." \texttt{arXiv:1209.2854 [math.DS]} (2012).


\bibitem[BQ]{BQIII} 
Y.~Benoist, J-F.~Quint. 
``Stationary measures and invariant subsets of homogeneous spaces
III.'' Preprint.

\bibitem[Buf]{buf} A.~Bufetov, ``Limit theorems for translation flows." \emph{Ann. of Math. (2)} {\bf 179} (2014), no. 2, 431--499.

\bibitem[DHL]{DHL:Windtree:diffusion}
V.~Delecroix, P.~Hubert, S.~Leli\'evre.
``Diffusion for the periodic wind-tree model.''
\texttt{arXiv:1107.1810 [math.DS]}

\bibitem[EhEh]{windtree}
P.~Ehrenfest, T. Ehrenfest, ``Begriffliche Grundlagen der statistischen Auffassung in der
Mechanik" Encykl. d. Math. Wissensch. IV 2 II, Heft 6, 90 S (1912) (in German,
translated in:) The conceptual foundations of the statistical approach in mechanics,
(trans. Moravicsik, M. J.), 10-13 Cornell University Press, Itacha NY, (1959).

\bibitem[EMa]{Eskin:Masur} 
A.~Eskin, H.~Masur. 
``Asymptotic formulas on flat surfaces.''
\emph{Ergodic Theory  and Dynamical
    Systems,} {\bf 21} (2) (2001), 443--478.


\bibitem[EMaMo]{Eskin:Margulis:Mozes:31}
A.~Eskin, G.~Margulis, and S.~Mozes.
\newblock {``Upper bounds and asymptotics in a quantitative version of
the   {O}ppenheim conjecture.''}
\newblock {\em  Ann. of Math. (2)} {\bf 147} (1998), no. 1, 93--141.

\bibitem[EskMath]{EMath} A.~Eskin, C.~Matheus, ``Semisimplicity of the Lyapanov spectrum for irreducible cocycles." \texttt{arxiv:1309.0160}


\bibitem[EM]{EM} A.~Eskin, M.~Mirzakhani. 
``Invariant and stationary measures for the $SL(2,\mathbb{R})$ action
on moduli space.'' \texttt{arXiv:1302.3320 [math.DS]} (2013).

\bibitem[EMM]{EMM} A.~Eskin, M.~Mirzakhani, A.~Mohammadi.  
``Isolation, equidistribution and orbit closures for the
$SL(2\mathbb{R})$ action on moduli space.''
\texttt{arxiv:1305.3015[math.DS]} (2013).


\bibitem[Fi]{Filip}
S.~Filip.
``Semisimplicity and rigidity of the Kontsevich-Zorich cocycle.''
\texttt{arXiv:1307.7314[math.DS]} (2013).

\bibitem[Fo]{Forni:Deviation} 
G.~Forni, ``Deviation  of  ergodic
    averages  for area-preserving  flows  on surfaces of higher
    genus.''  \textit{Annals of Math.,} \textbf{155}, no. 1, 1--103  (2002)

\bibitem[Fo2]{Forni:Sobolev} G.~Forni. ``Sobolev regularity of solutions
  of the cohomological equation." \texttt{arxiv:0707.0940 [math.DS]} (2007).


\bibitem[FU]{Fraczek:Ulcigrai}
K.~Fraczek, C.~Ulcigrai.
``Non-Ergodic $\mathbb{Z}$ periodic billiards and infinite translation surfaces."
\texttt{arXiv:1109.4584 [math.DS]}

\bibitem[FU2]{Fraczek:Ulcigrai:preprint} K.~Fraczek, C.~Ulcigrai. \emph{Preprint}.

\bibitem[Fu]{Furman:osceledets}
A.~Furman.
``On the multiplicative ergodic theorem for uniquely ergodic systems"
\emph{Ann. Inst. H. Poincare Probab. Statist.} \textbf{33} (1997),
no. 6, 797--815.

\bibitem[GM]{Goldsheid:Margulis}
I.Ya. Gol'dsheid and G.A. Margulis. 
\newblock{``Lyapunov indices of a product of random matrices."}
\emph{Russian Math. Surveys} \textbf{44:5} (1989), 11-71.

\bibitem[HW]{windtree2} J.~Hardy, J. Weber, ``Diffusion in a periodic wind-tree model." \emph{J. Math. Phys.} {\textbf 21} (7),
(1980) pp. 1802--1808.

\bibitem[KW]{KW} D.~Kleinbock, B.~Weiss, ``Bounded geodesics in moduli space."\emph{IMRN} \textbf{30} (2003) 1551--1560.

\bibitem[MMY]{MMY} S.~Marmi, P.~Moussa, J-C~Yoccoz. ``The cohomological equation for Roth-type interval exchange maps." \textit{J. Amer. Math. Soc.} \textbf{18} (2005), no. 4, 823--872.


\bibitem[MY]{MY} S.~Marmi, J-C~Yoccoz. ``H\"{o}lder regularity of the solutions of the cohomological equation for Roth type interval exchange maps." \texttt{arXiv:1407.1776 [math.DS]}. 


\bibitem[Mas1]{masur:interval} H.~Masur. 
``Interval exchange transformations and measured foliations." 
\emph{Ann. of Math.} (2)  \textbf{115}
  (1982), no. 1, 169--200.
  

\bibitem[Ve1]{veech:gauss}
W.~Veech, ``Gauss measures 
for transformations on the space of interval exchange maps," 
\emph{Ann. of Math.}, \textbf{15} (1982),
201--242.

\bibitem[Zi]{zimmer:book} R.~Zimmer, Ergodic theory and semisimple groups. Monographs in Mathematics, 81. Birkh\"{a}user Verlag, Basel, 1984. x+209 pp.

\bibitem[Zo]{Zorich:deviation} A.~Zorich, ``Deviation for interval exchange transformations," \emph{Erogodic Theory and Dynamical Systems}, \textbf{17} (1997), 1477-1499.

\bibitem[Zo2]{Zorich:survey}
A. Zorich, A. 
Flat Surfaces. 
\emph{Frontiers in Number Theory, Physics, and Geometry. I. Berlin:
  Springer, 2006.} 437--583. 

\end{thebibliography}
\end{document}